\documentclass[12pt,centertags,oneside]{amsart}
\usepackage{amsmath,amstext,amsthm,amscd,typearea,hyperref,mathtools}
\usepackage{amssymb}
\usepackage{a4wide}
\usepackage[mathscr]{eucal}
\usepackage{mathrsfs}
\usepackage{typearea}
\usepackage{charter}
\usepackage{pdfsync}
\usepackage[a4paper,width=16.2cm,top=3cm,bottom=3cm]{geometry}

\numberwithin{equation}{section}

\usepackage{xcolor}

\usepackage{tikz}

\newtheorem{theorem}{Theorem}[section]
\newtheorem{proposition}[theorem]{Proposition}
\newtheorem{corollary}[theorem]{Corollary}
\newtheorem{lemma}[theorem]{Lemma}
\newtheorem{remark}[theorem]{Remark}

\theoremstyle{definition}
\newtheorem{definition}[theorem]{Definition}

\newtheorem*{theorem*}{Theorem}

\newcommand{\cali}[1]{\mathscr{#1}}

\newcommand{\Leb}{{\rm Leb}}

\newcommand{\supp}{{\rm supp }\,}

\newcommand{\Lip}{{\rm Lip}}
\newcommand{\vect}{{\rm Vect }\,}

\newcommand{\Grm}{{\rm G}}
\newcommand{\Brm}{{\rm B}}

\newcommand{\Crm}{{\rm C}}

\newcommand{\dist}{\mathop{\mathrm{dist}}\nolimits}

\newcommand{\Area}{\mathop{\mathrm{Area}}\nolimits}

\newcommand{\ddc}{{\rm dd}^c}

\newcommand{\omegaFS}{ \omega_{\mathrm{FS}}}
\newcommand{\Tan}{{\rm Tan}}

\def\u{\operatorname{u}}
\def\s{\operatorname{s}}

\usepackage{mathtools}
\DeclarePairedDelimiter\ceil{\lceil}{\rceil}

\newcommand{\Ac}{\cali{A}}
\newcommand{\Bc}{\cali{B}}
\newcommand{\Cc}{\cali{C}}

\newcommand{\Ec}{\cali{E}}

\newcommand{\Gc}{\cali{G}}

\newcommand{\Lc}{\cali{L}}
\newcommand{\Nc}{\cali{N}}

\newcommand{\Qc}{\cali{Q}}

\newcommand{\Sc}{\cali{S}}

\newcommand{\B}{\mathbb{B}}
\newcommand{\D}{\mathbb{D}}
\newcommand{\C}{\mathbb{C}}

\newcommand{\Z}{\mathbb{Z}}
\newcommand{\R}{\mathbb{R}}

\renewcommand\P{\mathbb{P}}

\newcommand{\W}{\mathbb{W}}

\newcommand{\T}{\mathbb{T}}

\newcommand{\diam}{{\rm diam}}

\date{ \today}

\title[ ]{Quantitative Dynamics of complex Hénon maps}

\begin{author}[H. de Th\'elin]{Henry de Th\'elin}
\address{Universit\'e Paris 13, Sorbonne Paris Nord, LAGA, CNRS (UMR 7539),
F-93430, Villetaneuse, France. }
\email{dethelin@math.univ-paris13.fr}
\end{author}

\begin{author}[T.-C. Dinh]{Tien-Cuong Dinh}
\address{National University of Singapore, Lower Kent Ridge Road 10,
Singapore 119076, Singapore}
\email{matdtc$@$nus.edu.sg }
\end{author}

\begin{author}{Lucas Kaufmann}
\address{Université d’Orléans, Université de Tours, CNRS, IDP UMR 7013, 45067 Orléans, France.}
\email{lucas.kaufmann@univ-orleans.fr}
\end{author}

\keywords{Hénon maps,  saddle periodic points, equilibrium measure, Julia set, Pesin theory, exponential equidistribution}

\allowdisplaybreaks
\emergencystretch=\maxdimen
\begin{document}

\begin{abstract}
Let $f$ be a Hénon map of $\C^2$.  We provide  quantitative versions of several results involving dynamical objects associated to $f$.  Our results can be interpreted as a quantitative version of Pesin theory with geometric control. As an application we show that the perdiodic points of period $n$ of $f$ equidistribute towards its equilibrium measure exponentially fast as $n$ tends to infinity
\end{abstract}

\maketitle

\tableofcontents

\section{Introduction}

Hénon maps form a  widely studied class of invertible dynamical systems in dimension two.  The dynamical theory of such maps is highly developed, thanks to many deep results obtained by numerous authors.  Rather than reviewing the complete literature,   we refer to \cite{henon:survey} and the references therein for a historical overview. For complex Hénon maps,  a fundamental result of Bedford-Lyubich-Smillie \cite{bedford-lyubich-smillie} asserts that the periodic points of $f$ equidistribute towards its unique measure of maximal entropy. This result  can be viewed as the culmination of a series of previous works and the proof combines pluripotential theory and Pesin theory.   

The main goal of the present work is to furnish quantitative versions of many of the known results in the dynamical theory of complex Hénon maps.  Broadly speaking,  our goal is to improve the standard results by promoting the usual small quantities by \textit{exponentially small} ones. This include the  laminarity of Green currents,  their geometric intersection and graph transforms.  Our results can be viewed as a quantitative version of Pesin theory.  As an application,  we are able to quantify the above mentioned equidistribution result of Bedford-Lyubich-Smillie.  See Theorem \ref{thm:periodic-points} below for a more precise statement.

\begin{theorem*}
Let $f: \C^2 \to \C^2$ be a Hénon map of degree $d \geq 2$.    Denote by $\mu$ its measure of maximal entropy.  Then, the saddle periodic points of period $n$ of $f$ equidistribute towards $\mu$ exponentially fast as $n$ tends to infinity.
\end{theorem*}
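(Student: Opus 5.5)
Throughout, write $\mathrm{SPer}_n$ for the set of saddle periodic points of period $n$ and $d^n$ for the normalization. The plan is to follow the Bedford--Lyubich--Smillie strategy, keeping every error term explicit and exponentially small. Concretely, for a test function $\varphi$ of class $\Cc^2$ (or Hölder) with compact support near the Julia set, I aim to show
\[
\Big|d^{-n}\sum_{p\in \mathrm{SPer}_n}\varphi(p)-\int\varphi\,d\mu\Big|\le C\|\varphi\|_{\Cc^2}\,\lambda^{n}
\quad\text{for some }\lambda<1 .
\]

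The first step is to cover the support of $\mu=T^+\wedge T^-$ by finitely many bidisks $B$ of size $r$, with $r$ to be chosen as a small power $e^{-\epsilon n}$. In each bidisk I use quantitative laminarity of the Green currents. Up to an error of mass $O(r^{\alpha})$, the current $T^+|_B$ is an average of integration currents on vertical-like disks, and $T^-|_B$ is an average over horizontal-like disks, with uniform geometric control: the disks are graphs with bounded slope, and the number of ``bad'' pieces is controlled. This is the quantitative Pesin/laminarity input, which I assume is established in the body of the paper.

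In the second step I replace the laminar pieces by iterates. For $m\approx n/2$, the current $d^{-m}f^{-m}_*[\text{horizontal disk}]$ approximates $T^+$ (and symmetrically $d^{-m}f^m_*$ approximates $T^-$). Pluripotential estimates for the Green function, namely $|d^{-m}\log^+|f^m|-G^\pm|=O(d^{-m})$, give exponential convergence in the sense of currents tested against $\Cc^2$ forms.

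In the third step I count fixed points of $f^n$ in $B$ as transverse intersections. Each vertical piece of $f^{-m}$ of a horizontal disk that meets a horizontal piece of $f^{m}$ of a vertical disk, with both pieces crossing $B$ properly, produces, by a quantitative graph transform or contraction argument, exactly one saddle point of period $n$ in $B$. Its geometric intersection number is $1$. Summing over $B$ and comparing with $\int_B T^+\wedge T^-$ then gives the lower bound $\ge \int\varphi\,d\mu - O(\lambda^n)$. The upper bound comes from the total count, $\#\mathrm{Fix}(f^n)=d^n$, together with the complement: the mass missed by good pieces is exponentially small, so there is no room for extra points, since any positive $\varphi$ forces the deficit elsewhere to be small.

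The main obstacle is the third step: obtaining geometric intersection in place of cohomological intersection with exponentially small loss. Concretely, this requires showing that all but an exponentially small proportion, in $T^\pm$ mass, of the local pieces of $f^{\pm m}$ are properly stretched graphs over bidisks of size $e^{-\epsilon n}$. This calls for a quantitative Pesin theory: exponentially small measure of points whose Pesin charts are smaller than $e^{-\epsilon n}$, which I would get from large deviation or exponential mixing estimates for $\mu$ applied to $\log\|Df\|$. I would try first to derive this from exponential mixing of $\mu$ combined with Hölder continuity of the Green functions.
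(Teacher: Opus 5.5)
Your outer frame matches the paper's. It has three parts: quantitative laminarity of $T^\pm$ at an exponentially small scale, a quantitative graph transform giving exactly one saddle point per good crossing, and an upper bound from the lower bound on each box together with $\#\mathrm{SPer}_n\le d^n$. The gap is in your third step, where the actual counting happens. A point $z\in B$ on a piece of $f^{-m}(\Delta_1)$ and on a piece of $f^{m}(\Delta_2)$ only satisfies $f^{m}(z)\in\Delta_1$ and $f^{-m}(z)\in\Delta_2$. Nothing closes the orbit.

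There are two ways to read your step, and neither works. If $\Delta_1,\Delta_2$ are global curves, the count $\approx d^{2m}\int_B T^+\wedge T^-=d^n\mu(B)$ is plausible, and this is the setting where your estimate $|d^{-m}\log^+\|f^m\|-G^\pm|=O(d^{-m})$ applies. But these points are not periodic. If instead $\Delta_1,\Delta_2$ are a stable and an unstable plaque inside $B$, a good intersection does give a crossing and a periodic point. However, you have forced the orbit to visit $B$ at times $-m$, $0$ and $m$. Since $d^{-m}[f^{-m}\Delta_1]\to(\int_{\Delta_1}T^-)\,T^+$, $d^{-m}[f^{m}\Delta_2]\to(\int_{\Delta_2}T^+)\,T^-$, and $\mu(B)\approx\int_{\Delta_1}T^-\cdot\int_{\Delta_2}T^+$ by the product structure, you count about $d^n\mu(B)^2$ points rather than $d^n\mu(B)$. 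The lower bound then fails.

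The correct bookkeeping, used in the paper, passes through an intermediate box at the splitting time $n'\approx n/2$:
\[
\#\mathrm{SPer}_n(\Sc)\ \ge\ \sum_{\Sc'} b_{\Sc,\Sc'}(n')\,b_{\Sc',\Sc}(n-n'),
\]
where $b_{\Sc,\Sc'}(n')$ counts good components of $f^{-n'}(\Sc')\cap\Sc$. A Bedford--Lyubich--Smillie type product formula bounds each $b$ below in terms of $\mu(f^{-n'}(\Sc)\cap\Sc')$: each good component carries $\widehat\mu_r$-mass exactly $d^{-n'}\int_{\Delta^u}\T^+_r\cdot\int_{\Delta^s}\mathbf T^-_{r,-n'}$. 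One then needs $\mu(f^{-n'}(\Sc)\cap\Sc')\ge\mu(\Sc)\mu(\Sc')-e_{\Sc,\Sc'}$ with $\sum_{\Sc,\Sc'}e_{\Sc,\Sc'}\lesssim r^\gamma$ over all pairs of boxes. This is Dinh's exponential mixing of $\mu$: a correlation between where a point sits at time $0$ and at time $n'$. Separate equidistribution of $d^{-m}(f^{\pm m})^*[L]$ for fixed curves carries no such information.

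You could try to repair your scheme with plaques inside the box, summing their intersections over all intermediate boxes. That would need exponential-speed equidistribution of the tiny plaques $d^{-m}[f^m\Delta^u]$, with explicit dependence on their size, which is essentially the content of exponential mixing. Three further points would also have to be addressed.

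\textbf{Regularity of test functions.} The paper makes every crossing full and the product formula exact by using boxes whose horizontal boundaries are stable plaques. These come from straightening the lamination by a holomorphic motion, which is only $\theta_-$-Hölder. So the box cut-offs are only $\Cc^{\theta_-}$, and the paper stresses that $\Cc^1$ observables are not enough there. Your $\Cc^2$ convergence of currents therefore does not apply, and round bidisks lose the needed saturation.

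\textbf{Wedge products at finite time.} Convergence of each finite-time current separately against $\Cc^2$ forms does not control their geometric intersection on a box of size $e^{-\epsilon n}$. Wedge products are not weakly continuous and $T^\pm$ are not smooth. The paper avoids finite $m$ altogether: it lets $m\to\infty$ to get laminar currents $T^\pm_{r,\bullet}\le T^\pm$, and bounds $\mu-T^+_r\dot\wedge T^-_r$ using Hölder quasi-potentials.

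\textbf{Source of quantitative Pesin theory.} Large deviations for $\log\|Df\|$ are neither needed nor easy here: the cocycle is only subadditive, and such bounds would not give graph structure for pieces of $f^{\pm m}(L)$. The paper gets this geometrically. It discards components $\Delta$ of $f^{-m}(L)$ over cells of size $2r$ that are not graphs or have $\mathrm{Area}(f^j(\Delta))>d^{-j/2}$ for some $j$; a counting argument bounds their projected mass by $O(r^2)$. Cauchy estimates over blocks of length $p=\lceil 6\alpha n\rceil$ then give the rates $d^{-\alpha n/4}$ and angles $>\rho^7$.
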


\medskip

\textbf{History and non-AI usage.}
This project began in autumn 2025 as a continuation our previous collaboration \cite{ddk:periodic-Pk}.  Most of the mathematical work was finished by March 2026, and since then, the writing process has been taking place intermittently.    No AI was used whatsoever in this process.  In light of the current surge in AI-generated or AI-assisted proofs, we have decided to release the current version in its present form to avoid any claims that could question the originality and relevance of our work. A final version will be updated soon.

\medskip

\textbf{Convention.} Throughout this article,  $C, C_1, C_2$, etc will denote a positive constant whose precise value is irrelevant and may change from line to line.  The dependence of these constants on certain parameters (or
lack thereof), if not explicitly stated, will be clear from the context. The important thing is that they  are all independent of the relevant dynamical and geometric parameters ($n,  r$, etc.).

\section{Green currents and the equilibrium measure}

 Let $f: \C^2 \to \C^2$ be a Hénon map of degree $d \geq 2$.  We also denote by $f: \P^2  \dashrightarrow \P^2$ its extension as a birational map to $\P^2$.  Denote by $I^+$ (resp.  $I^-$) the indeterminacy set of $f$ (resp. $f^{-1}$).  Up to an affine change of coordinates of $\C^2$,  we will assume that $I^+ = [1:2:0]$ and $I^- = [2:1:0]$.  Notice that the two families of lines parallel to the coordinate axis in $\C^2$ extend to two pencil of lines in $\P^2$ going through $a^+ := [0:1:0]$ and $a^-:= [1:0:0]$ respectively.

The Green functions are defined by
$$
G^+(z) = \lim_{n \to \infty} \frac{1}{d^n} \log^+ \|f^n(z)\| \quad \text{and} \quad
G^-(z) = \lim_{n \to \infty} \frac{1}{d^n} \log^+ \|f^{-n}(z)\|,
$$

The stable current is by definition $T^+ := \ddc G^+$. If $L \subset \P^2$ is a projective line avoiding $I^+$, then $$\lim_{m \to \infty} \frac{1}{d^m} (f^m)^*[L] = T^+.$$

Similarly, one defines the unstable current by $T^- := \ddc G^-$. If $L \subset \P^2$ is a projective line avoiding $I^-$, then $$\lim_{m \to \infty} \frac{1}{d^m} (f^m)_*[L] = T^-.$$

The equilibrium measure is the probability measure defined by
$$\mu: T^+ \wedge T^-.$$

This is a canonical probability measure that is invariant under $f$.  We will use in a crucial way the laminar structure of the currents and, in particular,  the geometric nature of their intersection. We refer to \cite{dujardin:laminar-P2} and \cite{dujardin:intersection} for the  basic definitions and some fundamental results in this topic.

\section{Dynamical currents} \label{sec:geometric-int}

Our results will be based on the construction of several currents of dynamical nature  with nice geometry and good compatibility with the hyperbolic nature of the dynamical system induced by $f$. We start by defining a suitable partition of the dynamical plane and by introducing some key parameters.

\begin{definition} \label{def:manhattan}
We identify $\C$ with $\R^2$ and $\C^2$ with $\R^4$.  Denote by $\W$ (resp.  $\W^2$) the open square $(-1,1)^2$ in $\R^2$ (resp.  the open cube $(-1,1)^2$ in $\R^4$).  Fix $0<r<1$ and let $\tau = (\tau_1, \tau_2) \in \R^2 \times \R^2$.   For $\eta = (\eta_1,\eta_2) \in \Z^2 \times \Z^2$ we will denote
$$\W_{r,\eta_i}:=r\W + \tau_i + 2r \eta_i  \quad \text{for} \quad i=1,2$$
and 
$$\W_{r,\eta}^2:=  \W_{r,\eta_1} \times \W_{r,\eta_2} = r\W^2 + \tau + 2r \eta.$$ 

We will call the $\W_{r,\eta_i}$ squares and the $\W_{r,\eta}^2$ cubes or cells.  Observe that the union of the closed cells $\overline{\W}_{r,\eta}^2$ for  $ \eta\in\Z^4$ covers $\C^2$.  The point $\tau + 2r \eta$ is called the center of the cell.  We omit $\tau$ from our notation for simplicity.

For $0<\lambda<1$,  we denote by $\W_{r,\eta}^2$ the rescaling of $\W_{r,\eta}^2$ by the factor $\lambda$,  that is $$\lambda \W_{r,\eta}^2:= \lambda r\W^2 + \tau + 2r \eta$$
and we define the \emph{street network} $$\lambda \mathbb S_r := \bigcup_{\eta \in \Z^2} \W_{r,\eta}^2 \setminus \bigcup_{\eta \in \Z^2}  \lambda  \W_{r,\eta}^2.$$
\end{definition}

Let $0<\gamma^\pm \leq 1$ be the Hölder exponents of the Green functions $G^\pm$.  We fix a constant $$0 < \gamma < \frac13 \max\{\gamma^+,\gamma^-\}.$$

We fix a large $R>0$ such that $J^* \subset \D(0;R)^2$.  We will denote 
\begin{equation}
P_r : = \bigcup_{\W_{r,\eta}^2 \cap \D(0;R)^2 \neq \varnothing } \W_{r,\eta}^2 \quad \text{and} \quad (1-(2r)^\gamma) P_r : = \bigcup_{\W_{r,\eta}^2 \cap \D(0;R)^2 \neq \varnothing } (1-(2r)^\gamma) \W_{r,\eta}^2 
\end{equation} and set
\begin{equation} \label{eq:kappa-def}
    \kappa:= \log_d \max_{x \in \overline{\D(0;R)^2}} \|Df(x)\|.
\end{equation}
By taking a larger $R$ if necessary we may assume that $\kappa >1$.

\begin{lemma}[Dujardin] \label{lemma:dujardin-fubini}
Denote by $\sigma_{T^\pm}=T^\pm \wedge \omegaFS$ the trace measure of $T^\pm$. There exists a constant $c>0$ such that $$\big( \mu + \sigma^+ + \sigma^- \big)((1-(2r)^\gamma) P_r) \leq c r^\gamma.$$ 
\end{lemma}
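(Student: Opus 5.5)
The plan is to prove the statement for a suitable choice of the translation parameter $\tau$, which the notation suppresses, using an averaging (Fubini) argument in $\tau$. This is the argument behind the attribution to Dujardin. I read the set to be estimated as the part of $P_r$ lying in the street network, i.e.\ $P_r \cap (1-(2r)^\gamma)\mathbb S_r$, the union over the relevant cells of $\W^2_{r,\eta}\setminus(1-(2r)^\gamma)\W^2_{r,\eta}$. This reading is forced: the shrunken cubes themselves carry almost all the mass of $\mu$, so no bound of order $r^\gamma$ can hold for them. With this reading the bound $c r^\gamma$ is the natural one, and I will prove it in that form.

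\textbf{Step 1: a pointwise probability bound.} Let $\nu := \mu + \sigma^+ + \sigma^-$ restricted to a fixed neighbourhood $K$ of $\overline{\D(0;R+2)^2}$. Since $\mu$ is a probability measure and $\sigma^\pm$ are locally finite, $\nu(K)<\infty$; the only relevant cells have size $r<1$ and meet $\D(0;R)^2$, so they lie in $K$. Let $\tau$ be uniformly distributed in the fundamental domain $[0,2r)^4$ for the lattice of translations $2r\Z^4$. Fix $x\in\C^2=\R^4$.
\begin{itemize}
\item The grid is $2r\Z^4$-periodic in $\tau$, and along each real coordinate the shrunken cube has side $2r(1-(2r)^\gamma)$ inside a cell of side $2r$.
\item Hence, for each of the four real coordinates, the probability that the corresponding coordinate of $x$ falls in the one-dimensional ``street'' is exactly $(2r)^\gamma$.
\item By a union bound,
$$\mathbb P_\tau\big(x\in (1-(2r)^\gamma)\mathbb S_r\big)\le 1-(1-(2r)^\gamma)^4\le 4(2r)^\gamma .$$
\end{itemize}

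\textbf{Step 2: Fubini.} Integrating the pointwise bound against $\nu$ and exchanging the integrals gives
$$\frac{1}{(2r)^4}\int_{[0,2r)^4}\nu\big(P_r\cap(1-(2r)^\gamma)\mathbb S_r\big)\,d\tau=\int \mathbb P_\tau\big(x\in \text{streets}\big)\,d\nu(x)\le 4\cdot 2^\gamma\,\nu(K)\,r^\gamma .$$
The integrand on the left is bounded by $\nu(K)$ and is a measurable function of $\tau$ (it is an integral over $x$ of an indicator that is jointly measurable in $(x,\tau)$), so the averaging is legitimate. Consequently there exist translation parameters $\tau$, in fact a set of $\tau$ of proportion at least $1/2$ by Markov's inequality, for which
$$\nu\big(P_r\cap(1-(2r)^\gamma)\mathbb S_r\big)\le 8\cdot 2^\gamma\nu(K)\, r^\gamma .$$
We fix such a $\tau$, and the lemma holds with $c=8\cdot 2^\gamma\nu(K)$, independent of $r$.

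\textbf{Main obstacle.} The delicate point is not the estimate but the bookkeeping of $\tau$. One must check that the constant is uniform in $r$, which it is because $\nu(K)$ does not depend on $r$. Later arguments will impose other generic conditions on $\tau$, so the good set of $\tau$ should have positive proportion; the Markov bound provides this, and such conditions can be intersected. A deterministic alternative would be to use the Hölder continuity of $G^\pm$ to bound the mass of thin slabs directly. That route gives worse exponents and is unnecessary here, so I would use the averaging argument.
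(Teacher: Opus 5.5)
Your argument is correct. It is essentially the Fubini argument of averaging over translations of the grid, which the paper attributes to Dujardin without reproducing. You are also right that the set has to be read as the street part $P_r\setminus(1-(2r)^\gamma)P_r$ (the way the lemma is later used), and that the estimate holds only for a suitably generic translation $\tau$, chosen depending on $r$.

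One minor point. To intersect this condition with the other generic conditions on $\tau$ needed later, apply Markov with a larger constant, so that the good set of $\tau$ has proportion close to $1$ rather than $1/2$.
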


Some of the steps of our construction work on a scale $r$ independent of $n$, but some of them require $r$ to shrink exponentially with $n$. For simplicity we will make this assumption from the beginning and we will omit the dependence on $n$ from the notation.

Let $0 < \alpha < 1$ be a small constant and $\kappa >1$ be as in \eqref{eq:kappa-def}.  We set
\begin{equation*}
    r:= d^{-\alpha n} \quad \text{and} \quad \rho:= r^\kappa.
\end{equation*}

We fix a large $n \geq 1$. Our construction of cells, currents, Pesin charts etc. will be adapted to $f^n$ and intermediate iterates.   Ultimately,  this will produce in each dynamical box a (single) fixed point of $f^n$ of saddle type and in the support of $\mu$.   We will use the fact that $n$ is large in order to absorb some universal constants.

\medskip

\textbf{Scales and projections.} 
Our arguments will involve three distinct scales, all exponentially small. In the first step of the construction, we work at the scale $r = d^{-\alpha n}$, using the two canonical projections of $\C^2$.  In a second step, after passing to a finer scale $\rho^{25}$, we show that the stable and unstable discs obtained are graphs with respect to a single projection $\pi_D : \C^2 \to D$, for a suitable line $D$, together with a good control on their slopes.  In the final part of the argument, where a graph transform is used to produce periodic points as intersections of stable and unstable discs, we will work inside bidiscs of size $\rho^{60}$ by $\rho^{100}$.

\medskip

We introduce the following non-negative integers    $p$, $\ell$ and $0 \leq s \leq p$
\begin{equation}
    p:= \ceil{6 \alpha n} \quad \text{and} \quad  n= (\ell +1) p  + s.
\end{equation}

We will construct currents $$T^\pm_r:= T^\pm_{r,0}, \quad T^\pm_{r,-p},\, \ldots\, , T^\pm_{r,-\ell p}, \, T^\pm_{r,-n} \quad \text{and} \quad T^\pm_{r,p}$$ with the following properties:
\smallskip

\begin{enumerate}
    \item[(P1)] The $T^\pm_{r,\bullet}$ are positive  currents of bi-degree $(1,1)$ and $T^\pm_{r,\bullet}\leq T^\pm$
    
    \item[(P2)] The currents $T^\pm_{r,\bullet}$ are uniformly laminar over the union of  $(1-(2r)^\gamma) \W_{2r,\eta}^2$,  laminated by holomorphic discs (called \textit{plaques}) that are graphs over $\W_{2r,\eta_1}$ or $\W_{2r,\eta_2}$.
    
    \item [(P3)] The geometric intersections $\overline \mu_{r,\bullet}:= T^+_{r,\bullet} \dot{\wedge} T^-_{r,\bullet}$ are dominated by $\mu$ and $\mu - \overline \mu_{r,\bullet}$ is of mass at most $C r^{\gamma}$. 
    
       \item[(P4)] The plaques of $T^-_{r,\bullet}$ are compatible in the following sense: for $j=-1,\ldots,\ell$,  if $\Delta^{\u}_{-jp}$ is a plaque of $T^-_{r,-jp}$ and $\Delta^{\u}_{-jp+p}$ is a plaque of $T^-_{r,-jp+p}$  then $f^p(\Delta^{\u}_{-jp})$  is either disjoint from $\Delta^{\u}_{-jp+p}$  or they meet along a common open set.   An analogous property holds for the plaques of $T^-_{r,\-n}$ and $T^-_{r,-\ell p}$ under the action of $f^{p+s}$.
    
   \item[(P5)]  Let $\Delta^{\s}$ be a plaque of $ T^+_{r,-jp}$ for some $j=-1, 0,1,\ldots, \ell$.  By (P2),  it is a graph over $\W_{2r,\eta_i}$ for $i=1$ or $2$.   Denote by $\widetilde{\Delta^{\s}} \subset \Delta^{\s}$ the corresponding graph over $\W_{r,\eta_i}$.  If $x \in \widetilde{\Delta^{\s}}$ and $v^{\s}_x$ is a unit vector tangent to $\widetilde{\Delta^{\s}}$ at $x$, then 
   \begin{equation} \label{eq:stable-estimate}
   \|D f^p(x)v^{\s}_x\|  \leq   d^{-\alpha n/4} . 
   \end{equation}
    and similarly for $ T^+_{r,-n}$ by replacing $p$ by $p+s$. 
   \item[(P6)]  Let $\Delta^{\u}$ be a plaque of $ T^-_{r,-jp}$ for some $j=-1, 0,1,\ldots, \ell$.  By (P2),  it is a graph over $\W_{2r,\eta_i}$ for $i=1$ or $2$.   Denote by $\widetilde{\Delta^{\u}} \subset \Delta^{\u}$ the corresponding graph over $\W_{r,\eta_i}$.  If $x \in \widetilde{\Delta^{\u}}$ and $v^{\u}_x$ is a unit vector tangent to $\widetilde{\Delta^{\u}}$ at $x$, then 
   \begin{equation} \label{eq:unstable-estimate}
   \| D f^{-p}(x)v^{\u}_x\|  \leq  d^{-\alpha n/4}. 
   \end{equation}
     and similarly for $ T^-_{r,-n}$ by replacing $p$ by $p+s$.

    \item[(P7)]Denoting by $\overline \Bc_\bullet:= \supp \overline \mu_{r,\bullet}$ and $$\overline{\Bc}^*:=   f^{-p}(\overline \Bc_{p}) \cap \overline \Bc_0 \cap f^p(\overline \Bc_{-p}) \cap \cdots \cap f^{\ell p}(\overline \Bc_{-\ell p}) \cap f^n(\overline \Bc_{-n}),$$
    we have that $\mu_r(\overline{\Bc}^*) \geq 1 - Cr^\gamma$ for some constant $C>0$.  Here $\mu_r = \mu_{r,0}$.
    
   \item[(P8)] If $x \in \Bc^*$ and $\Delta^{\u}_{-jp}$ (resp.~ $\Delta^{\s}_{-jp}$),  $j=-1,0,1,\ldots,\ell$ is the plaque of $T^-_{r,-jp}$ (resp.~ $T^+_{r,-jp}$)  through $x$,  then  $\measuredangle (\Delta^{\s}_{-jp},\Delta^{\u}_{-jp}) > \rho^{7}$.
  \end{enumerate}

\medskip
In a second step, we will refine the above construction to obtain $\T^\pm_{r,\bullet}$ currents with the following properties:
\smallskip

\begin{enumerate}
    \item[(P9)] There exists a line $D \subset \C^2$,  an orthogonal projection $\pi_D: \C^2 \to D$ and uniformly laminar currents $\T^\pm_{r,\bullet} \leq T^\pm_{r,\bullet}$ such that the plaques of $\mathbb T^\pm_{r,\bullet}$ are holomorphic graphs relative to $\pi_D$ above squares of side $\rho^{25}$ inside $D$ whose slope is bounded from above by $1/\rho^{10}$. 
    
    \item[(P10)] Let $\mu_{r,\bullet}:= \T^+_{r,\bullet} \dot{\wedge} \T^-_{r,\bullet}$. Then,  Property (P3) holds for $\mu_{r,\bullet}$ instead of $\overline \mu_{r,\bullet}$.
    
   \item[(P11)] The estimates in Property  (P7)  hold after replacing $\overline \mu_{r,\bullet}$ by $\mu_{r,\bullet}$.
    
\end{enumerate}

\begin{remark} \label{rmk:inherit}
Observe that,  since $\T^\pm_{r,\bullet} \leq T^\pm_{r,\bullet}$,  every plaque of $\T^\pm_{r,\bullet}$ is also a plaque of   $T^\pm_{r,\bullet}$.  In particular,  property (P4) and the estimates \eqref{eq:stable-estimate} and \eqref{eq:unstable-estimate} also hold for the plaques of $\T^\pm_{r,\bullet}$
\end{remark}

\subsection{Definition of $T^\pm_{r,\bullet}$  and properties (P1) to (P4)} \label{subsec:P1-P4}

We now begin the construction of the currents described above.  We keep the same notations.  We will construct the currents $T^+_{r,\bullet}$ in full detail.  The currents $T^-_{r,\bullet}$ can be built  in an analogous way by replacing $f$ by $f^{-1}$.

Recall that $n \geq 1$ is fixed and sufficiently large. The starting point is the convergence $\lim_{m \to \infty} \frac{1}{d^m} (f^m)^*[L] = T^+.$ Taking $m \gg n$, we see that $T^+$ is well approximated by $d^{-m}$ times the current of integration along the algebraic curve $f^{-m}(L)$. We will restrict this curve to each cell $\W_{r,\eta}^2$ and discard the components with undesirable properties (e.g., not graphs,  excessively large area under the action of iterates of $f$,   etc.). This will give the desired currents.

\medskip

From here on, we fix a line $L \subset \P^2$ not passing through $I^+$ and $I^-$.

\begin{definition}[The set of good plaques $\Grm^{i,+}_{r,m}$] \label{def:good-plaques1}
Let $\pi_i: \C^2 \to \C$, $i=1,2$ be the two canonical projections.  Fix $m \geq 1$.  We denote by $\Grm^{i,+}_{r,m}$ the set of all connected components $\Delta$ of $\pi_i^{-1}(\overline \W_{2r,\eta_i}) \cap f^{-m}(L)$ such that $\pi_i\big|_\Delta$ is a biholomorphism onto its image  and such that $\Area(f^j(\Delta)) \leq d^{-j/2}$ for all $0 \leq j < m$.
\end{definition}

\begin{lemma}  \label{lemma:slope0}
Let $\Delta \in \Grm^{1,+}_{r,m}$ and write it  as $\Delta = \{(z,\varphi(z)): z \in \W_{2r,\eta_1}\}$ for some holomorphic function $\varphi: \W_{2r,\eta_1} \to \C$.  There exists a constant $c_0>0$ independent of $r$ and $N$ such that  $\| \varphi' \|_{L^{\infty}( \W_{r,\eta_1})} \leq c_0 r^{-1}$ and  $\| \varphi'' \|_{L^{\infty}( \W_{r,\eta_1})} \leq c_0 r^{-2}$.  An analogous result holds for $\Delta \in \Grm^{2,+}_{r,m}$ given as the graph above the second coordinate axis.
\end{lemma}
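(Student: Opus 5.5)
The plan is to combine the area bound from Definition \ref{def:good-plaques1} (with $j=0$) and the fact that $\Delta$ lies in a bounded region with Cauchy estimates on the smaller square. The key quantity is $\Area(\Delta)\le 1$, which holds because $\Area(f^0(\Delta))\le d^0=1$. Since $\Delta$ is the graph of $\varphi$ over $\W_{2r,\eta_1}$, its area is
$$\int_{\W_{2r,\eta_1}} \big(1+|\varphi'|^2\big)\, d\Leb,$$
and therefore $\int_{\W_{2r,\eta_1}} |\varphi'|^2 \, d\Leb \le 1$.

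\textbf{Pointwise bound on $\varphi'$.} Take $z\in \W_{r,\eta_1}$. The disc $\D(z, r/2)$ is contained in $\W_{2r,\eta_1}$: the square $\W_{2r,\eta_1}$ has half-side $2r$ and $\W_{r,\eta_1}$ has the same center with half-side $r$, so the margin is at least $r$. The function $\varphi'$ is holomorphic, so $|\varphi'|^2$ is subharmonic and satisfies the sub-mean value inequality. This gives
$$|\varphi'(z)|^2\le \frac{1}{\pi (r/2)^2}\int_{\D(z,r/2)}|\varphi'|^2 \le \frac{4}{\pi r^2},$$
hence $|\varphi'(z)|\le c\, r^{-1}$.

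\textbf{Bound on $\varphi''$.} To get room for a second Cauchy estimate, apply the same mean-value argument on the intermediate square $\tfrac32 r\W+\tau_1+2r\eta_1$. Every point there has a disc of radius $r/2$ inside $\W_{2r,\eta_1}$, so $|\varphi'|\le c r^{-1}$ on that intermediate square. For $z\in\W_{r,\eta_1}$, the disc $\D(z,r/2)$ lies in the intermediate square, and Cauchy's estimate yields
$$|\varphi''(z)|\le \frac{\sup_{\D(z,r/2)}|\varphi'|}{r/2}\le c_0 r^{-2}.$$

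The constants depend only on universal geometry: $\pi$ and the ratios $1:\tfrac32:2$. They are independent of $r$, of $m$ (the ``$N$'' in the statement), and of $\eta$, as required. The graph case over the second coordinate axis is identical with the roles of the coordinates swapped.

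The only place where care is needed is the area formula for a graph. For a holomorphic graph, the Euclidean area with respect to the standard Kähler form is indeed $\int(1+|\varphi'|^2)$, because the pullback of $\frac{i}{2}(dz\wedge d\bar z+dw\wedge d\bar w)$ under $z\mapsto (z,\varphi(z))$ is $(1+|\varphi'|^2)\frac{i}{2}dz\wedge d\bar z$. If the paper measures area with $\omegaFS$, this is comparable to the Euclidean form on the bounded region containing $\Delta$, which costs only a constant. That region is bounded because $\Delta\subset f^{-m}(L)$ has area at most $1$ and lies over a bounded square; alternatively one can note that the relevant cells lie near $\D(0;R)^2$. Either way, the constant is absorbed into $c_0$.
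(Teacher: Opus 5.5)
Your proof is correct, but it takes a different route from the paper's. The paper never uses the area formula for a graph. It applies the Briend--Duval inequality $\diam(\widehat\Delta)^2\le cM^{-1}\Area(\Delta)$ to the annulus $\Delta\setminus\widehat\Delta$. Here $\widehat\Delta$ is the part of $\Delta$ over the concentric square of half-side $3r/2$, and the modulus $M$ is scale invariant, hence independent of $r$. This shows that $\varphi-\varphi(p_\eta)$ is bounded on that square by a constant independent of $r$. Cauchy estimates for $\varphi$ itself then give both $|\varphi'|\lesssim r^{-1}$ and $|\varphi''|\lesssim r^{-2}$.

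You instead convert the area bound into the $L^2$ bound $\int_{\W_{2r,\eta_1}}|\varphi'|^2\le 1$. You then pass to $L^\infty$ via the sub-mean value property of the subharmonic function $|\varphi'|^2$, and need Cauchy only once. Your version is self-contained and avoids the external lemma. The paper's version controls the oscillation of $\varphi$, a conformally invariant statement that does not use the graph structure. That is why the same argument is reused in Proposition~\ref{prop:P5} for $f^p(\widehat{\Delta^{\s}})$, which is no longer a graph. Your method would work there too: for the injective holomorphic map $h=f^p\circ\widetilde\varphi$ one has $\Area\big(h(\W_{2r,\eta_1})\big)=\int_{\W_{2r,\eta_1}}\|h'\|^2$ with $\|h'\|^2$ subharmonic. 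This gives $\|h'\|\lesssim r^{-1}d^{-p/4}$ on $\W_{r,\eta_1}$ directly.

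One correction to your last paragraph. A Fubini--Study area bound over a bounded base does not by itself confine $\Delta$ to a bounded set. Consider pieces of the conic $w(z-a)=1$ over the square, with $a$ just outside it. They have Fubini--Study area $O(r^2)$, yet $|\varphi|$ is as large as you like and $|\varphi'|\approx r^{-2}$ somewhere on $\W_{r,\eta_1}$. What legitimizes comparing the Euclidean and Fubini--Study forms is your second reason: the plaques that matter lie in a fixed bounded region. Indeed, since $a^+\neq I^+$, the curves $f^{-m}(L)$, $m\ge 1$, stay uniformly bounded over bounded squares. The paper's proof uses the same boundedness implicitly when it passes from a diameter bound to a bound on $\varphi-\varphi(p_\eta)$. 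So this is not a difference between the two arguments.
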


\begin{proof}
Fix  $\Delta$ as in the statement and set $$\widetilde{\Delta}  := \{(z,\varphi(z)): z \in \W_{r,\eta_1}\} \quad \text{and} \quad \widehat{\Delta}   := \{(z,\varphi(z)): z \in \W_{3r/2,\eta_1}\},$$ so that $\widetilde{\Delta} \subset \widehat{\Delta} \subset \Delta$.  By definition, we have that  $\Area(\Delta) \leq 1$.  Let $M$ be the modulus of $\Delta \setminus \widehat{\Delta}$ and observe that it is independent of $r$ and $N$.   By \cite[Appendice]{briend-duval:ihes},  there exists a constant $c>0$ such that $\diam (\widehat{\Delta})^2 \leq c M^{-1} \Area((\Delta)) \leq c M^{-1 }$, so $ \diam (\widehat{\Delta})  \leq c^{1/2}M^{-1/2 }$.  In particular,  if $p_\eta$ denotes the center of  $\W_{r,\eta_1}$,  the function $\varphi(z) - \varphi(p_\eta)$ is bounded over $\W_{3r/2,\eta_1}$  by a constant independent of $r$.  A standard application of Cauchy estimates for $\varphi'$ and $\varphi''$ over the disc $\W_{r,\eta_1}$ yields the desired result.
\end{proof}

For $i=1,2$ we define the currents
\begin{equation*}
S^{i,+}_{r,m} = \frac{1}{d^m} \sum_{\Delta \in \Grm^{i,+}_{r,m}} [\Delta].
\end{equation*}

Observe that, by construction,  $0\leq  S^{i,+}_{r,m}  \leq d^{-m} [ f^{-m}(L)]$.  Note however that $S^{i,+}_{r,m} $ is not closed.  Its boundary is contained in union of the boundaries of the cells $\overline \W_{2r,\eta}^2$.

\begin{proposition} \label{prop:dujardin}
There is a constant $C>0$ independent of $r$ and $m$ such that
$$\langle d^{-m} [f^{-m}(L)]  -  S^{1,+}_{r,m},  i dz \wedge d \overline z \big|_{\D(0,2R)} \rangle \leq Cr^2.$$
and
$$\langle d^{-m} [f^{-m}(L)]  -  S^{2,+}_{r,m},  i dw \wedge d \overline w \big|_{\D(0,2R)} \rangle \leq Cr^2.$$
\end{proposition}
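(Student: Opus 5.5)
The plan is to follow Dujardin's approach to the laminarity of $T^+$, based on counting sheets and the Riemann–Hurwitz formula, and to make each estimate explicit at scale $r$. I treat the first projection; the second is symmetric. The curve $C_m := f^{-m}(L)$ has degree $d^m$, and a generic vertical line meets it in $d^m$ points. Hence
$$\langle d^{-m}[C_m], i\,dz\wedge d\overline z|_{\D(0,2R)}\rangle = \int_{\D(0,2R)} d^{-m}\,\#(\pi_1^{-1}(z)\cap C_m)\, i\,dz\wedge d\overline z$$
is bounded by a constant times $\Leb(\D(0,2R))$. The quantity to estimate is $d^{-m}$ times the number of points of $C_m$ over $z$ that do not lie on a plaque of $\Grm^{1,+}_{r,m}$, integrated in $z$. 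So I must bound the number of ``bad'' components of $\pi_1^{-1}(\overline\W_{2r,\eta_1})\cap C_m$, weighted by their sheet numbers, summed over the $O(r^{-2})$ squares $\W_{2r,\eta_1}$ that meet $\D(0,2R)$. The target is $O(r^{2})$ in total: each square has area $\sim r^2$, and only $O(1)$ squares can carry non-negligible loss.

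First I discard the components that fail to be graphs. I apply Riemann–Hurwitz to the branched covering $\pi_1 : C_m\cap\pi_1^{-1}(\W_{2r,\eta_1})\to \W_{2r,\eta_1}$. A component that is not a graph has degree at least $2$ and, being a covering of a disc, either has a critical point or has nonpositive Euler characteristic. Its sheets are therefore controlled by the number of critical points of $\pi_1|_{C_m}$ together with the genus contributions. The vertical tangencies of $C_m$ number $O(d^m)$, by a degree count or by the genus of the normalization of $C_m$ as in Dujardin. Summed over all squares, the bad sheets contribute $O(d^m)$ per unit of the fibre count, and after the factor $d^{-m}$ a squarewise bound follows. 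To obtain $r^2$ and not merely $O(1)$, I choose the translation $\tau_1$, or average over it, so that the tangency count is spread over squares. The loss in a square is (number of critical points in it)$\times\Leb(\W_{2r})\times d^{-m}$, and summing gives $\lesssim d^{-m}\cdot O(d^m)\cdot r^2 = O(r^2)$.

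Second I discard the graphs that violate the area condition $\Area(f^j(\Delta))\le d^{-j/2}$ for some $0\le j<m$. For fixed $j$, the images $f^j(\Delta)$ of distinct graphs are disjoint pieces of $f^{j-m}(L)$ inside a compact region, and $\Area(f^{j-m}(L)\cap \D(0,2R)^2)\lesssim d^{m-j}$ since $d^{-(m-j)}[f^{j-m}(L)]\to T^+$ has locally bounded mass. So at most $C d^{m-j}d^{j/2}=Cd^{m-j/2}$ graphs violate the condition at time $j$. Each bad graph contributes at most $d^{-m}\Leb(\W_{2r})\lesssim d^{-m}r^2$, and summing over $j$ gives $\sum_j C d^{-j/2} r^2 = O(r^2)$.

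The main obstacle is the Riemann–Hurwitz step. One has to make sure the non-graph loss is $O(r^2)$ uniformly in $m$, and not $O(1)$ or $O(r)$. This requires controlling the total Euler characteristic defect of $C_m$ over the vertical cells by $O(d^m)$, and a trivial bound on $\#\{\text{cells}\}$ does not give this. Here I would follow Dujardin's argument with the genus of $f^{-m}(L)\simeq L$, which is rational, and the count of vertical tangencies, which is $\le 2d^m$ for a rational curve of degree $d^m$.
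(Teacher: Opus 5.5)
Your proposal is correct and follows the paper's route. The non-graph components are handled by Dujardin's estimate, which the paper simply cites from \cite{dujardin:laminar-P2} and you re-derive via Riemann--Hurwitz for the rational curve $f^{-m}(L)$, whose total ramification is $2d^m-2$. The graphs violating $\Area(f^j(\Delta))\le d^{-j/2}$ are then counted exactly as in the paper: there are at most $d^{m-j/2}$ of them, so the loss at step $j$ is $\lesssim r^2d^{-j/2}$.

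Two small corrections. First, no choice or averaging of $\tau$ is needed: since $\chi(\Delta)\le 1$, each component satisfies $\deg(\pi_1|_\Delta)\le 1+b_\Delta$, so over all squares there are at most $2(2d^m-2)$ bad sheets, up to a bounded factor for critical values on grid lines, and this holds for every grid. Second, the area count should use the global Fubini--Study mass $d^{m-j}$ of $f^{j-m}(L)$. It should not rely on the pieces $f^j(\Delta)$ lying in $\D(0,2R)^2$, which fails in general.
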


\begin{proof}
The result is a refinement of an estimate of Dujardin.   We will only prove the first inequality,  the second being analogous.  In order to lighten the notation,  set $S:= S^{1,+}_{r,m}$ and $\Grm:=  \Grm^{i,+}_{r,m}$. We will introduce an intermediary current and estimate its difference with  $S$.  Let $\Grm'$ be the set the set of all connected components $\Delta$ of $\pi_1^{-1}(\overline \W_{2r,\eta_1}) \cap f^{-m}(L)$ such that $\pi_i\big|_\Delta$ is a biholomorphism onto its image. In other words,  the components of $\Grm'$   satisfy only the first condition  of Definition \ref{def:good-plaques1}.  In particular $\Grm   \subset \Grm'$. Set $S':= d^{-m} \sum_{\Delta \in \Grm'} [\Delta]$. Notice that $S  \leq S' \leq d^{-m} [f^{-m}(L)]$. 

 The  estimate  in  \cite[p. 753]{dujardin:laminar-P2}  implies that  $\langle d^{-m} [f^{-m}(L)]  -  S',  i dz \wedge d \overline z \big|_{\D(0,2R)} \rangle \leq Cr^2$,  so it is enough to prove that
\begin{equation} \label{eq:S'-S}
 \langle  S' - S ,   i dz \wedge d \overline z \big|_{\D(0,2R)} \rangle \leq C' r^2
\end{equation}
for some constant $C' > 0$.
By the definition of the above currents we have that $S' - S  = d^{-m} \sum_{\Delta \in \Grm' \setminus \Grm} [\Delta]$ and the plaques $\Delta \in \Grm' \setminus \Grm$  satisfy $\Area(f^{j_0}(\Delta)) > d^{-j_0/2}$ for some $0 \leq j_0 < m$.  Let $\Brm_j$ be the set  of $\Delta \in \Grm' \setminus \Grm$  such that $\Area(f^j(\Delta)) > d^{-j/2}$ .  Then $$S' - S  \leq d^{-m} \sum_{j=0}^{m-1} \sum_{\Delta \in \Brm_j} [\Delta] = \sum_{j=0}^{m-1} U_j,  \quad \text{where} \quad U_j: =  d^{-m}\sum_{\Delta \in \Brm_j} [\Delta].$$ 

Fix  $0 \leq j < m$.  By construction $U_j \leq d^{-m}[f^{-m}(L)]$,  so $(f^j)_*(U_j) \leq d^{-m}[f^{-m +j}(L)]$.  In particular,  the mass of $(f^j)_*U_j$ is at most $d^{-m} d^{m-j} = d^{-j}$.  On the other hand,  the mass of $(f^j)_*U_j$ equals $d^{-m} \sum_{\Delta \in \Brm_j} \Area(f^j(\Delta)) \geq d^{-m} d^{-j/2} |\Brm_j|$.  It follows that $|\Brm_j| \leq d^m d^{-j/2}$.  Since each $\Delta$ is a graph over $\overline \W_{2r,\eta_1}$,  we have $\langle  [\Delta] ,   i dz \wedge d \overline z \rangle = 4\pi r^2$,  so $\langle  U_j ,   i dz \wedge d \overline z \big|_{\D(0,2R)} \rangle \leq  d^{-m} |\Brm_j| (\pi r^2) \leq 4 \pi r^2 d^{-j/2}$.    Combined with the above,  one concludes that
\begin{align*}
 \langle  S' - S ,   i dz \wedge d \overline z \big|_{\D(0,2R)} \rangle \leq  \sum_{j=0}^{m-1}  \langle  U_j ,   i dz \wedge d \overline z \big|_{\D(0,2R)} \rangle \leq 4 \pi r^2  \sum_{j=0}^{m-1} d^{-j/2} \leq C' r^2,
\end{align*}
where $C':= 4 \pi \sum_{j=0}^{\infty} d^{-j/2}$.  This concludes the proof.
\end{proof}

Denote by $C^{i,+}_{r,m}$ the union of all $\Delta  \in \Grm^{i,+}_{r,m}$ included in $ \D(0;R)^2$ so that the restriction of $S^{i,+}_{r,m}$ to $ \D(0;R)^2$  is given by $\frac{1}{d^m} [C^{i,+}_{r,m}]$ and set 
$$T^{+}_{r,m} = \frac{1}{d^m} [C^{1,+}_{r,m} \cup C^{2,+}_{r,m} ].$$

By construction $T^{+}_{r,m}$ is uniformly laminar,  $0 \leq S^{i,+}_{r,m} \leq T^{+}_{r,m} \leq d^{-m} [f^{-m}(L)] $ and by Proposition \ref{prop:dujardin} we have the mass estimate
$$ \Big\| d^{-m} [ f^{-m}(L)] - T^{i,+}_{r,m} \Big\|_{P_r} \leq 2Cr^2.$$

In order to obtain a compact family of plaques,  we consider only those entering a smaller subcell of $\W_{r,\eta}^2$

\begin{definition} \label{def:Gr}
We define  $\Gc_r$ as the set of analytic subsets $\Delta$ of the cells $\W_{2r,\eta}^2$ of $P_{2r}$ such that $\Delta \cap  (1-(2r)^\gamma) \overline \W_{2r,\eta}^2 \neq \varnothing$ and  $\Delta$ is included in a holomorphic disc $\Delta' \subset \overline \W_{2r,\eta}^2$ whose area is less than $k$ and such that either $\pi_1\big|_{\Delta' \cap \W_{2r,\eta}^2 }$ or  $\pi_2\big|_{\Delta' \cap \W_{2r,\eta}^2 }$ is biholomorphic.
\end{definition}

The above conditions implies that the area of the graphs $\Delta$ are uniformly bounded,  so by Bishop's theorem  $\Gc_r$ equipped with the Hausdorff distance is a compact space. In particular, the space of positive measures on $\Gc_r$ whose mass is uniformly bounded by a given constant is compact.  A theorem of Lelong implies that the graphs $\Delta$ in the above definition have area at least $(2r)^{2+2\gamma}$.  

Observe that we can write $$\mathbf 1_{(1-(2r)^\gamma) P_{2r}} T^{+}_{r,m} = \int_{\Gc_r} [\Delta \cap (1-(2r)^\gamma) P_r] d \nu^+_{r,m}(\Delta)$$ for some positive measure $\nu^+_{r,m}$ on the space $\Gc_r$.   From the fact that the mass of $T^{i,+}_{r,m} $ is bounded by one and  the area of $\Delta$ is at least $(2r)^{2+2\gamma}$,  it follows that the mass of the measures $\nu^+_{r,m}$ are bounded by a constant independent of $m$.  In particular $\{\nu^+_{r,m}\}_{m \geq 1}$ is a compact family of measures on  $\Gc_r$.  

\begin{definition} \label{def:T^+_r}
Let $\nu^+_{r} = \lim_{l \to \infty} \nu^+_{r,m_l}$ be a cluster value of $\{\nu^+_{r,m}\}_{m \geq 1}$.  We define
$$T^+_r = \int_{\Gc_r} [\Delta] d \nu^+_{r}(\Delta).$$ 
\end{definition}

By construction the above current satisfies $0 \leq T^+_r  \leq T^+$.  The fact that $\mathbf 1_{(1-(2r)^\gamma) P_{2r}} T^{+}_{r,m} $ is uniformly laminar and  a standard argument using Hurwitz Lemma give  that $T^+_r$ is a uniformly laminar current over  $(1-(2r)^\gamma) P_{2r}$. 

By repeating the above constructions replacing $f$ by $f^{-1}$ we obtain currents $T^{-}_{r,m}$ such that $0 \leq T^{-}_{r,m} \leq d^{-m} [f^{m}(L)] $ and as $m$ tends to infinity along a sub-sequence one obtains a measure $\nu^-_r$ on $\Gc_r$ and a current $T^-_r$ that is uniformly laminar current over  $(1-(2r)^\gamma) P_r$ and $0\leq T^-_r \leq T^-$.

It follows from Proposition \ref{prop:dujardin} that 
\begin{equation} \label{eq:mass-T-T_r}
 \|T^\pm -   T^\pm_r  \|_{(1 - (2r)^\gamma ) P_{2r}} \leq  2 Cr^2.
\end{equation}

Since $T^\pm_r$ are both uniformly laminar over  $(1-(2r)^\gamma) P_r$ one can define a positive measure $\overline \mu_r$ by the formula
\begin{equation*}
\overline \mu_r: =  T^+_r  \dot{\wedge} \,  T^-_r  := \iint_{\Delta,\Delta' \in \Gc_r} [\Delta \cap \Delta'] d \nu^+_r(\Delta) d \nu^-_r(\Delta'),
\end{equation*} 
where,  as usual,  $[\Delta \cap \Delta']$ denotes the sum of Dirac masses counted with multiplicity along the isolated points of  $\Delta \cap \Delta'$ and  $ [\Delta \cap \Delta'] = 0$ if $\Delta \cap \Delta'$ has positive dimension, see \cite{dujardin:intersection}.  Observe that $\overline \mu_r$ is a positive measure supported on $(1-(2r)^\gamma) P_r$ whose mass is bounded by one and $\overline \mu_r \leq \overline \mu$.

\begin{proposition} \label{prop:geometric-intersection0}
Let $r$,  $\gamma$ and $\overline \mu_r$ be as above.  Then,
$$\|  \mu -  \overline \mu_r  \| \leq C r^\gamma.$$
for some constant $C>0$ independent of $r$ and $\gamma$.  In particular,  Properties (P1), (P2) and (P3) hold for $T^\pm_{r,0}$ and $\overline \mu_{r,0}$.
\end{proposition}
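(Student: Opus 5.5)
Since $\overline\mu_r \le \mu$ (both $T^\pm_r\le T^\pm$, and geometric intersection of laminar pieces is dominated by the wedge product, cf. \cite{dujardin:intersection}), it is enough to bound the total mass from below:
$$\|\mu\|-\|\overline\mu_r\| \le C r^\gamma .$$
The plan is to split $\mu$ into the part carried by the street network and the part carried by the shrunken cells $(1-(2r)^\gamma)P_{2r}$. The first part is at most $c r^\gamma$ by Lemma \ref{lemma:dujardin-fubini}.

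Inside the shrunken cells I would pass to the level of the approximating curves and write
$$T^+ - T^+_r = (T^+ - T^+_r)\ \text{(a positive current of mass } \le 2Cr^2 \text{ there, by \eqref{eq:mass-T-T_r})},$$
and similarly for $T^-$. The key step is to control $\mu - T^+_r\wedge T^-_r$, and then to identify $T^+_r\wedge T^-_r$ with the geometric intersection $\overline\mu_r$. The latter is Dujardin's result that for uniformly laminar currents which are restrictions of closed currents with continuous potentials, the wedge product is geometric. Here $T^\pm_r$ are dominated by $T^\pm$, whose potentials $G^\pm$ are Hölder, and the plaques of $T^+_r$ and $T^-_r$ have no common components. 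The last point holds because $\mu$ gives no mass to curves; equivalently, $\nu^+_r\otimes\nu^-_r$-almost every pair of plaques intersect transversally, as in \cite{bedford-lyubich-smillie}.

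For the former I would write
$$\mu - T^+_r\wedge T^-_r = (T^+-T^+_r)\wedge T^- + T^+_r\wedge (T^- - T^-_r),$$
so that it suffices to bound $\langle (T^+-T^+_r)\wedge T^-,\chi\rangle$ for a cutoff $\chi$ adapted to the cells. I expect this to be the main obstacle: a mass bound of order $r^2$ for $T^+-T^+_r$ must be converted into a bound on its wedge with $T^-$.

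My plan for this step is to use a cutoff $\chi$ equal to $1$ on $(1-(2r)^\gamma)P_{2r}$, supported in $P_{2r}$, with $\|\chi\|_{\mathcal C^2}\lesssim r^{-2-2\gamma}$. I would then replace $T^-$ by $\ddc$ of a regularization $G^-_\epsilon$ at scale $\epsilon$. This gives an error
$$\epsilon^{\gamma^-}\cdot \|\ddc\chi\|\cdot\|T^+-T^+_r\| \lesssim \epsilon^{\gamma^-}r^{-2-2\gamma},$$
using integration by parts and the fact that $T^+-T^+_r$ is closed in the interior of the cells up to boundary terms supported in the streets. The main term is bounded by
$$\|T^+-T^+_r\|_{P_{2r}}\,\|\ddc G^-_\epsilon\|_\infty \lesssim r^2\epsilon^{\gamma^--2}.$$
Choosing $\epsilon$ a power of $r$ balances the two terms. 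The constraint $\gamma<\frac13\max\gamma^\pm$ is what makes this yield $O(r^\gamma)$, after swapping the roles of $\pm$ if $\gamma^+$ is the larger exponent. The boundary terms coming from the non-closedness of $T^\pm_r$ live in the street network and are absorbed via Lemma \ref{lemma:dujardin-fubini} applied to $\sigma^\pm$.

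Summing over the cells meeting $\D(0;R)^2$ gives $\|\mu-\overline\mu_r\|\le Cr^\gamma$. Properties (P1) and (P2) then hold by construction, and (P3) is exactly this estimate.
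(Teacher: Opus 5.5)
This is essentially the paper's argument. You discard the streets with Lemma \ref{lemma:dujardin-fubini}, replace $T^+_r\dot\wedge T^-_r$ by $T^+_r\wedge T^-_r$ via Dujardin, telescope, and integrate by parts against a cut-off with $\ddc\chi\lesssim r^{-2-2\gamma}$. This uses the $O(r^2)$ mass of $T^\pm-T^\pm_r$ and the Hölder continuity of the potentials. The only variation is how you use Hölder continuity. You mollify $G^-$ at scale $\epsilon$. The paper instead subtracts the constant $u^-(p_\eta)$ on each cell, which is allowed because $\chi$ is compactly supported in each open cell, where $T^+-T^+_r$ is closed. That gives $r^{\gamma^--2\gamma}$ directly with no main term. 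Your choice $\epsilon=r$ gives the same exponent.

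There are two slips to fix.

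First, your displayed error bound $\epsilon^{\gamma^-}r^{-2-2\gamma}$ has dropped the factor $\|T^+-T^+_r\|\lesssim r^2$. As written it cannot be balanced against the main term $r^2\epsilon^{\gamma^--2}$ for any $\epsilon<1$: the product of the two bounds is $\epsilon^{2\gamma^--2}r^{-2\gamma}\geq r^{-2\gamma}>1$. With the factor restored, $\epsilon=r$ gives $r^{\gamma^--2\gamma}+r^{\gamma^-}\lesssim r^\gamma$. This needs the $O(r^2)$ mass bound on the support of $\ddc\chi$, not merely where $\chi\equiv 1$. So the transition region of $\chi$ must lie where \eqref{eq:mass-T-T_r} applies, and $\chi$ must be compactly supported in the open cells so that no boundary terms arise. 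Nothing can be ``absorbed'' in the streets: there the trace mass is only $O(r^\gamma)$, which is useless once multiplied by $\|\ddc\chi\|\sim r^{-2-2\gamma}$.

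Second, swapping $\pm$ does not let you get by with only the larger Hölder exponent. The two terms you must control are $(T^+-T^+_r)\wedge T^-$ and, after using $T^+_r\leq T^+$, $T^+\wedge(T^--T^-_r)$. These need the moduli of continuity of $G^-$ and $G^+$ respectively, in either order of telescoping. So, as in the paper's proof, you actually need $3\gamma<\min\{\gamma^+,\gamma^-\}$.
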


\begin{proof}
Both $\overline \mu$ and $\overline \mu_r$ are supported by $P_{2r}$,  so it is enough to bound $\big(\overline \mu -  \overline \mu_r  \big)(P_{2r})$.  We start by fixing a cut-off function $\chi_r$ with support in $\D(0;R)^2$ with the following standard properties: $0 \leq \chi_r \leq 1$,  $\chi_r \equiv 1$ over $(1 - (2r)^\gamma) P_{2r} $,   $\chi_r \equiv 0$ over $(1 - r^\gamma) P_{2r} $ and $\ddc \chi_r  \leq A / r^{2+2\gamma} \omegaFS$ for some constant $A>0$.

Writing $P_{2r} = (1-(2r)^\gamma)P_{2r}  \cup \big( P_{2r}  \setminus ((1-(2r)^\gamma) P_{2r} ) \big)$,  using that  $\overline \mu -  \overline \mu_r \leq \overline \mu$ and applying Lemma \ref{lemma:dujardin-fubini} we get that
\begin{equation} \label{eq:geometric-int-1}
\big( \mu -  T^+_r \dot{\wedge} \,  T^-_r  \big)(P_{2r}) \leq  \langle  \overline \mu -  T^+_r \dot{\wedge} \,  T^-_r,  \chi_r \rangle + 16 r^\gamma
\end{equation}
and a result of Dujardin says that $ T^+_r \dot{\wedge} \,  T^-_r =   T^+_r \wedge \,  T^-_r$. Therefore,  it is enough to estimate the integral $ \langle  \overline \mu -  T^+_r  \wedge \,  T^-_r,  \chi_r \rangle$.

We have
\begin{align}  \label{eq:geometric-int-2}
 \mu -  T^+_r  \wedge \,  T^-_r &=  T^+  \wedge \,  T^- -  T^+_r  \wedge \,  T^-_r = \big( T^+ - T^+_r\big) \wedge T^- + T^+_r \wedge \big( T^- - T^-_r\big) \nonumber \\
&\leq \big( T^+ - T^+_r\big) \wedge T^- + T^+ \wedge \big( T^- - T^-_r\big).
\end{align}

Let us bound $ \langle \big( T^+ - T^+_r\big) \wedge T^-,  \chi_r \rangle$.  The bound on $ \langle   T^+ \wedge \big( T^- - T^-_r\big),  \chi_r \rangle$ will be obtained in a similar fashion.

Let $u^-$ be the canonical quasi potential of $T^-$.  That is $T^- = \omegaFS + \ddc u^-$ and $\langle \overline \mu, u^- \rangle = 0$.  Then, 
\begin{align*}
 \langle \big( T^+ - T^+_r\big) \wedge T^-,  \chi_r \rangle &= \int \chi_r  \big( T^+ - T^+_r\big) \wedge T^-  \\
 &= \int \chi_r  \big( T^+ - T^+_r\big) \wedge \ddc u^- + \int \chi_r  \big( T^+ - T^+_r\big) \wedge \omegaFS.
\end{align*}

From \eqref{eq:mass-T-T_r} and the fact that $\omegaFS$ is comparable with the standard K\"ahler form on $P_{2r}$,  we get that the last integral is bounded by a constant times $r^2$,  which is smaller than $r^\gamma$.   For the remaining integral,  denote by $p_\eta$ the center of the cell $\W_{r,\eta}^2$  and let $c_\eta:= u^-(p_\eta)$.   Recall that  $\chi_r \equiv 0$ near the boundary of the cells and $T^+ - T^+_r$ is closed in  $(1 - (2r)^\gamma)P_{2r}$.  Then,  by Stokes' theorem and  \eqref{eq:mass-T-T_r} again, we get
\begin{align*}
\int \chi_r  & \big( T^+ - T^+_r\big)  \wedge \ddc u^-  = \int u^- \,  \ddc  \chi_r  \big( T^+ - T^+_r\big) \\ 
&= \sum_\eta  \int_{\W_{r,\eta}^2} \big(u^- - c_\eta \big) \,  \ddc  \chi_r  \big( T^+ - T^+_r\big) \\
&\leq \frac{A}{r^{2+2\gamma }} \sum_\eta  \int_{(1-(2r)^\gamma) \W_{r,\eta}^2} \big| u^- - c_\eta\big|  \big( T^+ - T^+_r\big) \wedge \omegaFS
\\
&\leq \frac{A r^{\alpha^-}}{r^{2+2\gamma }} \sum_\eta  \int_{(1-(2r)^\gamma) \W_{r,\eta}^2}    \big( T^+ - T^+_r\big) \wedge \omegaFS \\
&\leq  AC r^{2} r^{\alpha^-} r^{-2- 2\gamma } = AC r^{\alpha^- - 2\gamma } \leq AC r^\gamma,
\end{align*}
where in the last step we have used that $3 \gamma < \alpha^-$.

We have thus proved that $ \langle \big( T^+ - T^+_r\big) \wedge T^-,  \chi_r \rangle \leq C' r^\gamma$ for some constant $C' >0$.   Arguing similarly with the quasi-potential $u^+$ of $T^+$ and using that $3 \gamma < \alpha^+$ we obtain $ \langle   T^+ \wedge \big( T^- - T^-_r\big),  \chi_r \rangle \leq C'r^\gamma$.  Together with \eqref{eq:geometric-int-1} and \eqref{eq:geometric-int-2} this completes the proof.
\end{proof}

\begin{proposition}
There exists currents $T^\pm_r:= T^\pm_{r,0}, \,T^\pm_{r,-p},\, \ldots\, , T^\pm_{r,-\ell p}, \, T^\pm_{r,-n}$ and $T^\pm_{r,p}$  satisfying properties (P1), (P2) and (P3).
\end{proposition}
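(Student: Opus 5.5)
We construct the currents $T^\pm_{r,k}$ for $k \in \{p, 0, -p, \ldots, -\ell p, -n\}$ by rerunning the construction of $T^\pm_r$ with a different cut-off depth for the area conditions. For each such $k$ we fix an integer $m$ (large, with $m \gg n$) and use the shifted curves $f^{-(m+k)}(L)$, or equivalently the same algebraic curve $f^{-m}(L)$ with the good-plaque condition of Definition~\ref{def:good-plaques1} reindexed so that the area bounds $\Area(f^j(\Delta)) \le d^{-j/2}$ are imposed along the orbit relevant to time $k$. This gives sets $\Grm^{i,+}_{r,m,k}$, currents $S^{i,+}_{r,m,k}$ and $T^+_{r,m,k}$, measures $\nu^+_{r,m,k}$ on $\Gc_r$, and, after extracting a common subsequence $m_l$ for all the finitely many $k$ simultaneously, limit currents $T^+_{r,k} = \int_{\Gc_r}[\Delta]\, d\nu^+_{r,k}$. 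We do the same with $f^{-1}$ for $T^-_{r,k}$. Using a single common subsequence matters later for the compatibility (P4), so we arrange it now.

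\textbf{(P1) and (P2).} These follow exactly as for $T^\pm_r$. Each $T^+_{r,m,k}$ is bounded by $d^{-m}[f^{-m}(L)]$ (or its shifted analogue), and the latter converges to $T^+$; hence the limit satisfies $0 \le T^+_{r,k} \le T^+$. Uniform laminarity over $(1-(2r)^\gamma)P_{2r}$ follows from compactness of $\Gc_r$ (Bishop), together with Hurwitz's lemma applied to limits of disjoint graphs over $\W_{2r,\eta_i}$.

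\textbf{(P3).} The key observation is that the proof of Proposition~\ref{prop:dujardin} only uses two ingredients: Dujardin's estimate for graphs, and the counting bound $|\Brm_j| \le d^m d^{-j/2}$. The counting bound comes from the mass of push-forwards of $d^{-m}[f^{-m}(L)]$, and this argument is insensitive to the shift by $k$. Hence the mass estimate \eqref{eq:mass-T-T_r} holds for each $T^\pm_{r,k}$, with the same constant. The proof of Proposition~\ref{prop:geometric-intersection0} then applies verbatim. It uses \eqref{eq:mass-T-T_r}, Lemma~\ref{lemma:dujardin-fubini}, the cut-off $\chi_r$, Stokes' theorem and Hölder continuity of the quasi-potentials $u^\pm$, and none of these refers to $k$. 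This gives $\|\mu - \overline\mu_{r,k}\| \le C r^\gamma$ with $\overline\mu_{r,k} \le \mu$.

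\textbf{Main obstacle.} The one delicate point is that, when $k$ is of order $n$, the shifted area conditions have to be checked against the mass bound for $(f^{j})_*$ of the shifted curve. I would handle this by always working with curves $f^{-M}(L)$ with $M = m + |k|$ and normalizing by $d^{-M}$. The counting argument then gives $|\Brm_j| \le d^{M} d^{-j/2}$, so the constant stays uniform in $k$ and the bound $Cr^\gamma$ is independent of $n$.
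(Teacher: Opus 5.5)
Your proposal is correct and is essentially the paper's proof. The paper builds $T^+_{r,-jp}$ from the components of $\pi_i^{-1}(\overline \W_{2r,\eta_i}) \cap f^{-m-jp}(L)$ normalized by $d^{-m-jp}$ (and symmetrically with $f^{-1}$ for $T^-$), passes to the limit along a subsequence common to all the finitely many currents, and obtains (P1)--(P3) exactly as for $T^\pm_r$ via Propositions~\ref{prop:dujardin} and~\ref{prop:geometric-intersection0}.

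One point to tidy: your two descriptions of the shift, $f^{-(m+k)}(L)$ versus $M=m+|k|$, are inconsistent, and the latter gives the same curves for $k=\pm p$. This is harmless for (P1)--(P3), which only use $M\to\infty$, but the sign of the shift has to be matched to the direction of $f^{\pm p}$ once you turn to (P4)--(P6).
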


\begin{proof}
By the construction of $T^\pm_r = T^\pm_{r,0}$ and Proposition \ref{prop:geometric-intersection0},  properties (P1) -- (P3) hold at the level zero.  We now construct  $T^\pm_{r,-p},\, \ldots\, , T^\pm_{r,-\ell p}, \, T^\pm_{r,-n}$ and verify the corresponding properties.  As before we will only detail the construction of $T^+_{r,\bullet}$,  that of $T^-_{r,\bullet}$ being analogous after replacing $f$ by $f^{-1}$.

By definition $T^+_{r,0}$ is obtained from a limit of certain components $d^{-m}[f^{-m} (L)]$ along a subsequence  $(m_l)_{l \geq 1}$.  The construction of the current $T^+_{r,-p}$ will follow the same lines,  but considering instead components of $d^{-m-p}[f^{-m-p} (L)]$.  Keeping the above notation, we consider $S^{i,+}_{r,m,-p}$,  $T^{+}_{r,m,-p}$ and get a uniformly laminar current  $$\mathbf 1_{(1-(2r)^\gamma) P_{2r}} T^{+}_{r,m,-p} = \int_{\Gc_r} [\Delta \cap (1-(2r)^\gamma) P_{2r}] d \nu^+_{r,m,-p}(\Delta)$$ for some positive measure $\nu^+_{r,m,-p}$ on the space $\Gc_r$.  Let $(m_l)_{l \geq 1}$ be the subsequence used to define $T^\pm_r$ (see Definition \ref{def:T^+_r}).   By taking the limit $\nu^+_{r,m,-p}$ along a subsequence of $(m_l)_{l \geq 1}$ one gets a limiting measure on $\Gc_r$ that we denote by $\nu^+_{r,-p}$.  We then define
$$T^+_{r,-p} = \int_{\Gc_r} [\Delta] d \nu^+_{r,-p}(\Delta).$$ 

It is clear that $0\leq T^+_{r,-p} \leq T^+$ (Property (P1)) and that $T^+_{r,-p}$ is uniformly laminar over $(1-(2r)^\gamma) P_{2r}$  (Property (P2)).  We can build $T^-_{r,-p}$ by replacing $f$ by $f^{-1}$,  and the proof that  $\overline \mu_{r,\-p}:= T^+_{r,-p} \dot{\wedge} T^-_{r,-p}$ satisfies Property (P3) is analogous the the one of Proposition \ref{prop:geometric-intersection0}.

The remaining currents are obtained analogously. For $T^+_{r,-j p}$, $j=1,\ldots,\ell$ and $j=-1$,  we start from the components of $\pi_i^{-1}(\overline \W_{2r,\eta_i}) \cap f^{-m -jp}(L)$ and follow the above procedure. We then choose a subsequence of  $m$  that is common to all those used in constructing $T^+_{r,0}, T^+_{r,-p}, \ldots, T^+_{r,-(j-1)p}$, and let $m \to \infty$  along this subsequence.  

The currents $T^-_{r,-j p}$ and the measures $\overline \mu_{r,-j p}$, $j=1,\ldots,\ell$ and $j=-1$ are obtained in the same way, and the proofs of properties (P1), (P2), and (P3) remain unchanged.
\end{proof}

We end this subsection with the proof of the compatibility condition (P4)

\begin{lemma}
Let $j=-1,\ldots,\ell$,  if $\Delta^{\u}_{-jp}$ is a plaque of $T^-_{r,-jp}$ and $\Delta^{\u}_{-jp+p}$ is a plaque of $T^-_{r,-jp+p}$  then $f^p(\Delta^{\u}_{-jp})$  is either disjoint from $\Delta^{\u}_{-jp+p}$  or they meet along a common open set. An analogous property holds for the plaques of $T^-_{r,\-n}$ and $T^-_{r,-\ell p}$ under the action of $f^{p+s}$. In other words, Property (P4) holds.
\end{lemma}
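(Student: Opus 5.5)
The plan is to reduce the statement to a property of the approximating algebraic curves and then pass to the limit. For the curves, the relevant fact is that $f$ is a biholomorphism of $\C^2$, so $f^p$ maps the curve $f^{m+jp}(L)$ (intersected with $\C^2$) onto $f^{m+jp-p}(L)$... wait, we must index carefully: the currents $T^-_{r,-jp}$ are built from components of $d^{-m-jp}[f^{m+jp}(L)]$, and $T^-_{r,-jp+p}$ from components of $d^{-m-jp+p}[f^{m+jp-p}(L)]$. Since $f^{-p}$ sends $f^{m+jp}(L)$ onto $f^{m+jp-p}(L)$, the relation goes in the direction $f^{-p}$. So the natural statement is really: $f^{p}$ maps $f^{m+(j-1)p}(L)$ onto $f^{m+jp}(L)$. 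Then a plaque of the level $-jp+p$ family is pushed by $f^p$ into the curve used at level $-jp$. Whichever direction the paper's indexing requires, the key point is the same: all plaques at all levels, for a fixed large $m$ from the common subsequence, are open pieces of the single irreducible algebraic curve $f^{m'}(L)$ or its images under $f^{\pm p}$. Two open pieces of the same smooth (or at least pure one-dimensional) analytic curve either are disjoint or overlap along an open set, because their intersection is an analytic subset of each disc, and if it has an accumulation point then by the identity principle it contains an open subset.

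First step: at a fixed finite $m$, take a component $\Delta$ of $T^-_{r,m,-jp}$ and a component $\Delta'$ of $T^-_{r,m,-jp+p}$. Both $f^p(\Delta)$ (or $f^{-p}$, depending on the direction) and $\Delta'$ are embedded holomorphic discs contained in the same algebraic curve $f^{m+jp-p}(L)\cap\C^2$. If $f^p(\Delta)\cap\Delta'$ has an accumulation point, the two discs agree on an open set by the identity theorem applied to the local graphs; otherwise their intersection is discrete. I must also exclude isolated transverse intersection points: these could only happen at singular points of the curve where two local branches cross. So I would first note that for generic $L$ the curve $f^{k}(L)$ has only finitely many singular points, and since plaques avoid a small set one can discard them; alternatively, use that the statement concerns the plaques of the limit currents and that isolated intersections are non-generic.

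Second step, the main obstacle: passing to the limit $m\to\infty$ along the common subsequence. Plaques of $T^-_{r,-jp}$ are Hausdorff limits of plaques $\Delta_m$ at level $m$, and similarly for level $-jp+p$. I would use Hurwitz's lemma: suppose the limit discs $f^p(\Delta)$ and $\Delta'$ meet at an isolated point with positive intersection multiplicity. Then for large $m$, the approximating $f^p(\Delta_m)$ and $\Delta'_m$ also meet at an isolated point nearby (stability of isolated intersections of holomorphic curves). By the first step they would then coincide on an open set and hence, being graphs, coincide near that point, contradicting the isolated intersection. So in the limit the two plaques are either disjoint or overlap on an open set. The same argument with $f^{p+s}$ handles the pair $T^-_{r,-n}$, $T^-_{r,-\ell p}$. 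The subtle part is controlling singular points of the approximating curves and checking that $f^p(\Delta_m)$ stays a disc of bounded geometry so Hurwitz applies; I would try to use the area bound from Definition \ref{def:good-plaques1} to guarantee this.
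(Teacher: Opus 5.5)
Your argument is essentially the paper's. The paper works at a common index $m_l$ of the subsequence. It uses Hurwitz's lemma to transfer a putative isolated intersection of $f^p(\Delta_{-p})$ and $\Delta_0$ to the approximating discs $f^p(\Delta^{m_l}_{-p})$ and $\Delta^{m_l}_0$. It then gets a contradiction because both are open pieces of the same curve $f^{m_l}(L)$.

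\textbf{The gap: singular points.} The one step you leave open is the one the paper settles in a line, and your proposed patches for it do not work. Neither discarding finitely many singular points nor appealing to genericity is justified: plaques may well pass through a given point, and ``non-generic'' is not an argument. Nor are these patches needed, because there are no singular points at all. Since $L$ avoids $I^\pm$, it is not the line at infinity, so $L\cap\C^2$ is an affine line. As $f^k$ is a biholomorphism of $\C^2$, the curve $f^k(L)\cap\C^2=f^k(L\cap\C^2)$ is a smooth, properly embedded copy of $\C$. Two open subsets of it that share a point share a neighbourhood of that point, so isolated intersections cannot occur at finite level. You began by saying the relevant fact is that $f$ is a biholomorphism of $\C^2$; this is exactly where it enters.

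\textbf{The bounded-geometry worry is moot.} You were concerned that $f^p(\Delta_m)$ might not stay a disc of bounded geometry. But $f^p$ is a fixed holomorphic map, so local uniform convergence $\Delta_m\to\Delta$ directly gives $f^p(\Delta_m)\to f^p(\Delta)$. No area bound from Definition \ref{def:good-plaques1} is needed.

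\textbf{The indexing is fixed by the statement.} It is not a free choice. For the compatibility to hold under $f^p$ as stated, $T^-_{r,-jp}$ must be built from $f^{m-jp}(L)$ at the common index $m$, which is what the paper's proof uses. This is also how (P4) is used later, with $f^p$ sending level-$0$ unstable plaques to level-$p$ ones. Your first guess, $f^{m+jp}(L)$, puts $f^p(\Delta^{m}_{-jp})$ and $\Delta^{m}_{-jp+p}$ in different curves, and then your argument does not apply to the statement as written.
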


\begin{proof}
We only prove the result for $j=1$, the other cases being analogous. By the definition of the current $T^-_{r,-p}$, a plaque $\Delta^{\u}_{-p}$ subordinate to it is given by a limit of $\Delta^{m_l}_{-p}$ as $l \to \infty$, where $\Delta^{m_l}_{-p}$ is a connected component of $f^{m_l -p}(L)$, and a plaque $\Delta^{\u}_{0}$ of $T^-_{r,0}$ is a limit of $\Delta^{m_l}_{0}$ as $l \to \infty$, where $\Delta^{m_l}_{0}$ is a connected component of $f^{m_l}(L)$. Recall that we considered a common subsequence in the construction of $T^-_{r,-p}$ and $T^-_{r,0}$. 

If $f^p(\Delta^{\u}_{-p})$ and $\Delta^{\u}_{0}$ are disjoint, there is nothing to show.  Assume then that $f^p(\Delta^{\u}_{-p})$ and $\Delta^{\u}_{0}$ have non-empty intersection. Suppose by contradiction that $f^p(\Delta^{\u}_{-p})$ and $\Delta^{\u}_{0}$ intersect at an isolated point. By Hurwitz's lemma, for $l$ large enough, $f^p(\Delta^{m_l}_{-p})$ and $\Delta^{m_l}_{0}$ also intersect at an isolated point. However, both $f^p(\Delta^{m_l}_{-p})$ and $\Delta^{m_l}_{0}$ are open holomorphic discs contained in $f^{m_l}(L)$. Since $f^{m_l}(L)$ is a smooth curve in $\mathbb{C}^2$,  these discs cannot intersect at an isolated point. Therefore, $f^p(\Delta^{\u}_{-p})$ and $\Delta^{\u}_{0}$ intersect along a common relatively open set, and the result follows.
\end{proof}

\subsection{Expansion/Contraction estimates -- Properties  (P5) and (P6)}

In this subsection we provide estimates on the action of $f$ and $f^{-1}$ along the plaques of the currents $T^\pm_{r,\bullet}$,  leading to the proof of Properties (P5) and (P6).

\begin{proposition} \label{prop:P5}
Let $\Delta^{\s}$ be a plaque of $ T^+_{r,-jp}$ for some $j=-1, 0,1,\ldots, \ell$,  which  is a graph over $\W_{2r,\eta_i}$ for $i=1$ or $2$.  Denote by $\widetilde{\Delta^{\s}} \subset \Delta^{\s}$ the corresponding graph over $\W_{r,\eta_i}$.  If $x \in \widetilde{\Delta^{\s}}$ and $v^{\s}_x$ is a unit vector tangent to $\widetilde{\Delta^{\s}}$ at $x$, then $$\|D f^p(x) \cdot v^{\s}_x\|  \leq   d^{-\alpha n/4} . $$
An analogous statement hold for the plaques $\Delta^{\u}$  of $ T^-_{r,-jp}$ by replacing $f$ and $f^{-1}$.  Therefore,  Properties  (P5) and (P6) hold.
\end{proposition}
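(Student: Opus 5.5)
The idea is to use the area condition $\Area(f^j(\Delta)) \leq d^{-j/2}$ built into Definition \ref{def:good-plaques1}, pass it to the limit plaques, and turn area smallness of $f^p(\Delta^{\s})$ into a derivative bound through the same Briend--Duval diameter estimate and Cauchy estimates used in Lemma \ref{lemma:slope0}.

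\textbf{Step 1: area control on the limit plaques.} A plaque $\Delta^{\s}$ of $T^+_{r,-jp}$ is a Hausdorff limit of components $\Delta_l \in \Grm^{i,+}_{r,m_l+jp}$. Each of these satisfies $\Area(f^k(\Delta_l)) \leq d^{-k/2}$ for all $0 \le k < m_l+jp$, and in particular for $k=p$ once $l$ is large. The $\Delta_l$ are graphs over $\W_{2r,\eta_i}$ with uniformly bounded area, so they converge as holomorphic graphs (Hurwitz/Bishop). Since $f^p$ is a fixed holomorphic map, $f^p(\Delta_l)\to f^p(\Delta^{\s})$ locally uniformly, and area is lower semicontinuous under this convergence. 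Hence $\Area(f^p(\Delta^{\s})) \le d^{-p/2} \le d^{-3\alpha n}$, using $p \ge 6\alpha n$.

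\textbf{Step 2: diameter estimate.} Consider the annulus $A = \Delta^{\s} \setminus \widehat{\Delta^{\s}}$, where $\widehat{\Delta^{\s}}$ is the part of the graph over $\W_{3r/2,\eta_i}$. Its modulus $M$ is a universal constant, independent of $r$. Its image $f^p(A)$ is an annulus of the same modulus, since $f^p$ is injective. The Briend--Duval estimate from \cite[Appendice]{briend-duval:ihes} then gives
\[
\diam\big(f^p(\widehat{\Delta^{\s}})\big)^2 \le c M^{-1} \Area\big(f^p(\Delta^{\s})\big) \le C d^{-3\alpha n}.
\]

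\textbf{Step 3: derivative bound.} Parametrize $\Delta^{\s}$ by $z \mapsto (z,\varphi(z))$ and set $g = f^p\circ(z,\varphi(z))$. On $\W_{3r/2,\eta_i}$, $g$ minus its value at the center has sup-norm at most $Cd^{-3\alpha n/2}$. For $z \in \W_{r,\eta_i}$, the disc of radius $r/2$ about $z$ lies in $\W_{3r/2,\eta_i}$, so Cauchy's estimate gives $|g'(z)| \le C r^{-1} d^{-3\alpha n/2} = C d^{-\alpha n/2}$, since $r = d^{-\alpha n}$. The unit tangent vector is $v^{\s}_x = (1,\varphi'(z))/\|(1,\varphi'(z))\|$, and $\|(1,\varphi')\|\ge 1$, so
\[
\|Df^p(x)v^{\s}_x\| \le |g'(z)| \le C d^{-\alpha n/2} \le d^{-\alpha n/4}
\]
for $n$ large. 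Note that the slope bound of Lemma \ref{lemma:slope0} is not even needed here, because normalizing by $\|(1,\varphi')\|$ only helps.

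\textbf{Remaining cases.} For $T^+_{r,-n}$ the same argument applies with $p+s$ in place of $p$, since $d^{-(p+s)/2} \le d^{-p/2}$. For the unstable plaques, replace $f$ by $f^{-1}$: the plaques of $T^-_{r,\bullet}$ come from $f^{m}(L)$ with the symmetric area condition. The case $j=-1$ is also covered, since the level $m_l - p$ still exceeds $p$ for $l$ large.

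\textbf{Main obstacle.} The delicate point is Step 1: checking that the finite-level area condition at iterate $p$ survives the passage to the limit. This requires uniform convergence of $f^p$ on the relevant graphs, together with the fact that the indices $m_l + jp$ exceed $p$.
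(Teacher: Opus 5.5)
Your proposal is correct and follows essentially the same route as the paper. The shared steps are the area bound $\Area(f^p(\Delta^{\s}))\le d^{-p/2}$, the Briend--Duval diameter estimate applied to the image of the fixed-modulus annulus over $\W_{2r,\eta_i}\setminus \W_{3r/2,\eta_i}$, Cauchy estimates on $\W_{r,\eta_i}$ giving $\|D(f^p\circ\widetilde\varphi)\|\lesssim r^{-1}d^{-p/4}$, and the fact that the projected tangent vector has norm at most one. Your Step 1, which passes the area condition to the limit plaques via weak convergence of the transverse measures and lower semicontinuity of area, makes explicit what the paper asserts ``by the construction of $T^+_{r,-p}$''.
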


\begin{proof}
Again,  we prove the result only for $j=1$.  Fix  $\Delta^{\s}$ as in the statement.  Without loss of generality,  we can assume that $\Delta^{\s}$ is a graph over the first coordinate axes.  Let $\widetilde{\Delta^{\s}} \subset \widehat{\Delta^{\s}} \subset \Delta^{\s}$ be as in the proof of Lemma \ref{lemma:slope0}.  By the construction of $T^+_{r,-p}$, we have that  $\Area(f^p(\Delta^{\s})) \leq d^{-p/2}$.  Let $M$ be the modulus of $f^p(\Delta^{\s}) \setminus f^p(\widehat{\Delta^{\s}})$ which is the same as the one of  $\Delta^{\s} \setminus \widehat{\Delta^{\s}}$.  Arguing as in the proof of Lemma \ref{lemma:slope0} we get that $\diam (f^p(\widehat{\Delta^{\s}} ) )  \leq M^{-1/2 } d^{-p/4}$.  Denote $\widetilde \varphi(z):=(z,\varphi(z))$.  The above estimate together with  Cauchy's formula yields
$$\|D(f^p \circ \widetilde \varphi)(z)\| \leq c_0 r^{-1} d^{-p/4}, \quad \text{for} \,\,   z \in \W_{r,\eta_1}$$
for some constant $c_0 > 0$ independent of $r$,  $p$ and $n$.

Since $\widetilde \varphi$ is a biholomorphism there exists a vector $w$ tangent to $ \W_{r,\eta_1}$ at $z$ such that $v^{\s}_x= D\widetilde \varphi(z) \cdot w$.  As $x = \widetilde \varphi(z)$ the chain rule and the above inequality give  
$$\|D f^p(x) \cdot v^{\s}_x\| = \|D f^p(x) ( D\widetilde \varphi(z) \cdot w ) \|  = \|D(f^p \circ \widetilde \varphi)(z) \cdot w \| \leq c_0 r^{-1} d^{-p/4} \|w\|.$$

Now,  $w = D\pi_1(x) \cdot v^{\s}_x$ and $\|D\pi_1(x)\| \leq 1$ because $\pi_1$ is a linear projection,   so $\|w\| \leq \| v^{\s}_x\| \leq 1$.  Since $p = \ceil{6\alpha n}$,  $r = d^{-\alpha n}$,  $\alpha >0$ is small and $n$ is large,   we conclude that $\|D f^p(x) \cdot v^{\s}_x\| \leq c_0 d^{\alpha n - 3\alpha n/2} \leq d^{-\alpha n/4}$,  thus giving the desired result.
\end{proof}

\subsection{The set $\overline \Bc^*$ and the angle between stable and unstable directions -- Properties (P7) and (P8)
}

In this subsection we show that there exist a set $\overline \Bc^*$ of large measure over which the angle  between stable and unstable directions can be controlled. This will prove   Properties (P7) and (P8). Recall that $\overline \mu_r = \overline \mu_{r,0}$.

\begin{lemma} \label{lemma:mass-B*}
Let $\overline \Bc_\bullet:= \supp \overline \mu_{r,\bullet}$ and $$\overline \Bc^*:=   f^{-p}(\overline \Bc_{p}) \cap \overline \Bc_0 \cap f^p(\overline \Bc_{-p}) \cap \cdots \cap f^{\ell p}(\overline \Bc_{-\ell p}) \cap f^n(\overline \Bc_{-n}).$$
Then $\overline \mu_r(\overline \Bc^*) \geq 1 - Cr^\gamma$.
   \end{lemma}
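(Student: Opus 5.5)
The plan is a union bound over the $\ell+3$ sets in the intersection, using $f$-invariance of $\mu$ and Property (P3) for every level. Write the complement as
$$\big(\overline \Bc^*\big)^c \subset f^{-p}(\overline \Bc_p)^c \cup \overline \Bc_0^c \cup \bigcup_{j=1}^{\ell} f^{jp}(\overline \Bc_{-jp})^c \cup f^n(\overline \Bc_{-n})^c .$$
We then bound the $\overline \mu_r$-mass of each piece. Since $\overline \mu_r \leq \mu$, it is enough to bound $\mu$ of each piece. For a level $-jp$ (or $p$, or $-n$), invariance gives $\mu\big(f^{jp}(\overline \Bc_{-jp})^c\big) = \mu\big(\overline \Bc_{-jp}^c\big)$. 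By (P3), $\overline \mu_{r,-jp} \leq \mu$ and $\|\mu - \overline \mu_{r,-jp}\| \leq C r^\gamma$. Because $\overline \mu_{r,-jp}$ gives no mass to the complement of its support, we get $\mu(\overline \Bc_{-jp}^c) \leq C r^\gamma$. The term for $\overline \Bc_0$ is handled the same way.

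Summing these gives $\overline \mu_r\big((\overline \Bc^*)^c\big) \leq (\ell+3) C r^\gamma$. Also $\overline \mu_r$ has mass at least $1 - Cr^\gamma$ by Proposition~\ref{prop:geometric-intersection0}. Together these yield $\overline \mu_r(\overline \Bc^*) \geq 1 - (\ell+4)Cr^\gamma$.

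The main obstacle is the factor $\ell+3$. Since $p = \ceil{6\alpha n}$, we have $\ell \leq n/p \leq 1/(6\alpha)$, a constant independent of $n$ and $r$. So $(\ell+4)C$ is again a constant, and the bound has the claimed form $1 - C r^\gamma$ with $C$ depending only on $\alpha$ and $f$.

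One point needs checking: the constant in (P3) must be uniform across the levels $-jp$, $p$ and $-n$. This holds because each level is built by the same procedure as in Proposition~\ref{prop:geometric-intersection0}, with $f^{-m-jp}(L)$ in place of $f^{-m}(L)$. The constants there come only from Lemma~\ref{lemma:dujardin-fubini}, the mass estimate \eqref{eq:mass-T-T_r} of Proposition~\ref{prop:dujardin}, and the Hölder continuity of the quasi-potentials. None of these depends on the shift $jp$.

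A second point to verify is the measurability of the images $f^{jp}(\overline \Bc_{-jp})$. This is automatic, since $f$ is a homeomorphism and the supports are closed. If the statement is read with $\mu_r$ normalized to a probability, the same bound still follows.
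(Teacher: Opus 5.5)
Your proposal is correct and is essentially the paper's argument: a union bound over the sets in the intersection, then $\overline \mu_r \leq \mu$ together with $f$-invariance of $\mu$, then (P3) combined with the fact that $\overline \mu_{r,\bullet}$ gives no mass to the complement of its support. You also make explicit two points the paper leaves implicit. First, $\overline \mu_r$ has mass at least $1 - Cr^\gamma$. Second, the number of terms is bounded independently of $n$ (since $\ell + 1 \leq n/p \leq 1/(6\alpha)$), so the factor $\ell+3$ is indeed absorbed into the constant.
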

   
\begin{proof}
Using that $\overline \mu_r \leq \mu$ and the fact that $\mu$ is invariant by $f$, we can bound $\overline \mu_r((\overline \Bc^*)^c)$ from above by $\sum_{j=-1}^\ell \mu(\overline \Bc_{-jp}^c)  +  \mu(\overline \Bc_{-n}^c)$. Then, for each term in the sum, we write $\mu = (\mu - \overline \mu_{r,-jp}) + \overline \mu_{r,-jp}$ and similarly for the last one. Using that $\overline \mu_{r,-jp}(\overline \Bc_{-jp}^c) = 0$ and that $\|\mu - \overline \mu_{r,-jp}\| \leq C r^\gamma$ (Property (P3)) we get the result.
\end{proof}

   Observe that, by definition,  all the currents $T^\pm_{r,\bullet}$ are uniformly laminar in a neighborhood of every point $x \in \overline \Bc^*$.  In particular, through each such point, there passes a unique plaque of each of these currents. Recall that the constant $\kappa$ is defined in \eqref{eq:kappa-def}.
    
\begin{proposition}
Let $\overline \Bc^*$ be the above set and fix $x \in \overline \Bc^*$. Let $j=-1,0,1,\ldots,\ell$ and denote by $\Delta^{\u}_{-jp}$ (resp.~ $\Delta^{\s}_{-jp}$) the plaque of $T^-_{r,-jp}$ (resp.~ $T^+_{r,-jp}$)  through $x$.  Then  $\measuredangle (\Tan_x \Delta^{\s}_{-jp},\Tan_x \Delta^{\u}_{-jp}) > d^{-\kappa}r^{6 \kappa}$.  In particular $\measuredangle (\Tan_x \Delta^{\s}_{-jp},\Tan_x \Delta^{\u}_{-jp}) > \rho^{7}$.  In other words, Property (P8) holds.
\end{proposition}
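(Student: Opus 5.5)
We argue via the opposing growth of tangent vectors along stable and unstable plaques. Fix $x \in \overline \Bc^*$ and $j$, and write $v^{\s}$, $v^{\u}$ for unit vectors spanning $\Tan_x \Delta^{\s}_{-jp}$ and $\Tan_x \Delta^{\u}_{-jp}$. Since $x \in \overline\Bc^*$, the point $x$ lies in the support of $\overline \mu_{r,-jp}$, which sits in $(1-(2r)^\gamma)P_r$. So $x$ belongs to the "inner" part $\widetilde{\Delta^{\s}}$, $\widetilde{\Delta^{\u}}$ of the plaques, where (P5) and (P6) apply. (If a larger subcell is needed, one uses the relation between $\W_{r,\eta_i}$ and the $2r$-cells, which we take to follow from the construction.)

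By Proposition \ref{prop:P5}, $\|Df^p(x)v^{\s}\| \le d^{-\alpha n/4}$. For the unstable direction we need expansion under $f^p$, not contraction under $f^{-p}$. To get it, we use the compatibility of plaques (P4) together with membership $x \in f^{-p}(\overline\Bc_{p})$ when $j=0$, and the analogous memberships for other $j$ (for instance $x\in f^{jp}(\overline\Bc_{-jp})$ combined with the neighbouring levels). These give that $y = f^p(x)$ lies on a plaque $\Delta^{\u}_{-jp+p}$ of $T^-_{r,-jp+p}$, at a point of its inner part. By (P4), $f^p(\Delta^{\u}_{-jp})$ and $\Delta^{\u}_{-jp+p}$ agree near $y$, so $Df^p(x)v^{\u}$ is tangent to $\Delta^{\u}_{-jp+p}$ at $y$. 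Applying (P6) at $y$ to the unit vector $w = Df^p(x)v^{\u}/\|Df^p(x)v^{\u}\|$ gives $\|Df^{-p}(y) w\| \le d^{-\alpha n/4}$. Hence $\|Df^p(x)v^{\u}\| \ge d^{\alpha n/4}$.

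Now decompose $v^{\u} = a v^{\s} + u$ with $u \perp v^{\s}$, so that $\|u\| = \sin\measuredangle(v^{\s},v^{\u}) =: \theta$ and $|a|\le 1$. By \eqref{eq:kappa-def} and the fact that the orbit stays in $\overline{\D(0;R)^2}$ (points of the support of $\mu$ lie in $J^*$), we have $\|Df^p\| \le d^{\kappa p}$ along the orbit. Then
$$d^{\alpha n/4} \le \|Df^p(x)v^{\u}\| \le |a|\,\|Df^p(x)v^{\s}\| + \|Df^p(x)\|\,\theta \le d^{-\alpha n/4} + d^{\kappa p}\theta.$$
This gives $\theta \ge (d^{\alpha n/4}-d^{-\alpha n/4}) d^{-\kappa p} \ge d^{-\kappa p}$ for $n$ large. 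Since $p \le 6\alpha n + 1$, we get $d^{-\kappa p} \ge d^{-\kappa} d^{-6\kappa\alpha n} = d^{-\kappa} r^{6\kappa}$. The second claim follows from $d^{-\kappa}r^{6\kappa} > r^{7\kappa} = \rho^7$, which holds because $r^{\kappa} = d^{-\alpha n\kappa} < d^{-\kappa}$ once $\alpha n>1$. (A strict inequality may need a tiny margin; we take it from the factor $1-d^{-\alpha n/2}$ in the previous bound.)

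The main obstacle is the unstable expansion step. It requires checking carefully that $f^p(x)$ lies in the support at level $-jp+p$, in the inner graph region of that plaque. It also requires that the identification of $f^p(\Delta^{\u}_{-jp})$ with $\Delta^{\u}_{-jp+p}$ near $f^p(x)$ is legitimate, i.e. that they do not meet only at a point, which (P4) rules out. For $j=-1$ the index bookkeeping is slightly different, so we would instead use contraction of $f^{-p}$ on the stable side symmetrically, with $v^{\u}$ contracted by $Df^{-p}$ and $v^{\s}$ expanded; the same estimate then follows with $f^{-1}$ in place of $f$.
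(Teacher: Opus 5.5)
Your proposal is correct and is essentially the paper's proof: (P5) gives contraction of $v^{\s}$ under $Df^p$, (P4) together with (P6) at $f^p(x)$ gives expansion of $v^{\u}$, and $\|Df^p(x)\|\le d^{\kappa p}$ with $p\le 6\alpha n+1$ then forces the angle above $d^{-\kappa}r^{6\kappa}$; the paper only uses the cruder bounds $\tfrac12$ and $2$ and the estimate $\|v^{\s}-v^{\u}\|\le\sqrt2\,\theta$ in place of your orthogonal decomposition, which amounts to the same computation. The one point to tidy is your backward treatment of $j=-1$, which also needs $\kappa$ to bound $\|Df^{-1}\|$ on $\overline{\D(0;R)^2}$; \eqref{eq:kappa-def} does not give this literally, but enlarging $\kappa$ fixes it harmlessly, and the paper itself simply calls that case analogous.
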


\begin{proof}
We only prove the result for $j=0$, the other cases being analogous. Fix $x \in \overline \Bc^*$.

\smallskip

\textit{Claim.} If $v^{\s} \in \Tan_x \Delta^{\s}_{0}$ is a unit  vector, then  $\|Df^p(x) \cdot v^{\s}\| \leq \frac12$. If $v^{\u} \in \Tan_x \Delta^{\u}_{0}$ is a unit  vector, then  $\|Df^p(x) \cdot v^{\u}\| \geq 2$.  

\smallskip

\textit{Proof of claim.} The first inequality follows directly from the first part of Proposition \ref{prop:P5} and the fact that $n$ is large. For the second inequality, by the definition of $\overline \Bc^*$ there exists $z \in \overline \Bc_{p}$ such that $x=f^{-p}(z)$,  so $f^p(x) = z$.  It follows from (P4) that  $w^{\u} = Df^p(x) \cdot v^{\u}  \in \Tan_z \Delta^{\u}_{p}$, where $\Delta^{\u}_{p}$ is the plaque of $T^-_{r,-p}$ through $z$.  The second part of Proposition \ref{prop:P5} gives that $\|Df^{-p}(z) \cdot w^{\u}\| \leq \frac12 \|w^{\u}\|$. Hence
\begin{align*}
1 &= \|v^{\u}\| =   \|D (f^{-p} \circ f^p) (x) \cdot  v^{\u}\| = \|D (f^{-p})(z)  Df^p (x)  \cdot  v^{\u}\| = \|Df^{-p}(z) \cdot w^{\u} \| \\ 
&\leq \frac12 \|w^{\u}\| = \frac12 \| Df^p(x) \cdot v^{\u}\|,
\end{align*}
so $\|Df^p(x) \cdot v^{\u}\| \geq 2$,  thus proving the claim.  $\square$ \smallskip

By definition, the angle between two complex lines $L$ and $L'$ is the minimum of the angles between two real lines on $L$ and $L'$ respectively,  see \cite[A3.2]{chirka}.  Assume,  by contradiction,  that $\measuredangle (\Tan_x \Delta^{\s}_{0},\Tan_x \Delta^{\u}_{0}) \leq d^{-\kappa}r^{6 \kappa}$. By the definition of $\kappa$  and the chain rule we have that $\|Df^p(x)\| \leq d^{p \kappa}$.  Let $v^{\s} \in \Tan_x \Delta^{\s}_{0}$ and $v^{\u} \in \Tan_x \Delta^{\u}_{0}$ be two unit vectors whose angle is less than $d^{-\kappa} r^{6\kappa}$.  In particular $\|v^{\s}-v^{\u}\| \leq \sqrt 2 d^{-\kappa}r^{6 \kappa}$.  Then,  the above claim and the reverse triangle inequality  give $$\frac32 = 2 - \frac12 \leq \|Df^p(x) \cdot v^{\u}\| - \|Df^p(x) \cdot v^{\s}\| \leq \|Df^p(x) \cdot (v^{\u} - v^{\s})\| \leq \sqrt 2 d^{p \kappa} d^{-\kappa}r^{6 \kappa}.$$
But $p= \ceil{6 \alpha n}$ and $r=d^{-\alpha n}$,  so $\frac32 \leq \sqrt 2 d^{6 \alpha n \kappa} d^{-6 \alpha n \kappa} = \sqrt2$ a contradiction.   This proves the first inequality of the proposition. The second inequality is straightforward, because $r=d^{-\alpha n}$ and $n$ is large,  so $d^{-\kappa}r^{6 \kappa} > r^{7 \kappa} = \rho^7$.
\end{proof}

As a consequence of the above estimate one can show that the stable and unstable discs do not have extra intersection points in a cell of controlled size.   Given a plaque $\Delta$ parametrized by a holomorphic function $\varphi: \W_{2r,\eta_i} \to \C$,  we say that $\Delta$ intersects a given set $E$ before time $T >0$ if there exists $z \in \W_{2r,\eta_i} $ such that $|z - p_{\eta_i}| < T$ and $\varphi(z) \in E$.

\begin{corollary}
Let $\overline \Bc^*$ be the above set and fix $x \in \overline \Bc^*$.  In the notation of the last proposition,  the discs $\Delta^{\u}_{-jp}$ and $\Delta^{\s}_{-jp}$ do not meet before time $\rho^{10}$ except at $x$.
\end{corollary}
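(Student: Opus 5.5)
The plan is to compare each disc with its tangent line at $x$, using the $C^2$ bounds of Lemma \ref{lemma:slope0}, and then to use the angle bound of Property (P8). Write $\Delta^{\s}_{-jp}$ as a graph $z\mapsto \widetilde\varphi^{\s}(z)$ over $\W_{2r,\eta_i}$ and $\Delta^{\u}_{-jp}$ as a graph $z\mapsto\widetilde\varphi^{\u}(z)$ over $\W_{2r,\eta_{i'}}$. The projections $i$ and $i'$ may differ. Since every plaque is a limit of plaques in $\Grm^{i,\pm}_{r,m}$, the bounds $\|\varphi'\|\le c_0r^{-1}$ and $\|\varphi''\|\le c_0 r^{-2}$ pass to the limit plaques on $\W_{r,\eta_i}$. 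Taylor's formula then shows that, on the part of each disc parametrized by $|z-z_x|<t$ with $t \le \rho^{10}$, the disc lies within distance $C r^{-2}t^2$ of the affine complex line $x+\Tan_x\Delta^{\bullet}$.

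The key step is a transversality estimate. Let $y\neq x$ be a common point, with $y = x + a + O(r^{-2}|a|^2)$ along the stable tangent line and $y = x+ b + O(r^{-2}|b|^2)$ along the unstable one, where $a\in \Tan_x\Delta^{\s}$ and $b\in \Tan_x\Delta^{\u}$. The slopes are at most $c_0 r^{-1}$, so $|a|,|b|\lesssim r^{-1}t$ for parameter time $t$. The angle between the two tangent complex lines exceeds $\rho^7$ by (P8). Hence $\|a-b\|\ge \sin(\rho^7)\max(|a|,|b|)\gtrsim \rho^7\max(|a|,|b|)$. On the other hand, $\|a-b\|\le C r^{-2}(|a|^2+|b|^2)$. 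Combining the two gives $\max(|a|,|b|)\gtrsim \rho^{7}r^{2}$. Since the graph map is injective, $a$ and $b$ cannot both vanish when $y\neq x$, so this lower bound is meaningful.

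To finish, translate back to parameter time. The projection is 1-Lipschitz and $|z-z_x| \ge |a|/\sqrt{1+c_0^2r^{-2}}\gtrsim r|a|$, so an extra intersection point needs parameter distance $\gtrsim \rho^7 r^3 = r^{7\kappa+3}$ from $x$. This is larger than $2\rho^{10}=2r^{10\kappa}$ because $\kappa>1$ and $n$ is large. Any extra intersection before time $\rho^{10}$ would be at parameter distance at most $\rho^{10}+|z_x - p_{\eta_i}|$ from $z_x$. I will need to check that this is the right reading of ``before time''. If $z_x$ is not close to the center $p_\eta$, I will reinterpret it as time measured from $x$'s parameter, which is presumably what is intended.

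The main obstacle is a Taylor remainder that is uniform in the parameter. One must make sure the $C^2$ bound is applied where valid: either $x\in\widetilde\Delta$, or a bound on $\W_{3r/2}$ is available (obtained again by Cauchy estimates). One must also handle the case $i\neq i'$, since the two discs are parametrized over different axes. For that case I will express both remainders in $\C^2$ in terms of displacements, as done above, rather than in terms of parameters.
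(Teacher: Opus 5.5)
Your argument is correct and is the natural way to fill in what the paper only asserts: the paper gives no proof beyond saying the corollary follows from the angle bound $\measuredangle(\Tan_x\Delta^{\s}_{-jp},\Tan_x\Delta^{\u}_{-jp})>\rho^7$ of (P8). Your route is the one the paper has in mind. Transversality at $x$ combined with the $C^2$ bounds of Lemma~\ref{lemma:slope0} forces any extra intersection point to be far from $x$, at parameter distance $\gtrsim\rho^7r^3\gg\rho^{10}$ since $\kappa>1$. Measuring time from the parameter of $x$ is the right reading, consistent with the paper's reformulation via the $\rho^{10}$-tiling.

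One small tightening is needed. The bound $\max(|a|,|b|)\gtrsim\rho^7 r^2$ only controls the parameter of the disc realizing the maximum, so run the last step with $|y-x|$ instead. For $|y-x|\le r^2/c_0$ the Taylor remainders give $\tfrac12|y-x|\le|a|,|b|\le 2|y-x|$, hence $|y-x|\gtrsim\rho^7r^2$. Both parameters are at least $|y-x|/\sqrt{1+c_0^2r^{-2}}$, so this bounds them simultaneously.
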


The above statement can be restated as follows: after replacing the tiling by the cells $\W^2_{r,\eta}$ with the finer tiling given by $\W^2_{\rho^{10},\eta}$, each cell contains at most one intersection point of the stable and unstable plaques associated with the currents $T^\pm_{r,\bullet}$.

\subsection{Working with a single projection -- Properties (P9) and (P10)}

In this subsection, we show that the use of the two coordinate projections $\pi_1$ and $\pi_2$ can be replaced, up to a controlled loss, by a single projection onto a generic line,  relative to which all plaques of our currents are graphs with controlled slope.  This step is necessary to define suitable graph transform with good quantitative properties,  see Section \ref{sec:graph-transform}.

We begin with an elementary lemma.  Fix $\mathtt p\in \mathbb C^2$ and two linearly independent lines $L_1,L_2\subset \mathbb C^2$. 
The splitting $\mathbb C^2=L_1\oplus L_2$ induces a natural affine coordinate system on $\mathbb C^2$ centered at $\mathtt p$. 
If the center is changed from $\mathtt p$ to another point $\mathtt p'$, the resulting coordinate systems differ by a translation. 
Consequently, if a holomorphic disc $\Delta$ is represented in one such coordinate system as a graph $(z,\varphi(z))$, then in the other coordinate system it is represented as $(t,\psi(t))$, where $\varphi$ and $\psi$ differ by a constant.  In particular, $\Lip(\varphi)=\Lip(\psi)$, so the Lipschitz constant depends only on the splitting $\mathbb C^2=L_1\oplus L_2$, and not on the choice of center. 

\begin{lemma}  \label{lemma:frames}
Fix $0<\rho<1$.  Let $\Delta$ a holomorphic disc in $\C^2$ which is a graph over a square in $\C$ of side $4r$.  Fix $\mathtt{p} \in \Delta$ and let $L$ be a line through $\mathtt{p}$ such that $\measuredangle (\Tan_{\mathtt{p}} \Delta, L) \geq \rho^{10}$.  

\begin{enumerate}
\item   Denote by  $(z,w)$ an affine holomorphic coordinate system induced by the splitting $\C^2 = \Tan_{\mathtt{p}} \Delta \oplus L$.  Then $\Delta$ is a graph of the form $(z,\psi(z))$ over a square of size $\frac{1}{4} \rho^{14}$ with $\Lip(\psi) \leq r$.

\item Let $L$ be as above and denote by  $(t,s)$ an affine holomorphic coordinate system induced by the splitting $\C^2 = L^\perp \oplus L$.  Then $\Delta$ is a graph of the form $(t,\varphi(t))$ over a square of size $\frac{1}{32} \rho^{24}$ with $\Lip(\varphi) \leq \rho^{-10}$.
\end{enumerate}
\end{lemma}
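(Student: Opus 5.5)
The plan is to work in the graph coordinates from the proof of Lemma \ref{lemma:slope0}. We write $\Delta=\{(z,\varphi_0(z))\}$ over a square of side $4r$ and use the bounds $|\varphi_0'|\le c_0r^{-1}$ and $|\varphi_0''|\le c_0r^{-2}$ on the concentric square of side $2r$. We take $\mathtt p$ to lie over this inner square, as is the case for the plaques to which the lemma will be applied. This leaves a margin of size $\gtrsim r$, which is enormous compared with $\rho^{14}$, since $\rho=r^\kappa$ and $\kappa>1$.

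The first step is to control how fast the tangent line of $\Delta$ turns near $\mathtt p$. Since $\varphi_0''$ is bounded by $c_0r^{-2}$, for $q\in\Delta$ with $|q-\mathtt p|\le\delta$ the angle between $\Tan_q\Delta$ and $\Tan_{\mathtt p}\Delta$ is at most $Cr^{-2}\delta$. The tangent direction $(1,\varphi_0')$ has norm at least $1$, so normalizing does not amplify the variation.

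For (1), consider the linear change of coordinates $A$ adapted to $\Tan_{\mathtt p}\Delta\oplus L$. Because the angle between the two lines is at least $\rho^{10}$, the norm of $A$ is at most $C\rho^{-10}$. Let $P$ denote the projection along $L$ onto $\Tan_{\mathtt p}\Delta$, and parametrize $\Delta$ by arclength-comparable parameters near $\mathtt p$. Then $P|_\Delta$ has derivative within $C\rho^{-10}r^{-2}\delta$ of an isometry on $\{|q-\mathtt p|\le\delta\}$. We choose $\delta=\rho^{14}$. The error is then $C\rho^{4}r^{-2}=Cr^{4\kappa-2}\le r/2$ for $n$ large, because $4\kappa-2>1$. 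A standard inverse-function argument (a holomorphic map whose derivative is close to an isometry on a disc is injective there, and its image contains the disc of half the radius) shows that $\Delta$ is a graph $w=\psi(z)$ over the disc of radius $\rho^{14}/2$, hence over a square of side $\frac14\rho^{14}$. The slope $|\psi'|$ at a point is bounded by the $L$-component, in $A$-coordinates, of the tangent direction, which is at most $C\rho^{-10}r^{-2}\rho^{14}\le r$. This gives $\Lip(\psi)\le r$.

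For (2), we start from the representation in (1) and change the splitting from $\Tan_{\mathtt p}\Delta\oplus L$ to $L^\perp\oplus L$, keeping the second factor $L$ fixed. In the $(t,s)$-coordinates, $\Tan_{\mathtt p}\Delta$ is the graph of a linear map $s=at$ with $|a|=\cot\theta\le\rho^{-10}$ (up to harmless constants), where $\theta\ge\rho^{10}$ is the angle between $\Tan_{\mathtt p}\Delta$ and $L$. The map $z\mapsto t$ is the orthogonal projection of $\Tan_{\mathtt p}\Delta$ onto $L^\perp$, which scales lengths by $\sin\theta\gtrsim\rho^{10}$. The extra displacement $\psi(z)$ lies in $L$, so $t$ depends only on $z$. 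It follows that the image in $L^\perp$ of the square of side $\frac14\rho^{14}$ contains a square of side $\frac18\rho^{10}\cdot\frac14\rho^{14}=\frac1{32}\rho^{24}$. On this square $\Delta$ is the graph of $\varphi(t)=at+\psi(z(t))$ in suitable units, so
\[\Lip(\varphi)\le |a|+\Lip(\psi)\,|dz/dt|\le \cot\theta + r\rho^{-10}.\]

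The main obstacle I expect is this last bookkeeping of constants. Getting exactly $\rho^{-10}$, rather than $C\rho^{-10}$, requires absorbing $r\rho^{-10}$ and the $\sin$/$\cot$ comparisons into the gap between $\cot\theta$ and $\rho^{-10}$. I would handle this by running the argument with the slightly stronger hypothesis that $\theta$ is comparable to $2\rho^{10}$, or by using that $n$ is large to absorb the constants, as the paper does elsewhere. The geometric content is only the turning estimate from $\varphi_0''$ and the linear algebra of the two splittings.
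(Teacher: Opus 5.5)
The paper states this lemma without proof (it is introduced as ``elementary''), so there is no argument of the authors to compare yours with. Your route is the natural one and is correct in substance. The bounds of Lemma~\ref{lemma:slope0} control how fast $\Tan_q\Delta$ turns. The projection along $L$ is then a small perturbation of a similarity on a $\rho^{14}$-neighbourhood of $\mathtt p$, because $\rho^{-10}r^{-2}\rho^{14}=r^{4\kappa-2}\ll r$. Part (2) is linear algebra, since $t$ depends only on $z$ and $|dt/dz|=\sin\theta$. You correctly import the relation $\rho=r^\kappa$ and the second-derivative control. Both are genuinely needed: with an arbitrary $\rho$ and an arbitrary graph over a square, the statement is false.

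One small imprecision: $(1,\varphi_0')$ may have norm of order $r^{-1}$, so ``arclength-comparable parameters'' is not quite right. Instead, use the original graph parameter and compare $F'$ with $F'(\mathtt z)$, where $F$ is the parametrization followed by the projection. Since $|F'(\mathtt z)|\ge 1$, the relative error is still $\lesssim \rho^{-10}r^{-2}\delta$ and the rest goes through.

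Two points need correcting.

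First, the position of $\mathtt p$. In the proof of Proposition~\ref{prop:one-projection}, the lemma is applied at a point $x\in\Delta\cap(1-(2r)^\gamma)\overline{\W}^2_{2r,\eta}$. This point need not lie over the inner square $\W_{r,\eta_i}$ where Lemma~\ref{lemma:slope0} gives its bounds; it is only at distance about $r^{1+\gamma}$ from the boundary of the base square. So ``as is the case for the plaques to which the lemma will be applied'' is not accurate. The fix is to rerun the Briend--Duval and Cauchy argument on the annulus between the discs of radii $r^{1+\gamma}/2$ and $r^{1+\gamma}$ around $\pi_i(x)$; this annulus has fixed modulus and area at most $1$. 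This gives $|\varphi_0''|\lesssim r^{-2-2\gamma}$ near $\pi_i(x)$. Your exponent condition becomes $4\kappa-2-2\gamma>1$, which still holds since $\kappa>1$ and $\gamma<1/3$.

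Second, the constant in (2). In the orthonormal coordinates $(t,s)$ one has exactly $|\varphi'(t)|=\cot\theta_q$, where $\theta_q=\measuredangle(\Tan_q\Delta,L)$. On the piece you construct, $\theta_q$ can be smaller than $\theta_{\mathtt p}$ by an amount of order $r^{-2}\rho^{14}$. So in the borderline case $\theta_{\mathtt p}=\rho^{10}$, the literal bound $\Lip(\varphi)\le\rho^{-10}$ can fail by a multiplicative factor $1+O(\rho^{4}r^{-2})$. Taking $n$ large cannot absorb a multiplicative overshoot, so that remedy does not work. Your other remedy is the right one: either prove $\Lip(\varphi)\le 2\rho^{-10}$, or use the threshold $2\rho^{10}$ in the definitions of $E^\pm$, $B^\pm$ and $B$. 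Neither change affects anything downstream.
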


\begin{proposition} \label{prop:one-projection}
There exists a line $D$ in $\C^2$ and positive currents   $\T^\pm_{r,\bullet}$ with the following properties.
\begin{enumerate}
\item $\T^\pm_{r,\bullet} \leq T^\pm_{r,\bullet} \leq T^\pm$
\item $\T^\pm_{r,\bullet}$ are uniformly laminar
\item All plaques of $\T^\pm_{r,\bullet}$ are graphs with respect to the orthogonal projection $\pi_D: \C^2 \to D$ above squares of side $\rho^{25}$ whose slope is bounded from above by $1/\rho^{10}$.
\item $\|T^\pm - \T^\pm_{r,\bullet}\|_{P_{2r}} \leq 100 r^\gamma$ 
\item We have $0 \leq \T^+_{r} \dot{\wedge} \T^-_{r} \leq \mu_r$ and  $\|\mu -  \T^+_{r,\bullet}  \dot{\wedge}  \T^-_{r,\bullet} \| \leq C r^\gamma$

\end{enumerate}
\end{proposition}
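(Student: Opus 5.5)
The plan is to choose the line $D$ generically and to discard only those plaques whose tangent directions come too close to the direction being projected away. Fix a finite collection of candidate directions for $L$ (the kernel direction of $\pi_D$). For a plaque $\Delta$ of one of the currents $T^\pm_{r,\bullet}$ and a point $\mathtt p\in\Delta$, Lemma \ref{lemma:frames}(2) shows that, once $\measuredangle(\Tan_{\mathtt p}\Delta,L)\ge\rho^{10}$, the disc is a graph over $L^\perp=D$ on a square of side $\frac1{32}\rho^{24}\ge\rho^{25}$ with slope at most $\rho^{-10}$. So we take $D=L^\perp$ and let $\T^\pm_{r,\bullet}$ be the restriction of $T^\pm_{r,\bullet}$ to the union of the pieces of plaques where this angle condition holds, cut into graphs over a tiling of $D$ by squares of side $\rho^{25}$. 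This is done by restricting the measures $\nu^\pm_{r,\bullet}$ to sub-plaques. Properties (1)--(3) then follow directly: restriction preserves positivity and uniform laminarity, and it gives $\T^\pm_{r,\bullet}\le T^\pm_{r,\bullet}\le T^\pm$ by (P1).

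The main step, and the expected obstacle, is the mass estimate (4). Start from \eqref{eq:mass-T-T_r}, which gives $\|T^\pm - T^\pm_{r,\bullet}\|\lesssim r^2$ away from the street network, and from Lemma \ref{lemma:dujardin-fubini}, which bounds the trace mass near cell boundaries by $Cr^\gamma$. It then remains to bound the mass of the discarded pieces, that is, the set of points where $\Tan\Delta$ lies in the $\rho^{10}$-cone around $L$.

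For this I would average over $L\in\P^1$. The bad cone has Fubini--Study measure $O(\rho^{20})$ uniformly in the tangent direction. By Fubini, integrating the discarded mass over $L$ therefore gives $O(\rho^{20})\cdot\mathrm{mass}$, so some $L$ has discarded mass at most $C\rho^{20}\le r^\gamma$.

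Several currents must be handled simultaneously: $T^\pm_{r,-jp}$ for $j=-1,\dots,\ell$ together with $T^\pm_{r,-n}$. There are $O(n)$ of them, and $n\rho^{20}$ is still $\ll r^\gamma$ because $\rho=d^{-\alpha\kappa n}$ decays exponentially. A union bound over the averaging therefore yields a single common $D$.

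There is also a boundary loss: pieces near the edges of the $\rho^{25}$-squares in $D$ have to be dropped. I expect to control this loss, of order $\rho^{25\gamma}$, by the same Hölder/quasi-potential argument as in Lemma \ref{lemma:dujardin-fubini}, after also translating the tiling generically. Summing the three losses gives the bound $100r^\gamma$.

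For (5), the inequality $\T^+\dot\wedge\T^-\le T^+_r\dot\wedge T^-_r$ is immediate from the definition of the geometric intersection as a double integral over $\nu^+\otimes\nu^-$, since the new measures are dominated by the old ones. For the mass bound I would repeat the proof of Proposition \ref{prop:geometric-intersection0} verbatim, with $T^\pm_r$ replaced by $\T^\pm_{r,\bullet}$. The inputs there were (i) the mass estimate, now provided by (4), (ii) closedness of $T^\pm-\T^\pm$ on the region where the cut-off $\chi$ lives, and (iii) Hölder continuity of $u^\pm$.

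Point (ii) is delicate because the new plaques have boundaries inside the cells. I would handle it by choosing $\chi$ adapted to the finer tiling. The cost is $\ddc\chi\lesssim \rho^{-50(1+\gamma)}$, which can no longer be absorbed by $r^{\gamma^\pm}$. So instead of Stokes at scale $\rho$, I would bound directly the part of $\mu$ carried by the discarded pieces. Since $\mu\le T^+\wedge T^-$ and $\overline\mu_r$ is the geometric intersection, the mass of $\overline\mu_r - \T^+\dot\wedge\T^-$ is at most the $\overline\mu_r$-mass of intersection points lying on discarded plaques. I would control this by the same averaging over $L$, now carried out with respect to $\overline\mu_r$ and the tangent directions of both plaques at each point. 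This step is where I expect the real difficulty to lie.
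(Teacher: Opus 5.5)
Your proposal is correct and follows essentially the same route as the paper. The paper chooses $L$ outside the small bad sets $B^\pm$, $B$, obtained by Fubini on $\P^1$ plus a Markov inequality (Lemma \ref{lemma:fubini-P1}), and sets $D=L^\perp$, with Lemma \ref{lemma:frames}(2) giving the graph property. It gets (4) from Lemma \ref{lemma:dujardin-fubini}, \eqref{eq:mass-T-T_r} and the small bad-angle mass, and (5) from Proposition \ref{prop:geometric-intersection0} plus the small $\overline\mu_r$-mass of points where either plaque has a bad angle, with a single $D$ serving all the finitely many currents.

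Two simplifications are available:
\begin{enumerate}
\item The extra boundary loss at scale $\rho^{25}$ is not needed. A tile piece through a good point of $(1-(2r)^\gamma)P_{2r}$ is a full graph over its tile, since $\sqrt2\rho^{25}<\frac1{64}\rho^{24}$. The discarded pieces therefore lie in $E^\pm(L)$ or in the street network at scale $r$.
\item The step you flag as the real difficulty in (5) is just the same Fubini--Markov argument run with $\overline\mu_r$. This is exactly the role of the set $B$ in the paper.
\end{enumerate}
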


The conclusion of the above proposition is that Properties (P9) and (P10) hold.  As already observed in Remark \ref{rmk:inherit} the properties already obtained for $T^\pm_{r,\bullet}$ and inherited by $\T^\pm_{r,\bullet}$.

We begin some notations and a preliminary lemma.  We will detail the proof in  the case of $T^{\pm}_{r}$ since the argument works for the other currents.  Recall that for  $x \in \supp T^+_{r}$ there's a unique plaque $\widetilde{\Delta}^{\u}(x)$ of $ T^+_{r}$ passing through $x$.  Moreover $\widetilde{\Delta}^{\u}(x)$ is  a graph above a square of side $4r$ with respect to one of the two canonical projections.  Similarly for the plaques $\widetilde{\Delta}^{\s}(x)$ of $ T^-_{r}$.
 
 Consider the sets 
 $$E^\pm:= \{(x,L) \in \C^2 \times \P^1 : x \in \supp T^{\pm}_{r}  \, \text{ and }\,   \measuredangle (\Tan_{x}\Delta^{\u/\s}(x), L) < \rho^{10} \}$$ and for fixed $L \in \P^1$ let $E^\pm(L):= \{x \in \C^2 : (x,L) \in E^{\pm}\}$.  Denote by $\sigma^\pm_r:= T^\pm_r \wedge \omegaFS$ the trace measure of $ T^\pm_r $ and set 
 $$B^\pm:= \{L \in \P^1 : \sigma^\pm_r(E^\pm(L)) \geq \rho^{10} \}.$$
 
 Define also 
 $$E:=\{(x,L) \in \C^2 \times \P^1 : x \in \supp \mu^{\pm}_{r},  \,\,    \measuredangle (\Tan_{x}\Delta^{\u}(x), L) < \rho^{10} \,\,  \text{ or } \,\,   \measuredangle (\Tan_{x}\Delta^{\s}(x), L) < \rho^{10} \}$$
 and 
  $$B:= \{L \in \P^1 : \mu_r(E^\pm(L)) \geq \rho^{10} \}.$$
  
 \smallskip

Using the natural parallelism of $\C^2$,  we identify two parallel lines as being the same.  In this way,  the space of all lines in $\C^2$ is identified with $\P^1$.  In what follows, the Lebesgue measure on the space of lines in $\C^2$ is, by definition,  the probability measure induced by the Fubini-Study form on $\P^1$.  We will denote it by $\Leb$.

\begin{lemma} \label{lemma:fubini-P1}
Denote by $\Leb$ the uniform measure on $\P^1$.  There exists a constant $C>0$  such that $\Leb(B^\pm) \leq C \rho^{10}$ and $\Leb(B) \leq C \rho^{10}$.
\end{lemma}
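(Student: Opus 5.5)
The plan is a Fubini argument on $\C^2 \times \P^1$ combined with Markov's inequality. Consider the product measure $\sigma^+_r \otimes \Leb$ on $\C^2 \times \P^1$ and compute the mass of $E^+$ in two ways.

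\emph{Fibers over $x$.} For fixed $x \in \supp T^+_r$, the fiber $\{L : (x,L) \in E^+\}$ is the set of complex lines whose angle with the fixed line $\Tan_x \Delta^{\s}(x)$ is less than $\rho^{10}$. This is a Fubini--Study disc in $\P^1$ of radius comparable to $\rho^{10}$. Its Lebesgue measure is therefore at most $C_0 \rho^{20}$, with $C_0$ a universal constant. The only point to check is that the angle between complex lines, as in \cite[A3.2]{chirka}, is comparable to the Fubini--Study distance on $\P^1$; this holds because for complex lines the minimal real angle is the Fubini--Study angle.

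\emph{Fubini and Markov.} Measurability of $E^+$ follows from the continuity of $x \mapsto \Tan_x \Delta^{\s}(x)$ on the support of a uniformly laminar current. Fubini then gives
$$\int_{\P^1} \sigma^+_r(E^+(L))\, d\Leb(L) = \int \Leb(E^+_x)\, d\sigma^+_r(x) \leq C_0 \rho^{20} \|\sigma^+_r\|.$$
The mass $\|\sigma^+_r\|$ is bounded independently of $r$, since $T^+_r \leq T^+$ and $\supp T^+_r \subset \overline{\D(0;R)^2}$. Markov's inequality then yields
$$\Leb(B^+) \leq \rho^{-10} \cdot C_0 \rho^{20} \|\sigma^+_r\| \leq C \rho^{10},$$
which is in fact stronger than needed. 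The bound for $B^-$ is identical.

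\emph{The set $B$.} Here one uses $\mu_r$, a measure of mass at most $1$. The fiber of $E$ over $x$ is the union of two Fubini--Study discs of radius about $\rho^{10}$, one around $\Tan_x\Delta^{\u}(x)$ and one around $\Tan_x\Delta^{\s}(x)$. Its Lebesgue measure is thus at most $2C_0\rho^{20}$, and the same Fubini and Markov argument gives $\Leb(B) \leq C\rho^{10}$. In the definition of $B$ the fiber should be read as $E(L)$ rather than $E^\pm(L)$; if it is literally $E^\pm(L)$, one uses instead $\mu_r \leq \mu$ together with the fact that the plaques through points of $\supp \mu_r$ are those of $T^\pm_r$, and the same computation applies.

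\emph{Main obstacle.} The only delicate point is the well-definedness and measurability of the tangent-direction field on the supports, i.e.\ that through each point of $\supp T^\pm_r$ (or $\supp \mu_r$) there is a unique plaque. This comes from uniform laminarity over $(1-(2r)^\gamma)P_{2r}$; points outside this region can be discarded, or assigned an arbitrary direction, since the estimate only needs a measurable choice.
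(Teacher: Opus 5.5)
Your proposal is correct and follows the paper's proof essentially verbatim. You bound the $\Leb$-measure of each fiber $\{L : \measuredangle(\Tan_x\Delta, L) < \rho^{10}\}$ by $C\rho^{20}$, integrate against $\sigma^\pm_r$ (of mass at most one since $T^\pm_r \le T^\pm$) via Fubini, and apply Markov's inequality at level $\rho^{10}$, with the same argument for $B$ using $\mu_r$. One small slip: the resulting bound $\rho^{-10}\cdot C_0\rho^{20} = C_0\rho^{10}$ is exactly what is claimed, not stronger than needed.
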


\begin{proof}
We begin by observing that,  for fixed $L$,  the set  $\{L':  \measuredangle ( L' , L) < \rho^{10} \}$ has Lebesgue measure bounded by  $C \rho^{20}$ for some constant $C>0$.  Since $T^\pm_r \leq T^\pm$,  we have that $\sigma^\pm_r \leq T^\pm \wedge \omegaFS$ which is of mass one.  It follows from Fubini's theorem that $\big(\sigma^\pm_r \otimes \Leb \big) (E^\pm) \leq C \rho^{20}$.  By Fubini again,   $\int_{\P^1} \sigma^\pm_r(E^\pm(L)) \,  d\Leb(L) \leq C \rho^{20}$.  In particular,  for every $A>0$,  we have $\Leb (\{L:  \sigma^\pm_r(E^\pm( L )) \geq A \}) \leq A^{-1} C \rho^{20}$.  By taking $A= \rho^{10}$ we obtain the first inequality in the claim. The second inequality is obtained analogously.  
\end{proof}

\begin{proof}[Proof of Proposition \ref{prop:one-projection}]
 We will provide the construction of $\T^\pm_{r,0}:= \T^\pm_{r}$ from $T^\pm_{r,0}= T^\pm_{r}$ and prove its desired properties.  The other cases can be treated analogously,  starting from the other currents $T^\pm_{r,\bullet}$ .  By construction, $T^\pm_{r}$ is formed by graphs with respect to one of the two canoncial projections from $\C^2$ to $\C$.  We'll show that,  by considering a generic line  and by restricting the size of the squares in the base,  we can work with a single projection.  The proof will use Lemma \ref{lemma:frames} and the Fubini-type estimate of Lemma \ref{lemma:fubini-P1}. 
 
We keep the above notation.  Fix a line $L$ in the complement of $B^+ \cup B^- \cup B$.  Such a line exits because,  by  Lemma \ref{lemma:fubini-P1} we have $\Leb(B^+ \cup B^- \cup B) \lesssim \rho^{10} \ll 1 $.  Then,  $\measuredangle (\Tan_{x}\Delta^{\u/\s}(x), L) \geq \rho^{10}$ for all $x$ outside a set of $\sigma^\pm_r$-measure at most $ \rho^{10}$ and $\measuredangle (\Tan_{x}\Delta^{\u}(x), L) \geq \rho^{10}$ and  $ \measuredangle (\Tan_{x}\Delta^{\s}(x), L) \geq \rho^{10}$ for all $x$ outside a set of $\mu_r$-measure at most $ \rho^{10}$.

Let $D$ be a line perpendicular do $L$ and denote $\pi_D:\C^2 \to D$ the orthogonal projection over $D$.  We will work with the coordinate system $(\mathtt p_0,  D \oplus D^\perp)$ where  $\mathtt p_0$ is any fixed reference point.  Observe that $D^\perp$ is parallel to $L$.  In particular,  the above angle estimates hold for $D^\perp$ instead of $L$.

 Let $\Delta := \Delta^{\u}(x)$ be a plaque of $ T^+_{r}$.  By the construction given in Subsection \ref{subsec:P1-P4},  $\Delta$ is a graph above some $\W_{2r,\eta_i}^2$,  for $i=1$ or $2$.  Moreover,  it intersects the smaller cell $ (1-(2r)^\gamma) \overline \W_{2r,\eta}^2$.  If $x \in \Delta \cap  (1-(2r)^\gamma) \overline \W_{2r,\eta}^2 $ and $\measuredangle (\Tan_{x}\Delta, L) \geq \rho^{10}$ then,  by Lemma \ref{lemma:frames},  $\Delta$ is a graph of the form $(t,\varphi(t))$ over a square of side $\frac{1}{32} \rho^{24}$ with $\Lip(\varphi) \leq \rho^{-10}$.  As $\rho^{25} \leq \frac{1}{32} \rho^{24}$,  $\Delta$ is also a graph over a square $Q_{\rho^{25}}$ of $D$ of side $\rho^{25}$.  

Let $\nu^+_{r}$ be the transverse measure associated with $T^+_r$ (see Definition \ref{def:T^+_r}).  Let $\Nc^+_{r}:= \mathbf 1_\Ec \nu^+_{r}$ where $\Ec$ is the set plaques $\Delta$ satisfying $\measuredangle (\Tan_{x}\Delta, L) \geq \rho^{10}$ for all $x \in (1-(2r)^\gamma)P_{2r}$ and put
$$\T^+_r = \int_{\Gc_r} [\Delta] d \Nc^+_{r}(\Delta).$$ 

By an analogous procedure one can define $\T^-_r $.  From the above discussion, these currents satisfy properties (1), (2), and (3) in the statement of the proposition. It remains to prove properties (4) and (5).
 
By construction, the difference $T^+_r - \T^+_r$ is supported by $E^+(L) \cap (1-(2r)^\gamma) P_{2r}$.  Writing 
$T^+ - \T^\pm_{r} = (T^+  - T^+_r ) + (T^+_r - \T^+_{r})$, we get that 
\begin{align*}
\|T^+ - \T^+_{r}\|_{P_{2r}} &\leq T^+ \wedge \omegaFS ( (1-(2r)^\gamma) \mathbb S_r \cap P_{2r} ) \\ & \quad + (T^+ - T^+_r) \wedge \omegaFS ((1-(2r)^\gamma)P_{2r})) + T^+_r \wedge \omegaFS (E^+(L)),
\end{align*}
where we use the notation of Definition \ref{def:manhattan}. Using Lemma \ref{lemma:dujardin-fubini},  the estimate \eqref{eq:mass-T-T_r} and the fact that $L \notin B^+$ yields
$$\|T^+ - \T^+_{r}\|_{P_{2r}} \leq c r^\gamma + 2C r^2 + \rho^{10} \leq 100 r^\gamma,$$
since $\rho < r$ and $r$ is small.  This gives property (4) of the proposition for $\T^+_{r}$ and an analogous argument gives the same property for $\T^-_{r}$.

By construction, we have that  $0 \leq \T^+_{r} \dot{\wedge} \T^-_{r} \leq \overline \mu_r \leq \mu$.  In order to obtain the estimate stated in  property (5),  we argue as above and use Proposition \ref{prop:geometric-intersection0} and the mass estimate for $\mu_r(E^\pm(L))$ coming from the fact that $L \notin B$.  This finishes the proof of the proposition for $\T^{\pm}_{r} $.

The same arguments can be applied for the other (finitely many) currents $\T^{\pm}_{r,\bullet}$.   In principle,  for each current $\T^{\pm}_{r,-jp}$ one finds a line $D_j$ for which the desired properties hold.  However,  since  by the above arguments the set of ``bad'' lines is of measure  $\lesssim \rho^{10} \ll 1$,  one can choose a same line $D$ that works for every current $\T^{\pm}_{r,\bullet}$.  We have thus proved points (1) to (4). The proof of (5) is analogous as the one of Lemma \ref{lemma:mass-B*}. This finishes the proof of the proposition.
\end{proof}

\begin{remark}
For later use, we also note that, in the notation of the previous proof,  from the fact that the diameter of $Q_{\rho^{25}}$ is $\sqrt{2}\rho^{25}$ and that $\Lip(\varphi) \leq \rho^{-10}$,  it follows that the portion of $\Delta$ above $Q_{\rho^{25}}$ is included in the cell $ \overline \W_{2r,\eta}^2 $. 
\end{remark}

\begin{proposition} \label{prop:angle-holder}
 Let $\Bc^*$ be as above and fix $x,y \in \Bc^*$ with $\dist(x,y) < \rho^{40}$ and $\Delta^{\u}(x) \cap \Delta^{\u}(y) \neq \varnothing$.  Then $\measuredangle (\Tan_x \Delta^{\u}(x) , \Tan_y \Delta^{\u}(y)) < \rho^{10}$
\end{proposition}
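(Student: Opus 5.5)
The argument has two steps. First, two unstable plaques through nearby points of $\Bc^*$ that intersect must coincide near that intersection. Second, the slope along a single plaque is controlled, which turns closeness of the points into closeness of the tangent directions.

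\emph{Step 1: coincidence.} Let $z\in \Delta^{\u}(x)\cap\Delta^{\u}(y)$. Both plaques are plaques of the uniformly laminar current $\T^-_{r}$, or more generally limits of components of $f^{m_l}(L)$ as in the proof of (P4). I would repeat the Hurwitz argument of (P4). If the two plaques met at an isolated point, then for $l$ large the approximating components of $f^{m_l}(L)$ would also meet at an isolated point. These components are discs in the smooth curve $f^{m_l}(L)$, so this is impossible. Hence the two plaques agree on a common open set. By Proposition \ref{prop:one-projection} both are graphs over squares of side $\rho^{25}$ for the single projection $\pi_D$. Two holomorphic graphs over overlapping squares that agree on an open set agree on the whole overlap, by the identity principle. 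Since $\dist(x,y)<\rho^{40}\ll\rho^{25}$, the points $\pi_D(x)$ and $\pi_D(y)$ lie deep inside both base squares. Therefore $x$ and $y$ lie on one graph $\{(t,\varphi(t))\}$ over a square $Q$ of side comparable to $\rho^{25}$ containing both projections, with a margin of order $\rho^{25}$ around them. If necessary I will shrink to a concentric square of side $\rho^{25}/2$.

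\emph{Step 2: slope variation along one graph.} We have $\Lip(\varphi)\le\rho^{-10}$ on $Q$. Applying the Cauchy estimates on discs of radius $\sim\rho^{25}$ inside $Q$ gives $|\varphi''|\lesssim \rho^{-10}\rho^{-25}=\rho^{-35}$ near $\pi_D(x)$ and $\pi_D(y)$. Since $|\pi_D(x)-\pi_D(y)|\le \dist(x,y)<\rho^{40}$, we get $|\varphi'(t_x)-\varphi'(t_y)|\lesssim \rho^{-35}\rho^{40}=\rho^{5}$. The tangent lines are spanned by $(1,\varphi'(t_x))$ and $(1,\varphi'(t_y))$. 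Their angle is bounded by $|\varphi'(t_x)-\varphi'(t_y)|/(1+\min|\varphi'|^2)$, which is at most $C\rho^5$.

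\emph{Main obstacle.} $C\rho^5$ is not below $\rho^{10}$. The crude bound $\Lip\le \rho^{-10}$ loses too much, and recovering the exponent is the main difficulty. I would first try to exploit that the angle between lines is small exactly where the slope is large. If $|\varphi'|\sim S$, the angle variation is about $|\Delta\varphi'|/S^2$. Moreover, the Cauchy estimate can use the sup of $|\varphi - \text{affine}|$ rather than the Lipschitz bound.

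A better route is to bound $\varphi''$ directly in the original chart. Lemma \ref{lemma:slope0} gives $|\varphi''|\le c_0r^{-2}$ for the graph over the canonical axis at scale $r$. In those coordinates the tangent direction varies by at most $c_0 r^{-2}\cdot\rho^{40}$. Since $\rho=r^\kappa$ with $\kappa>1$, this is $\ll\rho^{10}$. This bound also covers the case where the original slope is large, because the same estimate holds for the inverse parametrization up to a rescaling. Changing to another splitting distorts angles only by a bounded factor, as the angle is measured in $\C^2$ itself. So I would carry out Step 2 in the canonical coordinates from Lemma \ref{lemma:slope0}, using $\rho^{40}\cdot r^{-2}\cdot r^{\text{slope loss}}\ll \rho^{10}$. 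The single projection $\pi_D$ is used only in Step 1, to guarantee that $x$ and $y$ lie on a common graph.
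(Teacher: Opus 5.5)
Your argument is correct and is essentially the first of the two routes the paper indicates. The paper's proof only says to use Lemma~\ref{lemma:frames}, i.e.\ control of the tangent direction along a single plaque. In your version, the intersection hypothesis puts $x$ and $y$ on one plaque, and the second-derivative bound of Lemma~\ref{lemma:slope0} in the canonical chart then gives $|\varphi'(z_x)-\varphi'(z_y)|\lesssim r^{-2}\rho^{40}=r^{40\kappa-2}\ll\rho^{10}$. If $x$ lies near the edge of the cell there is only a polynomial loss in $r$, which the margin $\rho^{30}=r^{30\kappa}$ absorbs. You were right to discard the crude $\pi_D$-chart estimate, which only gives $\rho^{5}$.

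Your concerns about large slopes and changes of splitting are unnecessary. In the orthonormal canonical coordinates, the sine of the angle between $\C(1,a)$ and $\C(1,b)$ equals $|a-b|/\sqrt{(1+|a|^2)(1+|b|^2)}\le|a-b|$.
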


\begin{proof}
Use Lemma  \ref{lemma:frames} or the transverse Hölder regularity of the unstable lamination,  see Proposition \ref{prop:holo-motion} below.
\end{proof}

\section{Quantitative graph transforms and a closing lemma} \label{sec:graph-transform}

In this section we'll use the construction of the currents $\T^\pm_{r,\bullet}$ in order to define graph transforms with quantitative control.  For the classical theory of graph transforms,  we refer to \cite{katok-hasselblat,barreira-pesin} for real dynamical systems, and to \cite{bedford-lyubich-smillie,dethelin-nguyen} in the holomorphic setting.  As a byproduct,  we will obtain a quantitative closing lemma (see Theorem \ref{thm:graph-transform} below)

\subsection{Holomorphic motions, straightening and stable boxes} \label{sec:motion}

Recall from the last section that the currents  $\T^+_{r,\bullet}$ are formed by graphs above cells of a tilling of given line $D$ by squares of side $\rho^{25}$.  In this subsection,  we'll work with the affine coordinates $(t,s)$ in $\C^2$ induced by the splitting $D \oplus D^\perp$ centered at the intersection point of $D$ and $D^\perp$.  The Julia set of $f$ is contained in a some bi-disc $K := \D(0;R')^2$ in such coordinates.

\medskip

\textbf{Notation:} Fix $\epsilon>0$, possibly depending on $n$ (as will indeed be the case below). Let $\Qc_\epsilon$ (resp. $\Qc_\epsilon^\perp$) denote the tiling of $D$ (resp. $D^\perp$) by squares of side length $\epsilon$, and let $Q_\epsilon$ (resp. $Q_\epsilon^\perp$) denote one of the squares in $\Qc_\epsilon$ (resp. $\Qc_\epsilon^\perp$).  To simplify the notation,  we will often assume that the squares under consideration are centered at the origin, noting that the choice of center is irrelevant to the discussion. Indeed, this amounts only to translating the affine coordinates $(t,s)$ and has no effect on the estimates below.

\medskip

\textbf{$N \times N$ grids.}  It is a standard fact that holomorphic motions induced by stable and unstable leaves are transversally Hölder continuous, with exponent close to one near  central fiber. In fact, this is true in general for any Riemann surface lamination in dimension two, see Proposition~\ref{prop:holo-motion} below. In order to weaken the effect of this Hölder exponent, we will regard each cell inside an $N$ by $N$ grid and restrict our analysis to the central cell of this grid. With this purpose in mind, we set $$\theta_-:= \frac{1-\sqrt2 / N}{1+\sqrt2 / N} \quad \text{ and } \quad \theta_+:= \frac{1+\sqrt2 / N}{1-\sqrt2 / N}$$
and choose $N:=N(f)$ sufficiently large so that  $$\frac12 < \theta_-< 1,  \quad 1< \theta_+ <2 \quad \text{and} \quad 0< 100 \kappa \Big( \frac{\theta_+}{\theta_-} - 1  \Big)< \gamma.$$
Observe that $N$ is independent of $n$ and $\alpha$.

Note that, since $N$ is fixed, $\rho=d^{-\alpha\kappa n}$, and $n$ is sufficiently large, we may assume that $N\rho^{26}\ll \rho^{25}$. In particular, every plaque of $\T^+_{r,\bullet}$ can be represented as a graph over a square $Q_{N\rho^{26}}$ contained in $D$. We will work separately on the strips lying above $Q_{\rho^{26}}$ and $Q_{N\rho^{26}}$. 

The following result is a consequence of the $\lambda$-lemma and the theory of quasiconformal maps.  It can be proven using Harnack's inequality for harmonic functions applied in the discs $\{|t| < \sqrt 2 \rho^{26}\} \subset \{|t| < N \rho^{26}\}$,  see \cite{dujardin-guedj}.  

\begin{proposition} \label{prop:holo-motion}
Let $1/2 <\theta_-<\theta_+<2$ be as above.  Fix a square $Q_{N \rho^{26}}$ of the tilling $\Qc_{N \rho^{26}}$. Assume that $Q_{N \rho^{26}}$ is centered at the origin. Let $(0,a) \in \supp \T^+_{r,\bullet}$ and $\Delta_a = \{(t, \varphi_a(t)) : |t| < N \rho^{26}\}$ be the plaque of $\T^+_{r,\bullet}$ over $Q_{N \rho^{26}}$ passing through $(0,a)$.  Then,
\begin{equation}
\frac12 |a-b|^{\theta_+} \leq |\varphi_a(t) - \varphi_b(t)| \leq 2 |a-b|^{\theta_-} \quad \text{for all } t \in Q_{\rho^{26}}.
\end{equation}
\end{proposition}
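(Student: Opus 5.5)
The plan is to view the plaques of $\T^+_{r,\bullet}$ above $Q_{N\rho^{26}}$ as a holomorphic motion and apply the standard Harnack argument to the logarithm of differences of leaves. Fix two points $(0,a),(0,b)$ in $\supp \T^+_{r,\bullet}$ with $a\neq b$. Since the current is uniformly laminar, the plaques $\Delta_a$ and $\Delta_b$ are disjoint graphs over the disc $\{|t|<N\rho^{26}\}$; if they met at a point they would coincide. Hence $h(t):=\varphi_a(t)-\varphi_b(t)$ is a nonvanishing holomorphic function on that disc. After shrinking to the inscribed disc of radius comparable to $N\rho^{26}$, I rescale $t = N\rho^{26} u$ so that the domain becomes the unit disc $\D$. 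The relevant points $t\in Q_{\rho^{26}}$ then correspond to $|u|\leq \sqrt2/N$, which is exactly why $\theta_\pm$ involve the ratio $(1\mp\sqrt2/N)/(1\pm\sqrt2/N)$.

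The key step is to apply Harnack's inequality to a positive harmonic function built from $\log|h|$. All plaques lie in the bounded bidisc $K$ and in fact, by the remark after Proposition \ref{prop:one-projection}, in a cell of size $\lesssim r$. So $|h|\leq M$ with $M<1$ after normalising, assuming $r$ small and that $|a-b|$ is small, as is the case inside one cell. Then $v(u):=-\log|h(u)|+\log M' $ with $M'\geq \sup|h|$ is a positive harmonic function on $\D$. Harnack gives, for $|u|\leq \sqrt2/N$,
\[
\frac{1-|u|}{1+|u|}\, v(0)\leq v(u)\leq \frac{1+|u|}{1-|u|}\, v(0),
\]
so that $\theta_- v(0)\le v(u)\le\theta_+ v(0)$. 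Since $v(0)=-\log|a-b|+\log M'$, exponentiating yields
\[
(M')^{1-\theta_+}|a-b|^{\theta_+}\le |h(u)| \le (M')^{1-\theta_-}|a-b|^{\theta_-}.
\]

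The final step is to absorb the constants into the factors $1/2$ and $2$. Since $1/2<\theta_-<1<\theta_+<2$, the exponents $1-\theta_\pm$ have absolute value less than $1$. What remains is to choose the normalisation $M'$ appropriately, for instance $M'=1$ when all vertical differences are at most $1$, which holds since the plaques sit in a cell of side $\ll 1$. With that choice the constants are $1$, and the stated bounds with $1/2$ and $2$ follow.

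I expect the main obstacle to be this normalisation together with the domain geometry. One has to make sure that $h$ is nonvanishing on a full disc of radius $N\rho^{26}$ centred at the centre of the square: the plaque is defined over the square $Q_{N\rho^{26}}$, and its inscribed disc has radius $N\rho^{26}/2$. Obtaining the exact ratio $\sqrt2/N$ therefore requires using that plaques are graphs over the larger square $Q_{\rho^{25}}\supset$ the disc of radius $N\rho^{26}$, which holds since $N\rho^{26}\ll\rho^{25}$. I would invoke this containment to justify working on the disc of radius $N\rho^{26}$. If constants still misbehave, I would slightly enlarge $N$, which is allowed since $N=N(f)$ is chosen freely.
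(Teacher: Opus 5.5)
Your proof is correct and follows the route the paper indicates: Harnack's inequality applied to the positive harmonic function $\log M'-\log|\varphi_a-\varphi_b|$ on the nested discs, with the radius ratio $\sqrt2/N$ producing exactly $\theta_\pm$. The domain issue you flag is harmless, because the inscribed disc of $Q_{N\rho^{26}}$ has radius $N\rho^{26}/2$ and $Q_{\rho^{26}}$ lies in the disc of radius $\rho^{26}/\sqrt2$, so the ratio is already $\sqrt2/N$; also, for plaques in different cells $M'=1$ may fail, so take $M'$ of order $\diam K$ and use that $\theta_\pm$ are close to $1$ (i.e.\ $N=N(f)$ large) to absorb $(M')^{1-\theta_\pm}$ into the constants $2$ and $\frac12$.
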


Together with Slodkowski extension theorem \cite{slodkowski},  the above statement can be rephrased as follows. Consider a vertical strip $\mathcal T_{\rho^{26}} := \pi_D^{-1}(Q_{\rho^{26}}) \cap K$.   Denote by $\Lc^+_{r}$ the lamination associated with the uniformly laminar current $ \T^+_{r}$ over $\mathcal T_{\rho^{26}}$ .  Then,  there exists straightening homeomorphism $\Phi: \mathcal T_{\rho^{26}} \to \mathcal T_{\rho^{26}}$ of the form $\Phi(z,w) = (z,h_z(w))$ such that both $\Phi$ and $\Phi^{-1}$ are  Hölder with  exponent $\frac12 <\theta_-<1$,  holomorphic in $w$,  that sends $\Lc^+_{r}$ to a trivial lamination of the form $Q_{\rho^{26}} \times \Sigma$ for some closed  subset $\Sigma \subset D^\perp$.  

\smallskip

Consider the refinement $\Qc_{\rho^{60}}$ of the original tiling of $D$ into squares of side length $\rho^{60}$, and the refinement $\Qc^\perp_{\theta_-^{-1}\rho^{60}}$ of the tiling of $D^\perp$ into squares of side length $\theta_-^{-1}\rho^{60}$.

\begin{definition} \label{def:stable-box}
Let $\Phi$ be the above straightening map and denote $\Psi$ the inverse of $\Phi$. 
A vertical strip (of size $\rho^{60}$) is a set of the form $$\mathcal T:=  \mathcal T_{\rho^{60}}:= \pi_D^{-1}(Q_{\rho^{60}}) \cap K$$ for some $Q_{\rho^{60}} \in \Qc_{\rho^{60}}$.  A vertical corridor is a set of the form $\pi_D^{-1}\big(Q_{\rho^{60}} \setminus r^\gamma Q_{\rho^{60}} \big)) \cap K$.

A horizontal strip is a set of the form $\Psi\big( \pi_{D^\perp}^{-1}\big( Q_{ (\rho^{100}/2)^{1 / \theta_-}}\big) \big) \cap \mathcal T$ and  horizontal corridor is a set of the form $$\Psi\big(\mathbf S^\perp \big) \cap \mathcal T,  \quad \text{where} \quad  \mathbf S^\perp:=   \big(  Q^\perp_{ (\rho^{100}/2)^{1 / \theta_-}} \big) \setminus    \big(  r^\gamma Q^\perp_{ (\rho^{100}/2)^{1 / \theta_-}} \big)$$
  for some $Q^\perp_{ (\rho^{100}/2)^{1 / \theta_-}}  \in \Qc^\perp_{ (\rho^{100}/2)^{1 / \theta_-}} $.  

A stable box  (of size $\rho^{60} \times \rho^{100}$)  is a set of the form  $$\Sc = \Psi\big(Q_{\rho^{60}} \times Q^\perp_{ (\rho^{100}/2)^{1 / \theta_-}}  \big)$$  for some $Q_{\rho^{60}} \in \Qc_{\rho^{60}}$ and  $Q^\perp_{ (\rho^{100}/2)^{1 / \theta_-}}  \in \Qc^\perp_{ (\rho^{100}/2)^{1 / \theta_-}} $.
\end{definition}

Observe that,  differently from the vertical ones,  the horizontal strips and corridors are not straight.  The name stable box comes from the fact that the horizontal boundary is aligned with the plaques of $ \T^+_{r}$ .  Observe also that the horizontal strips and corridors are only defined above a given vertical strip $\mathcal T$.    

If we choose another vertical strip $\mathcal T'$ we get another straightening homeomorphism $\Phi'$ with the same properties.  The union of all such strips cover $K$.  It follows that the union of all stable boxes form a partition over $K$.  This partition is a deformed version of the initial partition by the cells $\W_{r,\eta}^2$,  where the horizontal boundaries follow the stable plaques.  Notice that each stable box is saturated by the lamination $\Lc^+_{r}$ in the sense that if a stable box contain a point $x \in \supp \T^+_{r}$,  then it contains the whole plaque of $\T^+_{r}$ through $x$. The horizontal corridors are also saturated in the above sense.

The following result is an analogue of Lemma \ref{lemma:dujardin-fubini} and can be proven similarly. 

\begin{lemma} \label{lemma:dujardin-fubini2}
Denote by $\Cc$ the union of all vertical and horizontal corridors intersecting $K$ and let  $\sigma_{T^\pm}=T^\pm \wedge \omegaFS$ be the trace measure of $T^\pm$.  There exists a constant $C>0$ such that $$\big( \mu + \sigma^+ + \sigma^- \big)(\Cc) \leq C r^\gamma.$$
\end{lemma}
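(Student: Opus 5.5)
We split $\Cc$ into the vertical corridors and the horizontal corridors and bound the $\mu$-, $\sigma^+$- and $\sigma^-$-mass of each family separately. The mechanism is the one behind Lemma \ref{lemma:dujardin-fubini}. The Green functions $G^\pm$ are Hölder with exponents $\gamma^\pm$, so for any ball $B(x,\delta)$ one has $\sigma^\pm(B(x,\delta)) \lesssim \delta^{2\gamma^\pm}$ and $\mu(B(x,\delta)) \lesssim \delta^{\gamma'}$. Alternatively, and better suited here, we use a Fubini/averaging argument: integrating the mass of $T^\pm\wedge\omegaFS$ over translated grids shows that a random translate of a street network of relative width $r^\gamma$ carries mass $\lesssim r^\gamma$. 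Recall that the translation parameter $\tau$ was left free in Definition \ref{def:manhattan}, and that Lemma \ref{lemma:dujardin-fubini} is exactly this statement for the grid $\W^2_{r,\eta}$ with $\tau$ suitably chosen. We will allow ourselves to choose the translation of the tilings $\Qc_{\rho^{60}}$ and $\Qc^\perp_{(\rho^{100}/2)^{1/\theta_-}}$ in the same way.

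\emph{Vertical corridors.} The union of vertical corridors is $\pi_D^{-1}(\mathbf S)\cap K$, where $\mathbf S\subset D$ is a planar street network: each square $Q_{\rho^{60}}$ minus its concentric copy scaled by $r^\gamma$. We take a positive measure $\nu$ among $\mu,\sigma^+,\sigma^-$, all of mass $\lesssim 1$ on $K$. Averaging $(\pi_D)_*\nu(\mathbf S+\tau)$ over $\tau$ in one fundamental square gives exactly $\|\nu\|\cdot\Leb(\mathbf S\cap Q)/\Leb(Q)=\|\nu\|(1-r^{2\gamma})$. That is the wrong set, so we first adjust the convention: the corridor should be the thin part. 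Indeed the corridors correspond to the neighbourhood $Q\setminus(1-r^\gamma)Q$ of the boundary, of relative area $\lesssim r^\gamma$, in the spirit of $\lambda\mathbb S_r$ with $\lambda=1-(2r)^\gamma$. With this reading the average is $\lesssim r^\gamma$, so a translation $\tau_1$ exists with mass $\le 3Cr^\gamma$ simultaneously for the three measures, by Chebyshev applied to the sum.

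\emph{Horizontal corridors.} Here we work inside each vertical strip $\mathcal T$ through the straightening $\Phi$ of Proposition \ref{prop:holo-motion}. Since $\Phi$ is fiber preserving and holomorphic in the vertical variable, the horizontal corridors are $\Psi(Q_{\rho^{60}}\times \mathbf S^\perp)$, with $\mathbf S^\perp\subset D^\perp$ again a thin street network. We push forward $\nu|_{\mathcal T}$ by $\pi_{D^\perp}\circ\Phi$ and average over translations $\tau_2$ of the tiling of $D^\perp$. The same Fubini computation gives $\int \nu(\Psi(Q\times(\mathbf S^\perp+\tau_2)))\,d\tau_2\lesssim r^\gamma\,\nu(\mathcal T)$. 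Summing over the strips, whose total mass is $\lesssim 1$, and choosing $\tau_2$ by Chebyshev gives the bound. One subtlety is that $\Phi$ depends on the strip, but a single translation parameter $\tau_2$ for $D^\perp$ is used everywhere, so the averaging commutes with the sum over strips.

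\emph{Main obstacle.} The delicate point is justifying the choice of translations: the tilings of Section \ref{sec:motion} were fixed before Lemma \ref{lemma:dujardin-fubini2}, and the earlier constructions ($\T^\pm_{r,\bullet}$, the line $D$) must not be affected. We will argue that the translations of the fine tilings $\Qc_{\rho^{60}}$ and $\Qc^\perp$ enter nowhere before this point, so they can be fixed now. A further issue is that $\Phi$ is only Hölder, but this is irrelevant because we only push measures forward and never compare areas. If a deterministic statement is required instead, we would fall back on the Hölder continuity bound $\nu(B(x,\delta))\lesssim\delta^{\beta}$ together with the covering of the corridors by balls. That route loses exponents, and Hölder distortion of $\Psi$ at scale $\rho^{100}$ would need the constraint $100\kappa(\theta_+/\theta_- - 1)<\gamma$ built into the choice of $N$.
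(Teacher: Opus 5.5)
Your proposal is correct and takes the approach the paper intends. The paper only says the lemma is proved like Lemma \ref{lemma:dujardin-fubini}, that is, by Dujardin's Fubini argument averaging over translations of the tiling (the parameter $\tau$ suppressed in Definition \ref{def:manhattan}); your handling of the curved horizontal corridors, pushing $\nu|_{\mathcal T}$ forward by $\pi_{D^\perp}\circ\Phi$ and averaging over translations of $\Qc^\perp$, is the natural way to carry this out. Your reading of the corridors as the thin layers $Q\setminus(1-r^\gamma)Q$, rather than the literal $Q\setminus r^\gamma Q$, is also the intended one, as the corridor width $r^\gamma\rho^{60}$ used in Lemma \ref{lemma:support-corridor} confirms.
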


\subsection{Space of graphs and the fixed stable graph}

From now on,  we fix a stable box $\Sc$ of size $\rho^{60} \times \rho^{100}$ as in Definition \ref{def:stable-box}.  We'll define a graph transform on $\Sc$.

\begin{definition}  \label{def:stable-like-graphs}
Fix a stable box $\Sc$ and  $x \in   \Bc^* \cap \Sc$.   Let $\Delta^{\s}$ (resp.  $\Delta^{\u}$) the plaque of $\T^+_{r} $ (resp. $\T^-_{r}$) through $x$.  Denote by $(z,w)$ the affine coordinates associated with the splitting $\Tan_x \Delta^{\s} \oplus \Tan_x \Delta^{\u} $ centered at $x$.
We will denote by 
 $$\Gc_\Sc:= \Big \lbrace \Gamma : \Gamma  \text{ is a graph of the form } (z,\varphi(z)) \text{ over } |z| \leq 2\rho^{60} \text{ s.t. } \Lip(\varphi) \leq 3/4,  |\varphi(0)| \leq \rho^{60} \Big \rbrace$$
and call it the space of stable-like graphs inside $\Sc$.
\end{definition}
  By the Arzelà–Ascoli theorem, this is a compact metric space with the distance
$$d(\Gamma, \Upsilon) = \sup_{ |z| \leq 2\rho^{60}} \big| \varphi(z) - \psi(z) \big|,$$
where $\Gamma$ is the graph of $\varphi$ and $\Upsilon$ that of $\psi$.

The goal of this section is to prove the following key result.  The final conclusion has the form of a closing lemma: every sufficiently good near return is accompanied by a nearby periodic orbit.

\begin{theorem}[Graph transform and Closing Lemma] \label{thm:graph-transform} Fix $x \in   \Bc^* \cap \Sc$ and assume that $\dist(x,f^{-n}(x)) < \rho^{70}$. Then, the map $f^{-n}$ induces a map $\Lambda: \Gc_\Sc \to \Gc_\Sc$ with the following properties.
\begin{enumerate}
\item  For every $\Gamma, \Upsilon \in \Gc_\Sc$ we have that $d(\Gamma,   \Upsilon) \leq  \frac{1}{100} d(\Gamma,\Upsilon) $. In particular, $\Lambda$ admits a unique fixed point in $\Gc_\Sc$, denoted by $\Gamma(x)$. 

\item $f^n$ preserves $\Gamma(x)$ and the induced map $f^{n}: \Gamma(x) \to \Gamma(x)$ is a contraction with respect to the euclidean metric.  More precisely,  for every $y \in \Gamma(x)$ and $u \in \Tan_y \Gamma(x)$ of unit norm, we have that $$\|Df^n(y) \cdot u\| \leq d^{-n/100}.$$  In particular $\Gamma(x)$ contains a periodic point of order $n$ for $f$.
\end{enumerate}

\end{theorem}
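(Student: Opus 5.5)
The plan is the classical graph transform, made quantitative with the estimates (P4)--(P11). Work in the affine coordinates $(z,w)$ of Definition \ref{def:stable-like-graphs}, adapted to the splitting $\Tan_x\Delta^{\s}\oplus\Tan_x\Delta^{\u}$ and centered at $x$. By (P8) the angle between the two axes is at least $\rho^7$, so this coordinate change and its inverse have norm at most $C\rho^{-7}$; all losses of this size will be absorbed by the exponentially large gains below. Set $y:=f^{-n}(x)$, so $\dist(x,y)<\rho^{70}$. Decompose
$$f^{-n}=f^{-(p+s)}\circ f^{-p}\circ\cdots\circ f^{-p}.$$
Since $x\in\Bc^*$, the points $f^{-jp}(x)$ lie in $\Bc_{-jp}$, and $y\in\Bc_{-n}$. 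Hence the plaques of $\T^\pm_{r,-jp}$ through the intermediate points are defined. By (P4) and Remark \ref{rmk:inherit}, $f^{-p}$ maps unstable plaques of level $-jp$ into unstable plaques of level $-(j+1)p$. Symmetrically, $f^{-p}$ maps stable plaques of level $-jp$ into stable plaques of level $-(j+1)p$, after applying the analogue of (P4) for $T^+_{r,\bullet}$, which is proved the same way.

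Step 1 (one-step cone estimates). At each intermediate point, use the stable/unstable splitting of the corresponding plaques. By (P5), (P6) and the angle bound (P8), $Df^{-p}$ expands the stable direction by at least $d^{\alpha n/4}$ and contracts the unstable direction by a factor at most $d^{-\alpha n/4}$. This comes from $f^{-p}$ being the inverse of $f^p$ along the compatible plaques, as in the Claim in the proof of (P8). The coordinate changes between consecutive splittings cost at most $\rho^{-14}=d^{-14\alpha\kappa n}$. This is where I expect the main obstacle: for $d^{\alpha n/4}$ to beat $\rho^{-14}$ the exponents must be reconciled. I would first try to recompute (P5) with the sharper bound $c_0r^{-1}d^{-p/4}$ and choose $p$ as a larger multiple of $\alpha\kappa n$, or else replace the coordinate changes by the Hölder control of Proposition \ref{prop:angle-holder}. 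The latter shows that nearby splittings differ by an angle $<\rho^{10}$, so the cone of slope $\le 3/4$ about the stable direction maps into a cone of slope $\le 3/4$ at the next level.

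Step 2 (second-order control). The nonlinearity is controlled by bounding $\|D^2f^{\pm p}\|\le d^{C p}$ on $K$ (the analogue of $\kappa$). Since the graphs live at scale $\rho^{60}$ and the relevant distances are at most $\rho^{60}$, second-order terms contribute $\rho^{60}d^{Cp}\ll 1$, so linear cone invariance persists on the whole bidisc $|z|\le 2\rho^{60}$.

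Step 3 (definition and contraction of $\Lambda$). For $\Gamma\in\Gc_\Sc$, take $f^{-n}(\Gamma)$, which lies near $y$, hence near $x$ since $\dist(x,y)<\rho^{70}\ll\rho^{60}$. Steps 1--2 show that along the orbit it stays a graph in the stable cones, over a base disc expanded by a factor at least $d^{\alpha n/4}$, with $\Lip\le 3/4$. Restrict it to $|z|\le 2\rho^{60}$ and define this as $\Lambda(\Gamma)$. The condition $|\varphi(0)|\le\rho^{60}$ is preserved because the unstable direction is contracted and the base point of $f^{-n}(\Gamma)$ is within $\rho^{70}$ of $x$. Two graphs differ in the vertical (unstable) direction, which $f^{-n}$ contracts by at least $d^{-\alpha n/4}\cdot\rho^{-C}\le 1/100$, giving item (1). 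Banach's fixed point theorem on the complete space $\Gc_\Sc$ then yields $\Gamma(x)$.

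Step 4 (item (2)). Since $\Lambda(\Gamma(x))=\Gamma(x)$, the part of $f^{-n}(\Gamma(x))$ over $|z|\le 2\rho^{60}$ is $\Gamma(x)$. Equivalently, $f^n$ maps $\Gamma(x)$ into itself, or more precisely into the preimage graph over a disc shrunk by the expansion factor. Tangent vectors of $\Gamma(x)$ lie in stable cones along the orbit, so by Step 1 $\|Df^n u\|\le\prod(\text{contraction factors})$. Over $\ell+2\approx 1/(6\alpha)$ blocks, each contracting by $d^{-\alpha n/4}$ up to the controlled losses, this gives roughly $d^{-n/24}$ times losses, which is at most $d^{-n/100}$. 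Hence $f^n:\Gamma(x)\to\Gamma(x)$ is a contraction of a disc into itself. Its fixed point, given by Brouwer's theorem or the contraction mapping principle, is a periodic point of period $n$.
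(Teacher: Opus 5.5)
\textbf{Gap in Step 1.} Your architecture is right: a blockwise graph transform along $x,f^{-p}(x),\dots,f^{-n}(x)$, closed up using $\dist(x,f^{-n}(x))<\rho^{70}$. But the obstacle you flag in Step 1 is never resolved, and Steps 3 and 4 depend on it.

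\begin{itemize}
\item In your bookkeeping every block pays a frame change of size $\rho^{-14}=d^{14\alpha\kappa n}$ against a gain of only $d^{\alpha n/4}$, so each block is a net loss.
\item Hence the inequality $d^{-\alpha n/4}\rho^{-C}\le 1/100$ in Step 3 is false for every $C\ge 1$.
\item The ``controlled losses'' in Step 4 accumulate to $\rho^{-14(\ell+1)}$, roughly $d^{7\kappa n/3}$, which swamps $d^{-n/24}$.
\end{itemize}

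Neither of your proposed fixes works.
\begin{itemize}
\item Enlarging $p$ does not help. The only angle bound available (from the proof of (P8)) is $\gtrsim\|Df^p\|^{-1}\ge d^{-\kappa p}$. So the per-block frame cost is of order $d^{2\kappa p}$, while the gain from (P5) is at best $d^{p/4-\alpha n}$. Since $\kappa>1$, no choice of $p$ makes the gain win.
\item Proposition \ref{prop:angle-holder} only compares plaques at points within $\rho^{40}$ of each other. Consecutive orbit points $f^{-jp}(x)$ and $f^{-(j+1)p}(x)$ are not close, so it says nothing there.
\end{itemize}

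\textbf{The missing idea: adapted (Pesin) frames.} Work in Pesin frames throughout, and never return to Euclidean coordinates in the middle of the orbit.

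Push unit stable and unstable vectors along the orbit to get $u_{-jp},v_{-jp}$, so the splitting is exactly invariant. Let $C(f^{-jp}(x))$ send $e_1,e_2$ to $u_{-jp},v_{-jp}$. Then $g_j=C(f^{-(j+1)p}(x))^{-1}\circ f^{-p}\circ C(f^{-jp}(x))$ (with the translations) satisfies $Dg_j(0)=\mathrm{diag}(A,B)$, with $|A|\ge d^{\alpha n/4}$ and $|B|\le d^{-\alpha n/4}$. No angle factor appears, because $|A|$ and $|B|$ are just $|Df^{-p}u|$ and $|Df^{-p}v|$ for unit vectors. In the composition $g_n\circ g_{\ell-1}\circ\cdots\circ g_0$ the intermediate frame changes cancel.

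The bounds $\|C^{\pm1}\|\le 2\rho^{-6}$ then enter in only two places.
\begin{itemize}
\item The second-order term: $\|D^2g_j\|\le\rho^{-33}$. This is harmless because the graphs have size $\rho^{60}$, so the nonlinear part has derivative at most $\rho^{2}$ on the relevant ball.
\item The two ends of the orbit.
\end{itemize}

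This gives a sup-distance contraction of $5d^{-\alpha n/4}$ per block for (1). For (2) it gives $\|Df^n u\|\le 4\rho^{-12}d^{-\alpha n(\ell+1)/8}\le d^{-n/100}$.

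\textbf{Missing return step.} The final graph is a graph in the frame at $f^{-n}(x)$. It must be rewritten in the frame at $x$ with slope still at most $3/4$. This is where Proposition \ref{prop:angle-holder} is actually needed, applied to the close pair $x,f^{-n}(x)$. It gives $C(x)^{-1}C(f^{-n}(x))=\mathrm{Id}+O(\rho^4)$, plus a translation of size at most $2\rho^{64}$.
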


The proof is rather long and require some notation and a number of preparatory steps.  The notation and some of the estimates will follow closely those in \cite{dethelin-nguyen}.

The following lemma follows easily from the construction of $\T^+_{r} $ and the proof is left to the reader.
\begin{lemma} \label{lemma:plaques-in-G}
Let $\Delta$ be plaque of $\T^+_{r} $ inside $\Sc$.  Then $\Delta \subset \Gamma$ for some $\Gamma \in \Gc_\Sc$.
\end{lemma}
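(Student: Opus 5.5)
The plan is to show that, in the coordinates $(z,w)$ attached to the splitting $\Tan_x \Delta^{\s} \oplus \Tan_x \Delta^{\u}$, any plaque $\Delta$ of $\T^+_r$ inside $\Sc$ is a graph $w=\varphi(z)$ over $|z|\le 2\rho^{60}$ with $\Lip(\varphi)\le 3/4$ and $|\varphi(0)|\le \rho^{60}$. I will proceed in three steps: control the tangent directions of $\Delta$, convert this into a graph representation, and then locate the graph near $x$.

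\textbf{Step 1: tangent directions.} By Proposition \ref{prop:one-projection}(3) and Remark \ref{rmk:inherit}, both $\Delta$ and $\Delta^{\s}$ are graphs over squares of side $\rho^{25}$ in $D$, with slope at most $\rho^{-10}$. The stable box $\Sc$ has size $\rho^{60}\times\rho^{100}$, up to the Hölder distortion of $\Psi$. Since $\theta_-$ is close to $1$, every point of $\Delta\cap\Sc$ lies within roughly $\rho^{59}$ of $x$. The plaques $\Delta$ and $\Delta^{\s}$ are leaves of the same lamination $\Lc^+_r$. Their vertical separation over $Q_{\rho^{26}}$ is at most about $\rho^{100\theta_-^{-1}\cdot\theta_-}$, by Proposition \ref{prop:holo-motion}. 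Because they are disjoint or equal, the difference $\varphi_a-\varphi_b$ is a nonvanishing holomorphic function on $|t|<N\rho^{26}$. The Cauchy estimate on $|t|<\rho^{26}$ then gives
$$|\varphi_a'-\varphi_b'|\lesssim \rho^{-26}\,\rho^{90}\ll \rho^{40}.$$
Hence the tangent lines of $\Delta$ in $\Sc$ are within angle $\rho^{30}$ of $\Tan_x\Delta^{\s}$. Here I also use that the slope of $\Delta^{\s}$ itself varies by at most $\rho^{-2\cdot 25}\cdot\rho^{59}$ over the box, which follows from the second-derivative Cauchy bound as in Lemma \ref{lemma:slope0}. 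This step is essentially Proposition \ref{prop:angle-holder} applied to stable plaques.

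\textbf{Step 2: graph over the $z$-axis.} The angle between $\Tan_x\Delta^{\s}$ and $\Tan_x\Delta^{\u}$ is at least $\rho^7$ by (P8). So the linear change from $(t,s)$-type coordinates to $(z,w)$-coordinates has norm at most $C\rho^{-7}$. A complex line within angle $\rho^{30}$ of the $z$-axis is therefore a graph over the $z$-axis with slope at most $C\rho^{-7}\rho^{30}\le 3/4$. Thus $\Delta$ is locally a graph over the $z$-axis with $|\varphi'|\le 3/4$. Globalizing uses Lemma \ref{lemma:frames}(1) with $L=\Tan_x\Delta^{\u}$, applied to the plaque of $\T^+_r$ of size $\rho^{25}$. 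This yields a graph over $|z|\le\frac14\rho^{14}$, which contains $|z|\le 2\rho^{60}$, and this defines the extension $\Gamma\supset\Delta$.

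\textbf{Step 3: the offset at $0$.} Over $z=0$ the point of $\Gamma$ lies within about $C\rho^{-7}\rho^{90}$ of the corresponding point of $\Delta^{\s}$, in which case $\varphi(0)=0$. This gives $|\varphi(0)|\le\rho^{60}$.

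I expect the main obstacle to be bookkeeping rather than a single estimate: the distortions introduced by $\Psi$, by the $\rho^{-7}$ coordinate change and by the Hölder exponents must all be checked to be absorbed by the large gaps between the exponents $25$, $60$ and $100$.
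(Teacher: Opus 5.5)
There is a genuine gap in the last sentence of Step 2, where you conclude $\Gamma\supset\Delta$. Your $\Gamma$ is the restriction of the leaf to $|z|\le 2\rho^{60}$, so the inclusion requires every point of $\Delta$ to satisfy $|z|\le 2\rho^{60}$. You never check this, and in general it is false. Every element of $\Gc_\Sc$ lies in the ball of radius $\frac92\rho^{60}$ centred at $x$, because $|z|\le 2\rho^{60}$, $|\varphi(z)|\le|\varphi(0)|+\frac34|z|\le\frac52\rho^{60}$, and $u_0,v_0$ are unit vectors. On the other hand, $\Delta$ is the whole piece of a leaf lying over the square $Q_{\rho^{60}}\subset D$, and (P9) only bounds the slope of the leaves over $D$ by $\rho^{-10}$. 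A leaf of slope $m$ therefore travels a distance of order $\rho^{60}(1+|m|)$ inside $\Sc$, i.e.\ up to order $\rho^{50}$; the paper itself uses $\diam \Sc<\rho^{50}$. Your claim that $\Delta\cap\Sc$ lies within $\rho^{59}$ of $x$ ignores this slope, and even $\rho^{59}$ exceeds $\frac92\rho^{60}$. Concretely, on $\Delta^{\s}$ itself one has $|w|\le\frac34|z|$, hence $|z(y)|\ge\frac47|y-x|$. So points of $\Delta^{\s}\cap\Sc$ at distance $\gg\rho^{60}$ from $x$ belong to no $\Gamma\in\Gc_\Sc$.

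The paper leaves this lemma to the reader, so the defect is really in the statement as written. What your Steps 1--3 do establish is that the leaf containing $\Delta$ is, in the $(z,w)$-coordinates, a graph over a disc much larger than $\{|z|\le 2\rho^{60}\}$, with small Lipschitz constant and small value at $0$. Hence its restriction $\Gamma$ lies in $\Gc_\Sc$ and $\Delta\cap\{|z|\le 2\rho^{60}\}\subset\Gamma$. You must either state and prove this weaker form, or enlarge the domain in the definition of $\Gc_\Sc$ (to $|z|\le C\rho^{50}$, say) and re-check Lemma \ref{lemma:g-Gamma} and Proposition \ref{prop:graph-transform} with that domain.

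Three smaller points.
\begin{itemize}
\item Lemma \ref{lemma:frames}(1) concerns a disc that is a graph over a square of side $4r$, so you must apply it to the full plaque of $T^+_r$ (Remark \ref{rmk:inherit}), not to the piece of size $\rho^{25}$. That piece has diameter $\lesssim\rho^{15}$ and cannot be a graph over a square of size $\frac14\rho^{14}$.
\item The conclusion of Lemma \ref{lemma:frames}(1) is in the frame $\Tan_{\mathtt p}\Delta\oplus L$. You still have to pass to $\Tan_x\Delta^{\s}\oplus L$ using your angle bound, which costs a factor $\rho^{-7}$ from (P8).
\item In Step 1 your own numbers do not give the angle $\rho^{30}$: the variation you quote, $\rho^{-50}\cdot\rho^{59}=\rho^{9}$, is far larger. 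Use instead the horizontal displacement $|t-t_x|\le\sqrt2\rho^{60}$ and the Cauchy bound $|\varphi''|\lesssim\rho^{-35}$, which follows from slope $\le\rho^{-10}$ at scale $\rho^{25}$. This gives an angle $\lesssim\rho^{25}$, which is enough, since only an angle $\ll\rho^{7}$ is needed to get slope $\le 3/4$ in the $(z,w)$-coordinates.
\end{itemize}
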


\subsection*{Pesin frames}

Fix $x \in   \Bc^* \cap \Sc$ as above.  By the definition of $\Bc^*$,  $x \in \Bc_0 \cap f^p(\Bc_{-p})$.  Denote by $\Delta^{\u}(x)$ (resp.   $\Delta^{\s}(f^{-p}(x))$) the plaque of $ \T^-_{r}$ through $x$ (resp.  of $ \T^+_{r,-p}$ through $f^{-p}(x)$).  Properties (P5) and (P6) give  the following:
\begin{align*}
&\bullet \, \text{For a unit vector } v_0 \in \Tan_x  \Delta^{\u}(x) \text{ we have } |Df^{-p}(x)\cdot v_0| \leq d^{-\alpha n},  \text{ and } \\ 
&\bullet \, \text{For a unit vector } \widetilde{u}_0 \in \Tan_{f^{-p}(x)}  \Delta^{\s}(f^{-p}(x)) \text{ we have } |Df^{p}(f^{-p}(x))\cdot \widetilde{u}_0| \leq d^{-\alpha n}.
\end{align*}

For $l \geq 0$, set  
$$u_0:=  \frac{Df^{p}(f^{-p}(x))\cdot \widetilde{u}_0}{|Df^{p}(f^{-p}(x))\cdot \widetilde{u}_0|}, \quad u_{-l} := \frac{Df^{-l}(x) \cdot u_0}{|Df^{-l}(x) \cdot u_0|} \quad \text{and} \quad v_{-l}:= \frac{Df^{-l}(x) \cdot v_0}{|Df^{-l}(x) \cdot v_0|}.$$

Consider also the complex lines
$$E^{\s}(f^{-l}(x)):= \vect(u_{-l})  \quad \text{and} \quad E^{\u}(f^{-l}(x)):= \vect(v_{-l}).$$

We'll denote by $C(f^{-l}(x))$ the complex $2 \times 2 $ matrix sending the canonical basis of $\C^2$ to the basis $\{u_{-l}, v_{-l}\}$. Recall that $p= \ceil{6 \alpha n}$ and $n= (\ell +1) p  + s$.

\smallskip

Property (P8) yields the following estimate on the the above matrices.  The proof is standard.

\begin{lemma} \label{lemma:pesin-matrix}
Let $l \in  \{0, p, \ldots,\ell p,  n\}$.  Then $\big\| \big(C(f^{-l}(x)\big)^{\pm1} \big\| \leq 2 \rho^{-6}$.
\end{lemma}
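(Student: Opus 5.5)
The bound rests on an elementary fact: for a $2\times 2$ matrix whose columns are unit vectors $u,v$ making an angle $\theta$, both the matrix and its inverse have norm at most about $2/\sin\theta$. So the plan is to reduce the statement to a lower bound on the angle between $u_{-l}$ and $v_{-l}$ at the relevant times, and then invoke Property (P8).

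\emph{Step 1: norm of $C$ and of $C^{-1}$.} Write $C=C(f^{-l}(x))$, with columns the unit vectors $u=u_{-l}$ and $v=v_{-l}$. For a vector $(a,b)$ we have $\|C(a,b)\|=\|au+bv\|\leq |a|+|b|\leq\sqrt2\,\|(a,b)\|$, hence $\|C\|\leq\sqrt2\leq 2\rho^{-6}$.

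For the inverse, let $w=au+bv$ and let $\theta$ be the (complex) angle between $\vect(u)$ and $\vect(v)$. Decompose $w$ orthogonally with respect to $\vect(v)$. The component of $w$ orthogonal to $\vect(v)$ has norm $|a|\sin\theta$, so $|a|\leq \|w\|/\sin\theta$. The symmetric argument gives $|b|\leq\|w\|/\sin\theta$. Therefore
\[
\|C^{-1}\|\leq \frac{\sqrt2}{\sin\theta}\leq \frac{\sqrt2\,\pi}{2\theta}.
\]
It thus suffices to show $\theta\geq \rho^{7}$, since then $\|C^{-1}\|\leq 3\rho^{-7}$.

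\emph{Step 2: the angle via (P8).} At $l=jp$, I would identify $E^{\s}(f^{-l}(x))$ and $E^{\u}(f^{-l}(x))$ with the tangent lines at $f^{-l}(x)$ of the plaques of $\T^\pm_{r,-jp}$. This is where the definition of $\Bc^*$ enters: $f^{-jp}(x)\in\Bc_{-jp}$, and by Remark \ref{rmk:inherit} the plaques of $\T^\pm$ are plaques of $T^\pm$.

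On the unstable side, (P4) says that the plaques are mapped compatibly under $f^{\pm p}$, and hence so are their tangent lines. Consequently $Df^{-jp}(x)\,\Tan_x\Delta^{\u}$ equals the tangent line of the plaque of $T^-_{r,-jp}$ through $f^{-jp}(x)$.

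On the stable side, the analogous compatibility for $T^+$ plaques under $f^{p}$ (the symmetric version of (P4), obtained by the same Hurwitz argument) shows that $u_0$ is tangent to the plaque of $T^+_{r,0}$ at $x$. Pulling back by $Df^{-jp}$ then gives tangency to the plaque of $T^+_{r,-jp}$ at $f^{-jp}(x)$. The case $l=n$ works the same way, using $f^{p+s}$ for the last step.

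With these identifications, (P8) gives $\theta>\rho^7$.

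\emph{Step 3: reconciling exponents.} Step 2 yields only $\rho^{-7}$, while the statement claims $2\rho^{-6}$. To gain the missing factor I would use the sharper form proved for (P8), namely $\theta>d^{-\kappa}r^{6\kappa}=d^{-\kappa}\rho^{6}$. Since $d^{\kappa}$ is a fixed constant, this gives $\|C^{-1}\|\leq C'\rho^{-6}$. If the constant cannot be absorbed into the factor $2$, I would slightly shrink the exponent: either take $\alpha n$ in place of $n$, or use $\sqrt2/\sin\theta\leq 2\rho^{-6}$ for $n$ large after replacing $r^{6\kappa}$ by $r^{6\kappa-\epsilon}$ in the angle proof. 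This last point is bookkeeping.

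\emph{Main obstacle.} The key difficulty is Step 2 on the stable side: checking that the pulled-back vectors $u_{-l}$ really are tangent to the stable plaques at the intermediate points. This needs the compatibility of $T^+$ plaques along the orbit, which (P4) states only for $T^-$.
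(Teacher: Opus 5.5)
Your Steps 1--2, and Step 3 up to the bound $\|C^{-1}\|\le \frac{\sqrt2\pi}{2}d^{\kappa}\rho^{-6}$, are the argument the paper has in mind; the paper only says the lemma is a standard consequence of (P8). You are also right that a stable analogue of (P4) is needed to make $u_{-l}$ tangent to the stable plaques. It follows from the same Hurwitz argument, since $f^{-m}(L)\cap\C^2$ is smooth.

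\textbf{The constant $2$.} Your attempt to bring the constant down to $2$ fails. The angle argument reads $\frac32\le\sqrt2\,\|Df^p\|\,\theta$, and for $p=\lceil 6\alpha n\rceil$ one has $\|Df^p\|\le d^{p\kappa}\le d^{\kappa}\rho^{-6}$. So the best it gives is $\theta\ge\frac{3}{2\sqrt2}d^{-\kappa}\rho^{6}$, i.e.\ $\|C^{-1}\|\approx\sqrt2/\theta\lesssim\frac43 d^{\kappa}\rho^{-6}$, and $\frac43 d^{\kappa}>2$ since $d\ge 2$ and $\kappa>1$.

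A bound $\theta>r^{6\kappa-\epsilon}$ is \emph{stronger} than anything this argument produces, so it cannot come from ``replacing $r^{6\kappa}$ by $r^{6\kappa-\epsilon}$ in the angle proof''. It would require redefining $p$, e.g.\ $p=\lceil(6-\epsilon)\alpha n\rceil$ with $\epsilon<1$, which the proof of (P5)--(P6) tolerates. That changes the construction rather than proving the lemma. ``Taking $\alpha n$ in place of $n$'' has no meaning here.

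With the paper's parameters, the literal constant $2\rho^{-6}$ is not what (P8) delivers; the paper's one-line justification has the same mismatch. The right move is to state what you actually proved, $\|C(f^{-l}(x))^{\pm1}\|\le\frac{\sqrt2\pi}{2}d^{\kappa}\rho^{-6}\le\rho^{-7}$. Then check that a fixed power $\rho^{-O(1)}$ suffices where the lemma is used explicitly, which it does:
$\|W\|\le\|C(x)^{-1}\|\rho^{70}\le\rho^{60}/5$;
$|\varepsilon_i|\le\rho^{3}$;
the factor $\|C(x)\|\,\|C(f^{-n}(x))^{-1}\|$ is absorbed by $d^{-n/64}$ for $\alpha$ small.

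\textbf{The case $l=n$.} This case has a further gap. (P8) is stated only for the levels $-jp$, $j=-1,\dots,\ell$, not for $T^\pm_{r,-n}$. At $f^{-n}(x)$ the argument can only be run with $f^{p+s}$, because that is the map linking $f^{-n}(x)$ to $f^{-\ell p}(x)\in\overline\Bc_{-\ell p}$ in (P4)--(P6). It gives $\theta\ge\frac{3}{2\sqrt2}d^{-(p+s)\kappa}$, which for $s$ close to $p$ is only of order $d^{-2\kappa}\rho^{12}$.

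So your claim that (P8) yields $\theta>\rho^7$ at $l=n$ is unsupported. What you can prove there is $\|C(f^{-n}(x))^{\pm1}\|\lesssim d^{2\kappa}\rho^{-12}\le\rho^{-13}$. This weaker bound has to be stated and tracked as such, since it enters $g_n$ and the final contraction estimate. In the latter it is still absorbed by $d^{-n/64}$ for $\alpha$ small.
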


The following lemma is an easy consequence of the compatibility of the plaques under the action of $f^p$ (Property (P4)) together with the contraction estimates given by properties (P5) and (P6). We leave the details to the reader.

\begin{lemma} \label{lemma:u-v-contraction}
For $j=0, \ldots,\ell - 1$,  we have
$$|Df^{-p}(f^{-jp}(x))\cdot v_{-jp}| \leq d^{-\alpha n/4} \quad \text{and} \quad |Df^{-p}(f^{-jp}(x))\cdot u_{-jp}| \geq d^{-\alpha n/4},$$
and 
$$|Df^{-p-s}(f^{-\ell p}(x))\cdot v_{-\ell p}| \leq d^{-\alpha n/4} \quad \text{and} \quad |Df^{-p-s}(f^{-\ell p}(x))\cdot u_{-jp}| \geq d^{-\alpha n/4}.$$
\end{lemma}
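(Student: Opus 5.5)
The plan is to treat the four inequalities in pairs: the $v$-estimates come from backward contraction along unstable plaques, and the $u$-estimates from a determinant argument. Both rely on the fact that the frame vectors $v_{-jp}$ and $u_{-jp}$ stay tangent to the plaques of the currents $\T^\mp_{r,-jp}$ at the points $f^{-jp}(x)$.

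\emph{Unstable direction.} Since $x\in\Bc^*$, we have $f^{-jp}(x)\in\Bc_{-jp}$ for every $j$, so a unique plaque $\Delta^{\u}_{-jp}$ of $\T^-_{r,-jp}$ passes through $f^{-jp}(x)$. I will show by induction on $j$ that $v_{-jp}\in\Tan_{f^{-jp}(x)}\Delta^{\u}_{-jp}$.
\begin{itemize}
\item The case $j=0$ holds by definition of $v_0$.
\item For the inductive step, (P4) applied to the pair $\T^-_{r,-(j+1)p}$ and $\T^-_{r,-jp}$ says that $f^{p}(\Delta^{\u}_{-(j+1)p})$ and $\Delta^{\u}_{-jp}$ share the point $f^{-jp}(x)$, hence they coincide on an open set containing it. Consequently $Df^{-p}$ maps $\Tan_{f^{-jp}(x)}\Delta^{\u}_{-jp}$ onto $\Tan_{f^{-(j+1)p}(x)}\Delta^{\u}_{-(j+1)p}$.
\item The last block uses $f^{p+s}$ and $\T^-_{r,-n}$ in the same way.
\end{itemize}
Because $f^{-jp}(x)$ lies in the support of $\mu_{r,-jp}$, it lies in $(1-(2r)^\gamma)P_{2r}$, i.e.\ in the part of the plaque lying over $\W_{r,\eta_i}$. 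So (P6), via Remark~\ref{rmk:inherit}, applies and gives $|Df^{-p}(f^{-jp}(x))\cdot v_{-jp}|\le d^{-\alpha n/4}$, and likewise with $p+s$ for the last block.

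\emph{Stable direction.} The same induction, now with the stable plaques and (P4) for $T^+$ (the analogous statement for $f^{-1}$), shows that $u_{-jp}$ is tangent to the plaque of $\T^+_{r,-jp}$ through $f^{-jp}(x)$. To start it, note that $u_0=Df^p\widetilde u_0$ and that $f^p(\Delta^{\s}(f^{-p}(x)))$ is compatible with the plaque of $\T^+_{r}$ through $x$. Then (P5) gives
\[
|Df^{p}(f^{-(j+1)p}(x))\cdot u_{-(j+1)p}|\le d^{-\alpha n/4}.
\]
Normalising, this yields $|Df^{-p}(f^{-jp}(x))\cdot u_{-jp}|\ge d^{\alpha n/4}\ge d^{-\alpha n/4}$. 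So the stated inequality for $u$ holds with a large margin, and it is really a consequence of the forward contraction along stable plaques.

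\emph{Main obstacle.} The hardest step is justifying the stable compatibility. (P4) is only stated for the unstable plaques, and the $T^+$ analogue has to be argued by the same Hurwitz argument applied to components of $f^{-m}(L)$. One must also check that the points $f^{-jp}(x)$ lie over the smaller squares $\W_{r,\eta_i}$ where (P5) and (P6) are valid. If the compatibility argument turns out to be too delicate, my fallback is a determinant argument. By (P8), the vectors $u_{-jp}$ and $v_{-jp}$ make an angle at least $\rho^7$. The Jacobian of a Hénon map is constant, so $|\det Df^{-p}|=|\delta|^{-p}$. Then $|Df^{-p}u|\,|Df^{-p}v|\gtrsim \rho^{7}|\delta|^{-p}\|Df^{-p}\|^{-0}$, which combined with $|Df^{-p}v|\le d^{-\alpha n/4}$ gives a lower bound of the form $d^{-C\alpha n}$. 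That is weaker than the claimed $d^{-\alpha n/4}$, which is why I would use it only as a backup.
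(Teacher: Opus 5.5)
Your argument is correct and follows the route the paper indicates, though the paper leaves the details to the reader. You propagate the tangency of $v_{-jp}$ and $u_{-jp}$ to the plaques of $\T^-_{r,-jp}$ and $\T^+_{r,-jp}$ through $f^{-jp}(x)$ using (P4) and its stable analogue, which is proved by the same Hurwitz argument on components of $f^{-m-jp}(L)$. You then apply (P6) directly and (P5) after normalisation, which gives $|Df^{-p}(f^{-jp}(x))\cdot u_{-jp}|\geq d^{\alpha n/4}$. That is the bound actually used later in Lemma~\ref{lemma:g-estimates}, so the determinant fallback is not needed.
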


\subsection*{Local form in Pesin coordinates}

As before, we fix $x \in   \Bc^* \cap \Sc$.  Let $\tau_y: \C^2 \to \C^2$ be the translation by $y \in \C^2$.  For  $j=0, \ldots,\ell - 1$,  we set
\begin{equation} \label{eq:def-g_j}
g_j: = C(f^{-(j+1)p}(x))^{-1} \circ \tau^{-1}_{f^{-(j+1)p}(x)} \circ f^{-p} \circ \tau_{f^{-jp}(x)} \circ C(f^{-jp}(x))
\end{equation}
and for the last step
$$g_n: = C(f^{-n}(x))^{-1} \circ \tau^{-1}_{f^{-n}(x)} \circ f^{-p-s} \circ \tau_{f^{-\ell p}(x)} \circ C(f^{-\ell p}(x)),$$
seen as maps defined in a neighborhood of the  origin in $\C^2$ equipped with the canonical frame.

Notice that $g(0)=0$. Moreover, the differential $Df^{-jp}(x)$ maps ${E^{\s}(f^{-jp}(x)),E^{\u}(f^{-jp}(x))}$ to ${E^{\s}(f^{-(j+1)p}(x)),E^{\u}(f^{-(j+1)p}(x))}$, and $Dg_j(0)$ is precisely its representation in the canonical frame $\{e_1,e_2\}$ centered at the origin:
\begin{align*}
\{e_1,e_2\}
&\xrightarrow{\;\tau_{f^{-jp}(x)}\circ C(f^{-jp}(x))\;}
\{E^{\s}(f^{-jp}(x)),E^{\u}(f^{-jp}(x))\} \\
&\xrightarrow{\;Df^{-jp}(x)\;}
\{E^{\s}(f^{-(j+1)p}(x)),E^{\u}(f^{-(j+1)p}(x))\} \\
&\xrightarrow{\;C(f^{-(j+1)p}(x))^{-1}
\circ \tau^{-1}_{f^{-(j+1)p}(x)}\;}
\{e_1,e_2\}.
\end{align*}

In order to keep track of the different frames involved in the construction we introduce the following notation.

\begin{definition} \label{def:frames}
 We denote by  $\Cc$  the canonical frame $\{e_1,e_2\}$ centered at the origin and by $\Ec_{-jp}$ the  frame $\{E^{\s}(f^{-jp}(x)),E^{\u}(f^{-jp}(x))\}$ centered at $f^{-jp}(x)$ (and similarly for $\Ec_n$). 
\end{definition}

The above maps are related by the following diagram

\begin{center}
\begin{tikzpicture}[>=stealth, node distance=2.5cm]

  \node (A) {$\Ec_0$};
  \node (B) [right of=A] {$\Ec_{-p}$};
  \node (C) [right of=B] {$\Ec_{-2p}$};
  \node (D) [right of=C] {$\cdots$};
  \node (E) [right of=D] {$\Ec_{-\ell p}$};
  \node (F) [right of=E] {$\Ec_{-n}$};

  \node (A2) [below of=A] {$\Cc$};
  \node (B2) [below of=B] {$\Cc$};
  \node (C2) [below of=C] {$\Cc$};
  \node (D2) [below of=D] {$\cdots$};
  \node (E2) [below of=E] {$\Cc$};
  \node (F2) [below of=F] {$\Cc$};

  \draw[->] (A) -- node[above] {$f^{-p}$} (B);
  \draw[->] (B) -- node[above] {$f^{-p}$} (C);
  \draw[->] (C) -- (D);
  \draw[->] (D) -- node[above] {$f^{-p}$} (E);
  \draw[->] (E) -- node[above] {$f^{-p-s}$} (F);

  \draw[->] (A2) -- node[below] {$g_0$} (B2);
  \draw[->] (B2) -- node[below] {$g_1$} (C2);
  \draw[->] (C2) -- (D2);
  \draw[->] (D2) -- node[below] {$g_{\ell-1}$} (E2);
  \draw[->] (E2) -- node[below] {$g_n$} (F2);

  \draw[->] (A2) -- (A);
  \draw[->] (B2) -- (B);
  \draw[->] (C2) -- (C);
  \draw[->] (E2) -- (E);
  \draw[->] (F2) -- (F);
\end{tikzpicture}
\end{center}
where the vertical arrows denote the composition of the linear maps $C(\cdot)$ with the suitable translations (cf. \eqref{eq:def-g_j}).

We should see the composition $g_n \circ g_{\ell -1} \circ \cdots g_1 \circ g_0$ as a local representation of $f^{-n}$ around the origin of $\C^2$ after going through the linear coordinate changes given by the above translations and Pesin matrices.

\begin{lemma} \label{lemma:g-estimates}
Let $g$ be one the maps $g_j$ or $g_n$ above.  Then,  $g(0) = 0$ and the following estimates hold
\begin{enumerate}
\item $Dg(0) = \begin{pmatrix}
A & 0 \\ 0 & B
\end{pmatrix}$, with $|A| \geq d^{\alpha n /4}$ and $|B| \leq d^{-\alpha n /4}$;
\item $\|D^2 g (x)\| \leq \rho^{-33}$ when  $\|w\| \leq \rho^{14}$.
\item One can write $g(w) = Dg(0) \cdot w + h(w)$ for some holomorphic map $h$ on $\|w\| < 2 \rho^{14}$ such that $$\|Dh(w)\| \leq \frac{\|w\|}{\rho^{33}} \quad \text{when} \quad  \|w\| \leq \rho^{14}.$$
\end{enumerate}
\end{lemma}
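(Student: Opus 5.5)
We prove the three items in order; (1) is a linear-algebra statement about the frames, and (2)–(3) follow from Cauchy estimates combined with the bounds on the Pesin matrices.

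\textbf{Item (1).} By construction $g(0)=0$, since the translations send the base points $f^{-jp}(x)$ to the origin and $f^{-p}$ (resp.\ $f^{-p-s}$) maps $f^{-jp}(x)$ to $f^{-(j+1)p}(x)$. The vectors $u_{-l}$ and $v_{-l}$ are defined by pushing $u_0$ and $v_0$ forward by $Df^{-l}(x)$ and normalising. Hence $Df^{-p}(f^{-jp}(x))$ maps $\vect(u_{-jp})$ to $\vect(u_{-(j+1)p})$ and $\vect(v_{-jp})$ to $\vect(v_{-(j+1)p})$. Conjugating by $C(\cdot)$ therefore gives a diagonal matrix in the canonical frame, with entries
$$A=|Df^{-p}(f^{-jp}(x))\cdot u_{-jp}| \quad\text{and}\quad B=|Df^{-p}(f^{-jp}(x))\cdot v_{-jp}|,$$
up to unimodular factors coming from the normalisations. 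Lemma \ref{lemma:u-v-contraction} then gives the bound on $|B|$ directly. For $|A|$ the lemma as stated only gives $|A|\geq d^{-\alpha n/4}$, so we go back to its source. The stable direction is the tangent to a plaque of $\T^+_{r,-(j+1)p}$, whose image under $f^p$ lies in a plaque of $\T^+_{r,-jp}$ by the stable analogue of (P4). By (P5) applied at $f^{-(j+1)p}(x)$, $f^p$ contracts this direction by $d^{-\alpha n/4}$, so $f^{-p}$ expands $u_{-jp}$ by at least $d^{\alpha n/4}$. The case of $g_n$, with $p+s$ in place of $p$, is identical.

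\textbf{Item (2).} Write $D^2g(w)=C_{j+1}^{-1}\, D^2 f^{-p}(y)\,(C_j\cdot,C_j\cdot)$, where $C_j:=C(f^{-jp}(x))$ and $y=f^{-jp}(x)+C_j w$. Lemma \ref{lemma:pesin-matrix} gives $\|C_j^{\pm1}\|\leq 2\rho^{-6}$, so the prefactors contribute at most $8\rho^{-18}$.

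For $D^2f^{-p}$, note that $\|C_j w\|\leq 2\rho^{8}$ when $\|w\|\leq\rho^{14}$, so $y$ stays in a small neighbourhood of $J^*\subset \D(0;R)^2$. On a slightly larger neighbourhood, $\|Df^{\pm1}\|\leq d^{\kappa}$ by \eqref{eq:kappa-def}. Hence $f^{-p}$ is $d^{\kappa p}$-Lipschitz on a polydisc of fixed radius $\delta$ around $y$ (after enlarging $R$ if needed), and its image has size at most $d^{\kappa p}\delta$. Cauchy estimates on a polydisc of radius $\delta'$ then give
$$\|D^2f^{-p}(y)\|\lesssim \frac{d^{\kappa p}}{\delta'}.$$
With $p=\ceil{6\alpha n}$ we have $d^{\kappa p}\leq d^{\kappa}r^{-6\kappa}= d^{\kappa}\rho^{-6}$, so $\|D^2f^{-p}(y)\|\lesssim \rho^{-6}$. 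For $g_n$ the exponent is $p+s\leq 2p$, giving $\rho^{-12}$ instead. Altogether
$$\|D^2g\|\lesssim \rho^{-18}\rho^{-12}=\rho^{-30}\leq \rho^{-33},$$
where $n$ large absorbs the constants.

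\textbf{Item (3).} Set $h(w):=g(w)-Dg(0)w$. Then $Dh(0)=0$, and integrating $D^2g$ along the segment from $0$ to $w$ gives $\|Dh(w)\|\leq \sup\|D^2g\|\,\|w\|\leq \|w\|\rho^{-33}$. Holomorphy of $h$ on $\|w\|<2\rho^{14}$ holds because $f^{-1}$ is a polynomial map, hence entire.

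\textbf{Main obstacle.} The delicate point is the Cauchy estimate for $D^2f^{-p}$. Naively, the domain on which $f^{-p}$ is controlled shrinks like $d^{-\kappa p}$, which would make the estimate much worse. The remedy is to use the global Lipschitz bound of $f^{-1}$ on a neighbourhood of the filled Julia set, as above. This does require that the relevant iterates $f^{-i}(y)$, $0\leq i\leq p$, remain in a fixed neighbourhood of $\overline{\D(0;R)^2}$. That holds because $y$ is $\rho^{8}$-close to the orbit of $x\in J^*$: by the Lipschitz bound, the $i$-th backward iterates stay within $d^{\kappa p}\rho^{8}\leq \rho^{2}$ of the orbit of $x$.
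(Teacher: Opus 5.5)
\textbf{Items (1) and (3) are fine; the gap is in item (2).} You correctly spotted the sign typo in Lemma \ref{lemma:u-v-contraction}, and recovering $|A|\ge d^{\alpha n/4}$ from (P5) at $f^{-(j+1)p}(x)$ is the right repair.

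\textbf{The Lipschitz claim is false.} You assert that $f^{-p}$ is $d^{\kappa p}$-Lipschitz on a polydisc of \emph{fixed} radius $\delta$ around $y$. But $\|Df^{-1}\|\le d^{\kappa}$ only holds on a fixed neighbourhood $U$ of $\overline{\D(0;R)^2}$. The $i$-th backward image of $\B(y,\delta)$ can have size $d^{\kappa i}\delta$, so it leaves $U$ after boundedly many steps. Worse, $\B(y,\delta)$ contains a neighbourhood of $f^{-jp}(x)\in J^*\subset\partial K^-$. It therefore contains points with $G^-\ge c>0$, whose backward orbits grow like $\exp(c\,d^{i})$. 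Hence $\sup_{\B(y,\delta)}\|f^{-p}\|$ is doubly exponential in $p$, and a Cauchy estimate on a fixed polydisc gives nothing. Your ``main obstacle'' paragraph controls only the orbit of the single point $y$, which is not what that Cauchy estimate requires.

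\textbf{The resulting bound is also wrong in general.} The estimate $\|D^2f^{-p}(y)\|\lesssim d^{\kappa p}$ fails: second derivatives of iterates grow like the \emph{square} of the expansion. Already for $z\mapsto\lambda z+z^2$, the $N$-th iterate has second derivative of order $\lambda^{2N}$ at $0$.

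\textbf{What the orbit control actually gives.} Controlling the orbit of $y$ does yield, via the chain-rule recursion
$\|D^2f^{-k}\|\le\|D^2f^{-1}\|\,\|Df^{-(k-1)}\|^2+\|Df^{-1}\|\,\|D^2f^{-(k-1)}\|$
along that orbit (or Cauchy on a ball of radius $\sim d^{-\kappa k}$), the bound $\|D^2f^{-k}(y)\|\lesssim d^{2\kappa k}$.
\begin{itemize}
\item For $g_j$ ($k=p$), your prefactor $8\rho^{-18}$ gives $\rho^{-18}\rho^{-12}$, which is fine.
\item For $g_n$, $k=p+s$ can be close to $2p$, so you get $\rho^{-18}\rho^{-24}=\rho^{-42}$, which misses $\rho^{-33}$.
\item Your closeness check $d^{\kappa p}\rho^{8}\le\rho^{2}$ was only done for $k=p$. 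For $k=p+s$ it becomes $d^{\kappa(p+s)}\cdot 2\rho^{8}\approx\rho^{-4}$, i.e.\ no control on the orbit of $y$ at all.
\end{itemize}

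\textbf{The missing observation.} $\|C(\cdot)\|\le\sqrt2$, because its columns are the unit vectors $u_{-l},v_{-l}$; only $C^{-1}$ costs $2\rho^{-6}$. With this:
\begin{itemize}
\item $\|C_jw\|\le\sqrt2\,\rho^{14}$, so all backward iterates up to time $p+s$ stay within $\lesssim d^{\kappa(p+s)}\rho^{14}\lesssim\rho^{2}$ of the orbit of $x$.
\item $\|D^2g\|\le\|C_{j+1}^{-1}\|\,\|C_j\|^2\,\|D^2f^{-k}\|\lesssim\rho^{-6}\rho^{-24}=\rho^{-30}\le\rho^{-33}$ for $n$ large.
\end{itemize}
You also need $\kappa$ to bound $\|Df^{-1}\|$ on a neighbourhood, not just $\|Df\|$ on $\overline{\D(0;R)^2}$ as in \eqref{eq:kappa-def}. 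This is a harmless redefinition. With these corrections, your item (3) follows exactly as you wrote it.
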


\begin{proof}
Given the estimates from Lemmas \ref{lemma:pesin-matrix} and  \ref{lemma:u-v-contraction},  the proof follows  arguments parallel to those in  \cite[Proposition 1.1]{dethelin-nguyen}.  We leave the details to the reader.
\end{proof}

We shall repeatedly use the following result of the first named author and Nguyen.  The original result is valid in any dimension, but we state it for $\C^2$ with the standard euclidean coordinates for simplicity.

\begin{theorem} \cite[Théorème 3]{dethelin-nguyen} \label{thm:DN}
Let $g(X,Y) = (AX+R(X,Y), BY + U(X,Y))$ be a holomorphic map between open balls $\B(0;R_0)$ and  $\B(0;R_1)$ in $\C^2$ with $R_0 \leq R_1$ such that $g(0) = 0$ and $\max\{\|DR(Z)\|, \|DU(Z)\|\} \leq \delta$ over  $\B(0;R_0)$.  Suppose $A \neq 0$ and $|B| < |A|$.  Denote by $\xi = 1 - |B| |A^{-1}| \in ]0,1]$.

Let ${(X,\varphi(X)),,X\in D}$ be a graph contained in $\B(0,R_0)$, over a subset $D$ of $E_1:=\{Y=0\}$, satisfying $\Lip(\varphi)\leq \gamma_0\leq 1$. Assume that $\delta |A^{-1}|(1+\gamma_0)<1$. Then the image of this graph by $g$ is a graph over $\pi_0\bigl(g(\operatorname{graph}\varphi)\bigr)$, where $\pi_0$ denotes the projection onto the $E_1$-coordinates. 

More precisely, if $(X,\psi(X))$ denotes the resulting graph, then $$|\psi(X_1)-\psi(X_2)|\leq\frac{|B|\gamma_0+\delta(1+\gamma_0)}{|A|-\delta(1+\gamma_0)}|X_1-X_2|.$$
 In particular, this Lipschitz constant is at most $\gamma_0$ provided $\delta\leq\varepsilon(\gamma_0,\xi)$. 
 
 Finally, suppose that $\D(0,\alpha)\subset D$ and $|\varphi(0)|\leq\beta$. Then $\pi_0\bigl(g(\operatorname{graph}\varphi)\bigr)$ contains $ \D \left(0,\big(|A|-\delta(1+\gamma_0)\big)\alpha-\delta\beta\right)$, and $$|\psi(0)|\leq(1+\gamma_0)\big(|B|\beta+\delta\beta+\|D^2g\|_{\B(0,R_0)}\beta^2\big),$$ provided  $\delta\leq\varepsilon(\gamma_0,\xi)$.
\end{theorem}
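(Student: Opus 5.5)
The statement is the graph-transform result of \cite{dethelin-nguyen}. I would prove it directly, by writing the image graph as a composition $G\circ F^{-1}$ and controlling $F$ from below and $G$ from above. Throughout I use the normalization implicit in the statement: $R(0)=U(0)=0$ and $DR(0)=DU(0)=0$. This is consistent with $g(0)=0$ with the linear part being exactly $\mathrm{diag}(A,B)$.

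\emph{Injectivity and the Lipschitz bound.} Put $Z(X):=(X,\varphi(X))$ and
$$F(X):=AX+R(Z(X)),\qquad G(X):=B\varphi(X)+U(Z(X)),$$
so that $g(Z(X))=(F(X),G(X))$. The domain $D$ need not be convex, but the ball $\B(0;R_0)$ is. The segment joining $Z(X_1)$ and $Z(X_2)$ therefore stays in $\B(0;R_0)$, and the mean value inequality along it gives
$$|R(Z(X_1))-R(Z(X_2))|\le \delta|Z(X_1)-Z(X_2)|\le \delta(1+\gamma_0)|X_1-X_2|.$$
The same bound holds for $U$. Hence
$$|F(X_1)-F(X_2)|\ge (|A|-\delta(1+\gamma_0))|X_1-X_2|>0,$$
where positivity comes from the hypothesis $\delta|A^{-1}|(1+\gamma_0)<1$. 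So $F$ is injective, the image is the graph of $\psi:=G\circ F^{-1}$ over $F(D)=\pi_0(g(\operatorname{graph}\varphi))$, and
$$|G(X_1)-G(X_2)|\le (|B|\gamma_0+\delta(1+\gamma_0))|X_1-X_2|.$$
Dividing the two inequalities gives the stated Lipschitz constant for $\psi$. The requirement that this constant be $\le\gamma_0$ rearranges to $\delta(1+\gamma_0)^2\le \gamma_0(|A|-|B|)=\gamma_0\xi|A|$. This defines $\varepsilon(\gamma_0,\xi)$; in the intended applications $|A|\ge1$, so the factor $|A|$ can be dropped.

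\emph{Size of the base.} First, $|F(0)|=|R(0,\varphi(0))-R(0,0)|\le\delta\beta$. The map $F$ is holomorphic and injective on $\D(0,\alpha)$, hence open. For every $|X|=\alpha'<\alpha$ the lower bound gives $|F(X)-F(0)|\ge(|A|-\delta(1+\gamma_0))\alpha'$. By the argument principle (or invariance of domain), $F(\D(0,\alpha))$ then contains the disc of radius $(|A|-\delta(1+\gamma_0))\alpha$ centred at $F(0)$. This disc in turn contains $\D\big(0,(|A|-\delta(1+\gamma_0))\alpha-\delta\beta\big)$, as claimed.

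\emph{Bound on $\psi(0)$.} Let $X_0:=F^{-1}(0)$. The same lower bound yields
$$|X_0|\le \frac{\delta\beta}{|A|-\delta(1+\gamma_0)},$$
which is a small multiple of $\beta$ once $\delta\le\varepsilon$. Then $\psi(0)=B\varphi(X_0)+U(Z(X_0))$, and I estimate the two terms separately.
\begin{itemize}
\item For the first term, $|\varphi(X_0)|\le\beta+\gamma_0|X_0|$, so $|B\varphi(X_0)|\le(1+\gamma_0)|B|\beta$ after absorbing $\gamma_0|X_0|\le\gamma_0\beta$.
\item For the second term, split $U(Z(X_0))=U(0,\varphi(0))+[U(Z(X_0))-U(0,\varphi(0))]$. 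The bracket is at most $\delta(1+\gamma_0)|X_0|\lesssim\delta\beta$. For $U(0,\varphi(0))$, Taylor's formula with $U(0)=0$ and $DU(0)=0$ gives $|U(0,\varphi(0))|\le\|D^2g\|_{\B(0,R_0)}\beta^2$.
\end{itemize}
Collecting these with the factor $(1+\gamma_0)$ gives the stated bound.

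The main obstacle I anticipate is this last constant bookkeeping. The displayed bound $(1+\gamma_0)(|B|\beta+\delta\beta+\|D^2g\|\beta^2)$ is tight, so I must check that the leftover terms $|B|\gamma_0|X_0|$ and $\delta(1+\gamma_0)|X_0|$ are genuinely absorbed under $\delta\le\varepsilon(\gamma_0,\xi)$, shrinking $\varepsilon$ if needed. The covering step also needs care at the boundary of $D$: the statement assumes only $\D(0,\alpha)\subset D$, so I would argue on discs $\D(0,\alpha')$ with $\alpha'<\alpha$ and let $\alpha'\to\alpha$.
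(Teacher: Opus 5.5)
The paper does not prove this statement. It is quoted from \cite[Th\'eor\`eme 3]{dethelin-nguyen} and used as a black box, so your argument is an alternative to a citation rather than to a written proof. It is correct and self-contained in dimension two:
\begin{itemize}
\item you write the image as the graph of $G\circ F^{-1}$;
\item you apply the mean value inequality along segments in the convex ball $\B(0;R_0)$ rather than in $D$;
\item you obtain the base disc from injectivity plus the argument principle or invariance of domain, the latter also covering merely Lipschitz $\varphi$;
\item you locate $X_0=F^{-1}(0)$ via $|X_0|\le \delta\beta/(|A|-\delta(1+\gamma_0))$.
\end{itemize}
The citation has the advantage of handling arbitrary dimension and matrix-valued $A,B$. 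Your route instead makes visible two normalizations that the transcribed statement suppresses. First, the smallness condition is really $\delta(1+\gamma_0)^2\le\gamma_0\xi|A|$, i.e.\ relative to $|A|$. This is harmless in the paper, where $|A|\ge d^{\alpha n/4}$ in Lemma~\ref{lemma:g-Gamma} and $|A|\approx 1$ in the claim inside Proposition~\ref{prop:graph-transform}. Second, your Taylor step for $U(0,\varphi(0))$ needs $DU(0)=0$.

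The bookkeeping you single out as the main obstacle is not one. Once $|X_0|\le\beta$, which holds as soon as $\delta(2+\gamma_0)\le|A|$, your three pieces are bounded by $(1+\gamma_0)|B|\beta$, $(1+\gamma_0)\delta\beta$ and $\tfrac12\|D^2g\|\beta^2$. The displayed bound then follows with nothing left over.

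What deserves more attention is that the paper also applies the theorem in the claim inside the proof of Proposition~\ref{prop:graph-transform}. There $R=\varepsilon_3Y$ and $U=\varepsilon_2X$, so $DU(0)\neq0$ and your Taylor step is unavailable. In that case:
\begin{itemize}
\item bound $|U(0,\varphi(0))|\le\delta\beta$ by the mean value inequality;
\item use $|X_0|\le\frac{\gamma_0}{1+\gamma_0}\beta$, which follows from the same condition $\delta(1+\gamma_0)^2\le\gamma_0|A|$.
\end{itemize}
This gives $|\psi(0)|\le(1+\gamma_0)(|B|+\delta)\beta$ whenever $\gamma_0>0$. For $\gamma_0=0$ the stated bound can fail if $DU(0)\neq0$, so some normalization of this kind is genuinely required.
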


We now give the construction of the graph transform $\Lambda: \Gc_\Sc \to \Gc_\Sc$.   Recall that $x \in   \Bc^* \cap \Sc$ is fixed and $(z,w)$ are the  affine coordinates centered at $x$ with respect to the frame $\Tan_x \Delta^{\s} \oplus \Tan_x \Delta^{\u} $.   Fix an element $\Gamma = \{( z,\varphi(z) )\} \in \Gc_\Sc$.  In the  notation introduced above,  $\Gamma$ is a holomorphic curve parametrized by
$$ z \longmapsto x + z u_0 + \varphi(z) v_0 \quad \text{for} \quad |z| \leq 2\rho^{60}.$$

Recall that $$g_0 = C(f^{-p}(x))^{-1} \circ \tau^{-1}_{f^{-p}(x)} \circ f^{-p} \circ \tau_{x} \circ C(x).$$  Let $\widetilde \Gamma$ be the image of $\Gamma$ under $C(x)^{-1} \circ \tau_x^{-1}$.  Then $\Gamma$ lives in $\C^2$ with the canonical frame with coordinates $(X,Y)$ and is parametrized by
$$ X \longmapsto  X e_1 + \varphi(X) e_2 \quad \text{for} \quad |X| \leq 2\rho^{60}.$$

\begin{lemma} \label{lemma:g-Gamma}
Fix $\Gamma \in \Gc_\Sc$ and let  $g_0$ and $\widetilde \Gamma$  be as above.  Then $g_0(\widetilde \Gamma)$ is a graph in the canonical frame of $\C^2$ of the form $\widetilde \Gamma_{p} = \{(X,\varphi_{-p}(X))\}$ where $\varphi_{-p}$ is defined on a disc containing $|X| < 20 \rho^{60}$. Moreover $\Lip \varphi_{-p} \leq 1 / 2$ and $|\varphi_{-p}(0)| \leq \rho^{60} / 10$.
\end{lemma}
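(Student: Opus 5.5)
The plan is to apply Theorem~\ref{thm:DN} directly to $g=g_0$, using Lemma~\ref{lemma:g-estimates} to check its hypotheses. By Lemma~\ref{lemma:g-estimates}(1), in the canonical frame $\Cc$ we have $g_0(X,Y)=(AX+R(X,Y),BY+U(X,Y))$ with $|A|\geq d^{\alpha n/4}$, $|B|\leq d^{-\alpha n/4}$, and $(R,U)=h$. We work on the ball $\B(0;R_0)$ with $R_0:=\rho^{59}$, say. This radius is much smaller than $\rho^{14}$, and it contains $\widetilde\Gamma$: the points of $\widetilde\Gamma$ satisfy $|X|\leq 2\rho^{60}$ and $|\varphi(X)|\leq |\varphi(0)|+\tfrac34|X|\leq 3\rho^{60}$. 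By Lemma~\ref{lemma:g-estimates}(3), $\delta:=\sup_{\B(0;R_0)}\|Dh\|\leq R_0\rho^{-33}=\rho^{26}$, which is exponentially small in $n$.

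\textbf{Step 1: the graph hypothesis.} The constant in Theorem~\ref{thm:DN} is $\xi=1-|B||A|^{-1}\geq 1-d^{-\alpha n/2}\geq 1/2$. We take $\gamma_0=3/4$. Then
\[
\delta|A^{-1}|(1+\gamma_0)\leq 2\rho^{26}d^{-\alpha n/4}<1,
\]
so $g_0(\widetilde\Gamma)$ is a graph $\{(X,\varphi_{-p}(X))\}$ over $\pi_0(g_0(\widetilde\Gamma))$.

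\textbf{Step 2: the Lipschitz bound.} Theorem~\ref{thm:DN} gives
\[
\Lip\varphi_{-p}\leq \frac{\tfrac34|B|+\tfrac74\delta}{|A|-\tfrac74\delta}\leq \frac{d^{-\alpha n/4}+2\rho^{26}}{d^{\alpha n/4}-2}.
\]
The right-hand side is exponentially small, so in particular it is at most $1/2$ once $n$ is large.

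\textbf{Step 3: size of the domain.} Apply the last part of Theorem~\ref{thm:DN} with radius $2\rho^{60}$ (the domain of $\varphi$) and $\beta=\rho^{60}$. The domain of $\varphi_{-p}$ then contains a disc of radius
\[
(|A|-2\delta)\,2\rho^{60}-\delta\rho^{60}\geq \rho^{60}\big(2d^{\alpha n/4}-5\rho^{26}\big)\geq 20\rho^{60}.
\]

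\textbf{Step 4: the value at the origin.} Lemma~\ref{lemma:g-estimates}(2) gives $\|D^2g_0\|\leq\rho^{-33}$ on $\B(0;R_0)$. Then
\[
|\varphi_{-p}(0)|\leq \tfrac74\big(d^{-\alpha n/4}\rho^{60}+\rho^{26}\rho^{60}+\rho^{-33}\rho^{120}\big)\leq \rho^{60}/10,
\]
because every term is $\rho^{60}$ times an exponentially small factor: $\rho^{-33}\rho^{120}=\rho^{87}$.

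\textbf{Main obstacle.} The delicate point is making the smallness condition $\delta\leq\varepsilon(\gamma_0,\xi)$ compatible with the choice of $R_0$. The nonlinearity bound $\|Dh(w)\|\leq\|w\|\rho^{-33}$ only helps if the ball is much smaller than $\rho^{33}$. It must nevertheless contain $\widetilde\Gamma$, and the quadratic term $\|D^2g_0\|\beta^2$ must stay below $\beta$. This is exactly why the box size $\rho^{60}$ was chosen so far below $\rho^{33}$. I also need the Pesin matrix $C(x)$ not to distort the graph. Here this is automatic, since $\Gamma$ is already expressed in the coordinates of the frame $\Ec_0$, so $\widetilde\Gamma$ is literally the graph of $\varphi$ in $\Cc$. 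Likewise $g_0$ is defined on $\|w\|<2\rho^{14}$, which contains $\B(0;R_0)$ by Lemma~\ref{lemma:g-estimates}(3).
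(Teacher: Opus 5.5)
Your proof is correct and follows the paper's argument: both apply Theorem~\ref{thm:DN} to $g_0$, with the bounds of Lemma~\ref{lemma:g-estimates}, $\alpha=2\rho^{60}$ and $\beta=\rho^{60}$. The only differences are parameter choices: you take $R_0=\rho^{59}$ (so $\delta\le\rho^{26}$) and $\gamma_0=3/4$ where the paper takes $R_0=\rho^{35}$, $\delta=\rho^2$ and $\gamma_0=1$, and your observation that $\widetilde\Gamma$ is literally the graph of $\varphi$ in $\Cc$ makes the paper's appeal to Lemma~\ref{lemma:pesin-matrix} for the containment unnecessary.
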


\begin{proof}
We will use Theorem \ref{thm:DN} and keep its notation.  From Lemma \ref{lemma:g-estimates},  one can write $$g(X,Y) = (AX+R(X,Y), BY + U(X,Y),$$
in the canonical frame of $\C^2$,  where $|A| \geq d^{\alpha n /4}$,  $|B| \leq d^{-\alpha n /4}$ and $R(0,0) = U(0,0) = 0$.  By Lemma \ref{lemma:pesin-matrix},  the graph $\Gamma_0$ is contained in the ball $\B(0;R_0)$ in $\C^2$ with $R_0= \rho^{35}$ and,  again by Lemma \ref{lemma:g-estimates},  we have $\|DR(Z)\| \leq \rho^2$ for all $Z \in \B(0;R_0)$ and similarly for $\|DU(Z)\|$.

We now apply Theorem \ref{thm:DN} with $\delta = \rho^2 $,  $\gamma_0 = 1$,  $\alpha = 2\rho^{60}$ and $\beta = \rho^{60}$.  Recall  that $d^{-\kappa \alpha n}$. Observe first that $\delta |A^{-1}|(1+\gamma_0) \leq 2 \rho^2 d^{-\alpha n / 4} \leq 1$ since $n$ is large.  By Theorem \ref{thm:DN},  $g_0(\Gamma_0)$ is a graph of the form $(X,\varphi_{-p}(X))$ over a disc of radius at least
$$\big(|A|-\delta(1+\gamma_0)\big)\alpha-\delta\beta \geq 2 \big(d^{\alpha n /4} - \rho^2\big) \rho^{60} - \rho^{62} \geq 10 \rho^{60}$$
with 
$$|\varphi_{-p}(0)| \leq (1+\gamma_0)\big(|B|+\delta+\|D^2g\|_{\B(0,R_0)}\beta\big) \beta \leq 2 \big( d^{-\alpha n /4}  + \rho^2 +   \rho^{20} \big) \rho^{60} \leq \rho^{60} / 10.$$

Moreover,  the Lipschitz constant of $\varphi_{-p}$ is bounded by 
$$\frac{|B| + 2\delta}{|A|-2\delta} \leq \frac{d^{-\alpha n /4} + \rho^2}{d^{\alpha n /4} - \rho^2} \leq \frac12,$$
since $n$ is large,  thus completing the proof of the lemma.
\end{proof}

Observe that $\widetilde \Gamma_{p}$ is defined on the larger disc $|X|<20\rho^{60}$. We may therefore restrict it to the original domain $|X|<2\rho^{60}$ and apply $g_1$. Applying the above lemma to $g_1$ instead of $g_0$ yields a new graph $\widetilde \Gamma_{2p} = \{(X,\varphi_{-2p}(X))\}$, where $\varphi_{-2p}$ is defined on $|X|<20\rho^{60}$, satisfies $\Lip \varphi_{-2p}\leq 1/2$, and $|\varphi_{-2p}(0)| \leq \rho^{60}/10$.  Repeating this procedure along the whole composition $g_n \circ g_{\ell -1} \circ \cdots g_1 \circ g_0$ yields a graph $$\widetilde \Gamma_n :=\{(X,\varphi_n(X))\},$$ where $\varphi_n$ is defined on $|X|<20\rho^{60}$, satisfying $\Lip ( \varphi_n ) \leq 1/2$, and $|\varphi_n(0)| \leq \rho^{60}/10$.

With respect to the frames in Definition \ref{def:frames} (see also the diagram after the definition),  the initial graph $\Gamma$ lives in $\Ec_0$ while the graphs $\widetilde \Gamma_0, \widetilde \Gamma_{p}, \ldots, \widetilde \Gamma_{n}$ all live in $\Cc$.

From the construction above and the preceding remark, we obtain a graph $\widetilde \Gamma_n$ in $\Cc$, which can naturally be transported to $\Ec_{-n}$ via the last vertical arrow above.  However, in order to iterate the construction, we would like to view this graph in the initial frame $\Ec_0$ and show that it keeps the initial properties of $\Gamma$.  The following lemma shows that this can be done provided that $f^{-n}(x)$ is sufficiently close to $x$.

\begin{proposition} \label{prop:graph-transform}
Let $x \in   \Bc^* \cap \Sc$.  Fix $\Gamma \in \Gc_\Sc$ and let $\widetilde \Gamma_n  =\{(X,\varphi_n(X))\}$ be the graph obtained be the above procedure.   Let $\Gamma_n$ be the image of $\widetilde \Gamma_n$ under $\tau_{f^{-n}(x)} \circ C(f^{-n}(x))$  in the frame $\Ec_{-n}$. Denote as before by $(z,w)$ the coordinates relative to $\Ec_0$.  Assume that $\dist(x,f^{-n}(x)) < \rho^{70}$.  Then, $\Gamma_n$ is a graph of the form $(z,\Phi(z))$  over the disc $|z| \leq 2\rho^{60}$ with Lipschitz constant bounded by $3/4$ and with $|\Phi(0)| \leq \rho^{60}$.  In other words,  $\Gamma_n \in \Gc_\Sc$.
\end{proposition}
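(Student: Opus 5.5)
The plan is to compare the two frames $\Ec_{-n}$ (centered at $f^{-n}(x)$, spanned by $u_{-n},v_{-n}$) and $\Ec_0$ (centered at $x$, spanned by $u_0,v_0$), and to show that the affine change of coordinates $\Theta$ from $\Ec_{-n}$ to $\Ec_0$ is a small perturbation of the identity at the scale $\rho^{60}$. Then the good properties of $\widetilde\Gamma_n$ transfer to $\Gamma_n$: it is a graph over $|X|<20\rho^{60}$ with $\Lip\varphi_n\le 1/2$ and $|\varphi_n(0)|\le\rho^{60}/10$. Concretely, $\Theta = C(x)^{-1}\circ\tau_x^{-1}\circ\tau_{f^{-n}(x)}\circ C(f^{-n}(x))$. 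So $\Theta(Z) = MZ + c$, with linear part $M = C(x)^{-1}C(f^{-n}(x))$ and translation $c = C(x)^{-1}(f^{-n}(x)-x)$. By Lemma \ref{lemma:pesin-matrix}, $|c|\le 2\rho^{-6}\rho^{70} = 2\rho^{64}$, which is negligible compared with $\rho^{60}$.

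The key step is to show $\|M - \mathrm{Id}\|\le \rho^{2}$ (any fixed small power suffices). It is enough to show that $E^{\s}(f^{-n}(x))$ is $\rho^{10}$-close in angle to $E^{\s}(x)=\Tan_x\Delta^{\s}$, and likewise for $E^{\u}$.

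For the unstable direction, I would use that $x$ and $f^{-n}(x)$ both lie in $\Bc^*$-type sets at distance $<\rho^{70}<\rho^{40}$. By (P4), $v_{-n}$ is tangent to the plaque of $\T^-_{r,-n}$ through $f^{-n}(x)$. Proposition \ref{prop:angle-holder}, or its analogue comparing plaques of $\T^-_{r,-n}$ and $\T^-_{r}$ over the same stable box, together with the fact that both plaques are graphs over $D$ with slopes bounded by $\rho^{-10}$ and nested in the same lamination structure, then gives an angle $<\rho^{10}$. The stable direction $u_{-n}$ is handled the same way: it is tangent to the stable plaque at $f^{-n}(x)$ by the compatibility (P4) applied to $\T^+$ and the definition of $u_{-l}$ as iterates of $u_0$.

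Once both angles are $<\rho^{10}$, I write $u_{-n} = a u_0 + b v_0$ with $|b|\lesssim \rho^{10}\|C(x)^{-1}\|\le \rho^{3}$ and $|a-1|$ similarly small after normalization, and do the same for $v_{-n}$. This gives $\|M-\mathrm{Id}\|\le\rho^2$.

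Given this, I finish by a direct graph computation. Parametrize $\Gamma_n$ in $\Ec_0$ as $X\mapsto \Theta(X,\varphi_n(X)) = (X + \epsilon_1(X) + c_1,\ \varphi_n(X)+\epsilon_2(X)+c_2)$, where $\epsilon_1,\epsilon_2$ are Lipschitz with constant $\le 2\rho^2$ and satisfy $|\epsilon_i(0)|\le \rho^{62}$. The first coordinate map is a small Lipschitz perturbation of the identity. It is therefore injective, and its image contains $|z|\le 2\rho^{60}$ because the domain is $|X|<20\rho^{60}$. So $\Gamma_n$ is a graph $(z,\Phi(z))$ over $|z|\le 2\rho^{60}$. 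Its Lipschitz constant is at most $(1/2+2\rho^2)/(1-2\rho^2)\le 3/4$. Evaluating at the preimage $X_0$ of $z=0$, which satisfies $|X_0|\lesssim\rho^{62}$, gives $|\Phi(0)|\le \rho^{60}/10 + O(\rho^{62})\le \rho^{60}$.

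The main obstacle is the angle comparison at $x$ versus $f^{-n}(x)$. The frames come from plaques of different currents ($\T^\pm_r$ versus $\T^\pm_{r,-n}$), so Proposition \ref{prop:angle-holder} must be applied across these currents. I would first try to reduce to plaques lying in a common stable box and invoke the Hölder holomorphic-motion estimate of Proposition \ref{prop:holo-motion} with Cauchy estimates on the $\rho^{26}$ scale. This yields derivative closeness of order $\rho^{70\theta_-}\rho^{-26}\ll\rho^{10}$.
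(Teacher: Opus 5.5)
Your proposal is correct and follows essentially the same route as the paper. The frame change $C(x)^{-1}\circ\tau_x^{-1}\circ\tau_{f^{-n}(x)}\circ C(f^{-n}(x))$ splits into the linear part $C(x)^{-1}C(f^{-n}(x))$, which is a small perturbation of the identity by the angle comparison of Proposition~\ref{prop:angle-holder} together with $\|C(x)^{-1}\|\le 2\rho^{-6}$, plus the negligible translation $C(x)^{-1}(f^{-n}(x)-x)$. The graph conditions are then checked: you do this by a direct Lipschitz-perturbation computation, whereas the paper applies Theorem~\ref{thm:DN} to the linear part and then adds the translation.

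You point out that the frames at $x$ and $f^{-n}(x)$ come from plaques of different currents ($\T^\pm_r$ versus $\T^\pm_{r,-n}$, with $f^{-n}(x)$ not necessarily in $\Bc^*$). The paper passes over this by citing Proposition~\ref{prop:angle-holder} directly. In your Harnack--Cauchy fix, the vertical offset of the two plaques above $\pi_D(x)$ is $O(\rho^{60})$ rather than $O(\rho^{70})$, because of the slope bound $\rho^{-10}$. This still gives $\rho^{60\theta_-}\rho^{-26}\ll\rho^{10}$, since $N$ was chosen so that $\theta_-$ is close to $1$.
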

 
 \begin{proof}
 We'll first work within the frame $\Cc$ via the two coordinate changes $\tau_{f^{-n}(x)} \circ C(f^{-n}(x))$ and $C^{-1}(x) \circ \tau_x^{-1}$. In the last step we'll apply $\tau_x \circ C(x)$ to obtain a graph in the frame $\Ec_0$.
 
 We know that in the coordinates $(X,Y)$,  $\varphi_n$ is defined over $|X|<20\rho^{60}$,  $\Lip ( \varphi_n ) \leq 1/2$, and $|\varphi_n(0)| \leq \rho^{60}/10$.  We first consider $$\overline{\Gamma}_n:= \Crm(\widetilde \Gamma_n), \quad \text{where} \quad \Crm:= C^{-1}(x) \circ \tau_x^{-1} \circ \tau_{f^{-n}(x)} \circ C(f^{-n}(x)),$$
 so that $\Gamma_n = \big( \tau_x \circ C(x) \big) (\overline{\Gamma}_n)$.

By setting $D = f^{-n}(x) - x$ have that $ \tau_x^{-1} \circ \tau_{f^{-n}(x)}(Z) = Z + D$ and we can write
\begin{equation} \label{eq:C-decomposition}
\Crm(X,Y) = g(Z) + C(x)^{-1} D, \quad \text{where} \quad g(Z) = C(x)^{-1} C(f^{-n}(x))(Z).
\end{equation}

We'll analyse each term in the above composition separately.

\smallskip

\noindent \underline{\textit{Claim:}} Let $g$ be as above. The image $g(\widetilde \Gamma_n)$ is a graph of the form $(z,\varphi_1(z))$  over the disc $|z| \leq 10\rho^{60}$ with Lipschitz constant bounded by $3/4$ and with $|\varphi_1(0)| \leq \rho^{60} / 5$.

\smallskip

\noindent \textit{Proof of claim:}  Denote by  $y:=f^{-n}(x)$. We have that $\dist(x,y) < \rho^{70}$ from our assumptions. Then,  by Proposition \ref{prop:angle-holder}, we have that $\measuredangle (\Tan_x \Delta^{\u}(x) , \Tan_y \Delta^{\u}(y)) < \rho^{10}$.  Recall that $u_{-l}$ (resp. \ $v_{-l}$) is tangent to $\Delta^{\s}(f^{-l}(x))$ (resp. \ $\Delta^{\u}(f^{-l}(x))$) for  $l \in  \{0, p, \ldots,\ell p,  n\}$.  In particular,  up to multiplying $u_0$ and $v_0$ by a complex number of modulus one,  we can assume that the angle $\theta$ between $u_0$ and $u_{-n}$ and the angle  between $v_0$ and $v_{-n}$ are both smaller than $\rho^{10}$.  In particular,  since the vectors are of unit norm, we have $\|u_0 - u_{-n}\|^2 = 2 - 2 \cos \theta \leq \theta^2$,  so $\|u_0 - u_{-n}\| \leq \rho^{10}$ and similarly for $v_0$ and $v_{-n}$.

We have that $$g(e_1) = C^{-1}(x) u_{-n} = C^{-1}(x) u_{0} + C^{-1}(x) (u_{-n} - u_0) = e_1 + C^{-1}(x) (u_{-n} - u_0)$$ and similarly
$$g(e_2) = e_2 + C^{-1}(x) (v_{-n} - v_0)$$
and we know from Lemma \ref{lemma:pesin-matrix} that $\| C^{-1}(x)\| \leq 2 \rho^{-6}$.  Together with the above estimates for $\|u_0 - u_{-n}\|$ and $\|v_0 - v_{-n}\|$,  the matrix $M_g$ of $g$ in the canonical basis is of the form $$M_g = \begin{pmatrix} 1 + \varepsilon_1 & \varepsilon_2 \\ \varepsilon_3 & 1+ \varepsilon_4 \end{pmatrix} \quad \text{with} \quad |\varepsilon_i| \leq 2 \rho^4, \,  i=1,2,3,4,$$
so $g(X,Y) = (AX  + R(X,Y), BY + U(X,Y))$ with $A= 1 + \varepsilon_1$,  $R(X,Y) = \varepsilon_3 Y$,   $B= 1 + \varepsilon_4$ and $U(X,Y) = \varepsilon_2 X$.  Applying Theorem \ref{thm:DN},  directly gives all the estimates of the claim.  We leave this simple verification to the reader. \hfill $\square$

\smallskip

From the above decomposition of $\Crm$,  we have that $\overline{\Gamma}_n= \Crm(\widetilde \Gamma_n)$ is the image of $g(\widetilde \Gamma_n)$ under the translation by $C(x)^{-1} D =: W = (W_1,W_2)$.  This translation doesn't change the Lipschitz constant and  $\|W \| \leq \|C(x)^{-1}\| \, \|D\| \leq \rho^{60} / 5$,  where we have used Lemma \ref{lemma:pesin-matrix} and the assumption that $\dist(x,f^{-n}(x)) < \rho^{70}$.  Together with the above claim,  we get that $\overline \Gamma_n$ is a graph $(z,\varphi_2(z))$ over $|z| \leq 2 \rho^{60}$ with $\Lip(\varphi_2) \leq \frac34$.  Moreover,  $\varphi_2(0) = \varphi_1(-W_1) + W_2$,  so $$|\varphi_2(0)| \leq |\varphi_1(-W_1) + W_2| \leq  |\varphi_1(-W_1) - \varphi_1(0)| + |\varphi_1(0)| + |W_2| \leq 3 \cdot \rho^{60} / 5 < \rho^{60},$$ where we have used that $\Lip(\varphi_1) \leq 1$ and the fact that  both $|\varphi_1(0)|$ and $\|W\|$ are bounded by $\rho^{60} / 5$.

We thus proved all the properties of the proposition for $\overline \Gamma_n$ in the frame $\Cc$.  In other words, $\overline{\Gamma}_n$ is parametrized by
$$ X \longmapsto  X e_1 + \Phi(X) e_2 \quad \text{for} \quad |X| \leq 2\rho^{60},$$
where $\Lip(\Phi) \leq 3/4$ and $|\Phi(0)| \leq \rho^{60}$ in the frame $\Cc$. Now,  $\Gamma_n = \big( \tau_x \circ C(x) \big) (\overline{\Gamma}_n)$ and $ \tau_x \circ C(x)$ sends $\Cc$ to $\Ec_0$, so in the frame $\Ec_0$ with coordinates $(z,w)$, the graph $\Gamma_n$ is parametrized by
$$ z \longmapsto x + z u_0 + \Phi(z) v_0 \quad \text{for} \quad |z| \leq 2\rho^{60}.$$
This finishes the proof of the proposition.
 \end{proof}

We can now prove the graph transform theorem.

\begin{proof}[Proof of Theorem \ref{thm:graph-transform}-(1)]
Recall that $x \in   \Bc^* \cap \Sc$ is fixed and the space of graphs $\Gc_\Sc$ is defined in terms of the stable/unstable frame $\Ec_0$ at $x$  (see Definition \ref{def:frames} for the notation),  whose associated coordinates are denoted by $(z,w)$. Fix $\Gamma \in \Gc_\Sc$ and let $\Gamma_n \in \Gc_\Sc$ be the graph obtained in Proposition \ref{prop:graph-transform}. We define $\Lambda: \Gc_\Sc \to \Gc_\Sc$ by  $\Lambda(\Gamma):= \Gamma_n$.

Fix $\Gamma = \{ (z,\varphi(z)) \}$ and  $\Upsilon = \{ (z,\psi(z)) \}$ two graphs in $\Gc_\Sc$. Let  $\widetilde \Gamma$ (resp. $\widetilde{\Upsilon}$) be the image of $\Gamma$ (resp. $\Upsilon$) via  $C^{-1}(x) \circ \tau_x^{-1}$ in the canonical frame $\Cc$ with coordinates $(X,Y)$.  As in the end of the proof of Proposition \ref{prop:graph-transform}, we see that this graphs are given by $\{ (X,\varphi(X)) \}$ and  $\{ (X,\psi(X)) \}$ respectively.

By  definition, the image of $\Gamma$ under $\Lambda$ corresponds, in the frame $\Cc$,  to the image of  $\widetilde \Gamma$ under the  composition $g_n \circ g_{\ell -1} \circ \cdots g_1 \circ g_0$, where the maps $g_j$ are defined in \eqref{eq:def-g_j}. We'll use the notations and results of Theorem \ref{thm:DN} and Lemmas \ref{lemma:g-estimates} and \ref{lemma:g-Gamma}. Let $\widetilde \Gamma_{p} = \{(X,\varphi_{p}(X))\}$   and   $\widetilde{\Upsilon}_{p} = \{(X, \psi_{p}(X))\}$ ) be the images of $\widetilde \Gamma$  and $\widetilde{\Upsilon}$ under $g_0$ respectively. 

We have that
\begin{align*}
&g_0(X,\varphi (X)) = \big( AX + R(X,\varphi (X)), B \varphi (X) + U(X,\varphi (X)) \big) =: (X_{-p},Y_{-p})  \\
&g_0(X,\psi (X)) = \big( AX + R(X,\psi (X)), B \psi (X) + U(X,\psi (X)) \big).
\end{align*}

Notice that $\varphi_{-p}(AX + R(X,\varphi (X)) = B \varphi (X) + U(X,\varphi (X)) $ because $(X_{-p},Y_{-p}) \in \widetilde \Gamma_{-p}$. Analogously $ \psi_{-p}(AX + R(X, \psi (X)) = B \psi (X) + U(X, \psi (X)) $.

Introduce
\begin{alignat*}{2}
a_\varphi &\coloneqq AX+R\bigl(X,\varphi(X)\bigr), &\qquad a_\psi &\coloneqq AX+R\bigl(X,\psi(X)\bigr), \\
b_\varphi &\coloneqq B\varphi(X)-U\bigl(X,\varphi(X)\bigr), &\qquad b_\psi &\coloneqq B\psi(X)-U\bigl(X,\psi(X)\bigr).
\end{alignat*}
Using  Theorem \ref{thm:DN} and Lemmas \ref{lemma:g-estimates} and \ref{lemma:g-Gamma}, we get
\begin{align}
\left|\varphi_{-p}(X_{-p})-\psi_{-p}(X_{-p})\right| &= \left|\varphi_{-p}(a_\varphi)-\psi_{-p}(a_\varphi)\right| \nonumber \\
&\leq \left|\varphi_{-p}(a_\varphi)-\psi_{-p}(a_\psi)\right|+\left|\psi_{-p}(a_\psi)-\psi_{-p}(a_\varphi)\right| \nonumber  \\
&\leq \left|b_\varphi-b_\psi\right|+\left|R\bigl(X,\varphi(X)\bigr)-R\bigl(X,\psi(X)\bigr)\right|  \nonumber  \\
&\leq \left(|B|+\lVert DU\rVert_{L^\infty(\mathcal B(0,R_0))}+\lVert DR\rVert_{L^\infty(\mathcal B(0,R_0))}\right)\left|\varphi(X)-\psi(X)\right| \nonumber  \\
&\leq \left(d^{-\alpha n/4}+4\rho^2\right)\lVert\varphi-\psi\rVert_{L^\infty(\{|X|\leq 2\rho^{60}\})} \nonumber  \\
&\leq  5d^{-\alpha n/4}\, \lVert\varphi-\psi\rVert_{L^\infty(\{|X|\leq 2\rho^{60}\})} \label{eq:first-contraction}\\
&\leq \frac{1}{100}\lVert\varphi-\psi\rVert_{L^\infty(\{|X|\leq 2\rho^{60}\})}, \nonumber 
\end{align}
so $ \|\varphi_{-p} - \psi_{-p}\|_{L^\infty(|X| \leq 20 \rho^{60})} \leq \frac{1}{100} \|\varphi - \psi\|_{L^\infty(|X| \leq 2\rho^{60})}$. In \eqref{eq:first-contraction} we have used that $\rho = d^{-\alpha n \kappa}$ with $\kappa > 1$.

 Let $\widetilde \Gamma_n = \{(X,\varphi_{-n}(X))\}$   and   $\widetilde{\Upsilon}_n = \{(X, \psi_{-n}(X))\}$ ) be the images of $\widetilde \Gamma$  and $\widetilde{\Upsilon}$  under $g_n \circ g_{\ell -1} \circ \cdots g_1 \circ g_0$ respectively. Repeating the above arguments for each factor in the above decomposition yields 
 \begin{equation} \label{eq:graph-contraction}
 \|\varphi_{-n} - \psi_{-n}\|_{L^\infty(|X| \leq 20 \rho^{60})} \leq \Big( \frac{1}{100}\Big)^{\ell +1} \|\varphi - \psi\|_{L^\infty(|X| \leq 2 \rho^{60})}.
 \end{equation}
 
 We now move back to the initial frame $\Ec_0$. By construction $\Gamma = \{ (z,\varphi(z)) \}$ and  $\Upsilon = \{ (z,\psi(z)) \}$ are the images of  $\widetilde \Gamma$ and $\widetilde{\Upsilon}$ under   $\tau_x \circ  C(x)$ and  by the definition of the graph transform $\Gamma_n =  \Lambda(\Gamma)$ is the image of $\widetilde \Gamma_n$ under $\tau_{f^{-n}(x)} \circ C(f^{-n}(x))$, and similarly for $\Upsilon_n$.  Moreover $\overline{\Gamma}_n = \Crm(\widetilde \Gamma_n) $ and $\Gamma_n = \big( \tau_x \circ C(x) \big) (\overline{\Gamma}_n)$, where $\Crm = C^{-1}(x) \circ \tau_x^{-1} \circ \tau_{f^{-n}(x)} \circ C(f^{-n}(x))$ and similarly for $\Upsilon_n$, see the proof of Proposition \ref{prop:graph-transform}.
 
 From the proof of Proposition \ref{prop:graph-transform}, in the frame $\Cc$, the graph $\overline{\Gamma}_n$ is parametrized by $ X \mapsto  X e_1 + \overline\varphi_{-n} (X) e_2$  for $|X| \leq 2 \rho^{60}$,  with $\Lip(\overline\varphi_{-n} ) \leq 3/4$ and $|\overline\varphi_{-n} (0)| \leq \rho^{60}$, and similarly for $\overline{\Upsilon}_n$ with another $\overline\psi_{-n}$ instead of $\overline\varphi_{-n}$ with the same properties.
 
 \smallskip

\noindent \underline{\textit{Claim:}} In the above notation we have   $$ \|\overline\varphi_{-n} - \overline\psi_{-n}\|_{L^\infty(|X| \leq 2 \rho^{60})} \leq 2 \|\varphi_{-n}- \psi_{-n}\|_{L^\infty(|X| \leq 10\rho^{60})}.$$

\smallskip

\textit{Proof of claim:} We will use the notation and results from the proof of Proposition \ref{prop:graph-transform}. In particular,  will use the decomposition \eqref{eq:C-decomposition} and the fact that $W = C(x)^{-1} D   = (W_1,W_2)$ is such that  $\|W \| \leq  \rho^{60} / 5$.

We have that $$\Crm(X,\varphi_{-n}(X)) = \big( (1+\varepsilon_1)X + \varepsilon_3 \varphi_{-n}(X) + W_1, (1+\varepsilon_4)\varphi_{-n}(X) + \varepsilon_2 X + W_2 \big) =: (\overline X_n, \overline Y_n)$$
belongs to $\overline{\Gamma}_n = \{(X, \overline\varphi_{-n}(X))\}$ when $|X| \leq 20\rho^{60}$, that is, $\overline \varphi_{-n}(\overline X_n) =   \overline Y_n$, and similarly for  $\Crm(X,\psi_{-n}(X)) $.

By introducing,
\begin{alignat*}{2}
c_\varphi &\coloneqq (1+\varepsilon_1)X + \varepsilon_3\varphi_{-n}(X) + W_1, &\qquad c_\psi &\coloneqq (1+\varepsilon_1)X + \varepsilon_3\psi_{-n}(X) + W_1, \\
d_\varphi &\coloneqq (1+\varepsilon_4)\overline{\varphi}_{-n}(X) + \varepsilon_2 X + W_2, &\qquad d_\psi &\coloneqq (1+\varepsilon_4)\overline{\psi}_{-n}(X) + \varepsilon_2 X + W_2.
\end{alignat*}
we get
\begin{align*}
\left|\overline{\varphi}_{-n}(\overline{X}_n)-\overline{\psi}_{-n}(\overline{X}_n)\right| &= \left|\overline{\varphi}_{-n}(c_\varphi)-\overline{\psi}_{-n}(c_\varphi)\right| \\
&\leq \left|\overline{\varphi}_{-n}(c_\varphi)-\overline{\psi}_{-n}(c_\psi)\right| + \left|\overline{\psi}_{-n}(c_\psi)-\overline{\psi}_{-n}(c_\varphi)\right| \\
&\leq \left|d_\varphi-d_\psi\right| + |\varepsilon_3|\left|\psi_{-n}(X)-\varphi_{-n}(X)\right| \\
&\leq \bigl(1+|\varepsilon_3+\varepsilon_4|\bigr)\left|\psi_{-n}(X)-\varphi_{-n}(X)\right| \\
&\leq 2\left|\psi_{-n}(X)-\varphi_{-n}(X)\right|.
\end{align*}
thus proving the claim. \hfill $\square$

Combining the above claim with \eqref{eq:graph-contraction} yields $$\|\overline\varphi_{-n} - \overline\psi_{-n}\|_{L^\infty(|X| \leq  \rho^{40})} \leq 2 \Big(\frac{1}{100}\Big)^{\ell +1} \|\varphi - \psi\|_{L^\infty(|X| \leq \rho^{40})} =  2 \Big(\frac{1}{100}\Big)^{\ell +1} d(\Gamma,\Upsilon).$$

Observe that the left hand side above is the distance between $\overline{\Gamma}_n$ and $\overline{\Upsilon}_n$.  Moreover $\Gamma_n = \big( \tau_x \circ C(x) \big) (\overline{\Gamma}_n)$,  $\Upsilon_n = \big( \tau_x \circ C(x) \big) (\overline{\Upsilon}_n)$ and the change of coordinates preserve the graph distance in the corresponsing frames. We conclude that
$$ d(\Lambda(\Gamma),\Lambda(\Upsilon)) = d(\Gamma_n,\Upsilon_n) \leq   2 \Big(\frac{1}{100}\Big)^{\ell +1} d(\Gamma,\Upsilon).$$

This proves the first assertion of Theorem \ref{thm:graph-transform}-(1). By the Banach fixed-point theorem, the map $\Lambda$ admits a fixed point $\Gamma(x)$, since $\Gc_\Sc$ is compact and therefore complete. This completes the proof of Theorem \ref{thm:graph-transform}-(1).

\end{proof}

We now move on the proof of Theorem \ref{thm:graph-transform}-(2). Let $\Gamma(x)$ be the fixed point of $\Lambda$ as above.  The fact that $\Gamma(x)$ is stable under $f^n$ is clear from our construction.  Indeed,  from the definition of the $g_j$'s (see \eqref{eq:def-g_j} and the diagram after Definition \ref{def:frames})  we have
\begin{equation} \label{eq:f^-n-decomposition}
 f^{-n} = \tau_{f^{-n}(x)} \circ C(f^{-n}(x)) \circ g_n \circ g_{\ell -1} \circ \cdots g_1 \circ g_0 \circ C(x)^{-1} \tau_x^{-1}
\end{equation}
and $\Lambda$ is defined as the action of the  map  on the right hand side on the graphs of $\Gc_\Sc$. 
As above,  let  $\widetilde \Gamma_0:= \widetilde \Gamma  = \big(C(x)^{-1} \circ \tau_x^{-1} \big) (\Gamma)$,  $\widetilde \Gamma_p = g_0(\widetilde \Gamma)$,  $\widetilde \Gamma_{2p} = g_1(\widetilde \Gamma_p)$ and so on,  up to $\widetilde \Gamma_{n} = g_n(\widetilde \Gamma_{\ell p})$.

\begin{lemma}
Let $j=0,1,\ldots,\ell-1$.  Let $Z \in \widetilde \Gamma_{jp}$ and $w_j$ be a unit vector in $\Tan_Z \Gamma_{jp}$.  Then $\|D g_j(Z) \cdot w_j\| \geq d ^{\alpha n / 8}$.  A similar estimate holds for $Z \in \widetilde \Gamma_{\ell p}$ under the action of $Dg_n$.
\end{lemma}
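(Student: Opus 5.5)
The plan is to split $Dg_j(Z)$ into its linear part and the nonlinear error, using Lemma \ref{lemma:g-estimates}. We write $g_j(W) = Dg_j(0)\cdot W + h(W)$ with $Dg_j(0) = \mathrm{diag}(A,B)$, $|A| \geq d^{\alpha n/4}$, $|B| \leq d^{-\alpha n/4}$ and $\|Dh(W)\| \leq \|W\|/\rho^{33}$ for $\|W\| \leq \rho^{14}$.

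\textbf{Step 1: localization.} The graphs $\widetilde\Gamma_{jp}$ we consider are the restrictions to $|X| \leq 2\rho^{60}$ of the graphs produced by Lemma \ref{lemma:g-Gamma} (or, for $j=0$, the graph $\widetilde\Gamma$ itself). They satisfy $\Lip \leq 3/4$ and $|\varphi(0)| \leq \rho^{60}$. Hence every point $Z=(X,\varphi_{-jp}(X))$ satisfies $\|Z\| \leq |X| + |\varphi(0)| + \tfrac34 |X| \leq 5\rho^{60} \leq \rho^{14}$. Therefore $\|Dh(Z)\| \leq 5\rho^{60}/\rho^{33} = 5\rho^{27} \leq \rho^{2}$, consistent with the bound $\delta=\rho^2$ used in the proof of Lemma \ref{lemma:g-Gamma}.

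\textbf{Step 2: lower bound on the tangent vector.} A unit tangent vector to the graph at $Z$ has the form $w_j = c\,(1,\varphi'(X))$ with $|\varphi'(X)| \leq 3/4$ (Lipschitz constant, holomorphic so the derivative is bounded by it). Normalization gives $|c| \geq (1+9/16)^{-1/2} = 4/5$, so the first coordinate of $w_j$ has modulus at least $4/5$. Then
$$\|Dg_j(Z)\cdot w_j\| \geq \|Dg_j(0)\cdot w_j\| - \|Dh(Z)\| \geq |A|\cdot\tfrac45 - \rho^{2} \geq \tfrac45 d^{\alpha n/4} - 1 \geq d^{\alpha n/8},$$
since $n$ is large. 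The same argument applies verbatim to $g_n$ acting on $\widetilde\Gamma_{\ell p}$, because Lemma \ref{lemma:g-estimates} covers $g_n$ as well.

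\textbf{Main obstacle.} The delicate point is Step 1: ensuring the whole graph sits inside the ball $\|W\| \leq \rho^{14}$ where Lemma \ref{lemma:g-estimates}(3) applies. The graphs are uniformly controlled at every stage only because of the restriction to $|X|\leq 2\rho^{60}$ and the bounds $\Lip\le 1/2$ and $|\varphi_{-jp}(0)|\le\rho^{60}/10$ from Lemma \ref{lemma:g-Gamma}, which we need to carry inductively along the composition. Once localization holds, the exponential gap between $\rho^{27}$ and $d^{\alpha n/4}$ makes the estimate immediate.
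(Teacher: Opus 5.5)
Your proposal is correct and follows essentially the same route as the paper's proof. You localize $Z$ near the origin so that Lemma \ref{lemma:g-estimates} gives $\|Dg_j(Z)-Dg_j(0)\|\le\rho^2$, write $w_j$ as the normalization of $(1,\varphi'(X))$ with $|\varphi'|$ bounded by the Lipschitz constant, and use $|A|\ge d^{\alpha n/4}$. Your bookkeeping is slightly tidier than the paper's: you use the diagonal form of $Dg_j(0)$ to get $\|Dg_j(0)w_j\|\ge \tfrac45|A|$ directly, keep the $-\rho^2$ error term explicit, and use the correct Lipschitz bound $3/4$ when $j=0$.
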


\begin{proof} \label{lemma:Dg-tangent}
We only prove the lemma for $j=0$,  the other cases being identical.  Recall that $\widetilde \Gamma_0= \widetilde \Gamma$ is contained in  the disk centered at the origin and radius $\rho^{35}$ (see the proof of Lemma \ref{lemma:g-Gamma}).  In particular $\|Z\| \leq \rho^{35}$.  Using Lemma \ref{lemma:g-estimates} we get that $$\|D g_0(Z) \cdot w_0\| \geq  \|D g_0(0) \cdot w_0\| - \|D g_0(0) \cdot w_0 - D g_0(Z) \cdot w_0\|  \geq \|D g_0(0) \cdot w_0\| - \rho^2.$$

Recall that $\widetilde \Gamma$ is a graph of the form $ \{(X,\varphi (X))\}$ over the disc $\{|X| < 10 \rho^{40}\}$ and $\Lip ( \varphi ) \leq 1 / 2$.  Hence, the unit vector $w_0 \in \Tan_Z \Gamma$ is of the form $w_0 = \frac{(1,\varphi'(X_0))}{\|(1,\varphi'(X_0))\|} $ for some $X_0$ in the above disc.  As $|\varphi'(X_0)| \leq 1/2$ from the above properties of $\varphi$ we see  that  $\|(1,\varphi'(X_0))\| \leq 2$. 
Together with Lemma \ref{lemma:g-estimates} and the above inequality,  we conclude that
\begin{align*}
\|D g_0(Z) \cdot w_0\| & \geq \frac12 \|D g_0(0) \cdot (1,0) + D g_0(0) \cdot (0,\varphi'(X_0))\| \\
& \geq \frac12  \|D g_0(0) \cdot e_1\| - \frac12 |\varphi'(X_0)| \frac12 \|D g_0(0) \cdot (0,1)\| \\
&\geq \frac12 d^{\alpha n /4} -\frac14 d^{-\alpha n /4} \geq d ^{\alpha n / 8},
\end{align*}
thus proving the lemma.
\end{proof}

\begin{proof}[Proof of Theorem \ref{thm:graph-transform}-(2)]
We have already seen that $\Gamma(x)$ is preserved by $f^n$.  We now show that $f^{n}: \Gamma(x) \to \Gamma(x)$ is a contraction. Fix $y \in \Gamma(x)$ and $u \in \Tan_y \Gamma(x)$ of unit norm.  We want to  show that $\|Df^n(y) \cdot u\| \leq d^{-n/100}$.

 Taking the inverse of \eqref{eq:f^-n-decomposition} gives
$$f^{n}
= \tau_x \circ C(x) \circ g_0^{-1} \circ g_1^{-1} \circ \cdots \circ g_{\ell-1}^{-1} \circ g_n^{-1} \circ C\!\left(f^{-n}(x)\right)^{-1} \circ \tau_{f^{-n}(x)}^{-1}.$$
From the chain rule and the fact that  $C(\cdot)$ is linear and $D \tau_\bullet$ is the identity, we get that
$$Df^n(y) \cdot u = C(x) D\big(g_0^{-1} \circ g_1^{-1} \circ \cdots \circ g_{\ell-1}^{-1} \circ g_n^{-1} \big)(y_n)\cdot \widetilde w_n,$$
where $y_n:=C\!\left(f^{-n}(x)\right)^{-1} \circ \tau_{f^{-n}(x)}^{-1}(y)$ and $\widetilde w_n:=C\!\left(f^{-n}(x)\right)^{-1} \cdot u$. Observe that $y_n \in \widetilde \Gamma_n$ and $w_n$ is tangent to $\widetilde \Gamma_n$.
Set $$y_j = g_j^{-1} \circ \cdots \circ g_{\ell-1}^{-1} \circ g_n^{-1}(y_n) \quad \text{for} \quad  j=0,\ldots,\ell - 2 \quad  \text{and} \quad  y_{\ell-1} :=g_n^{-1}(y_n),$$
so that 
$$Df^n(y) \cdot u = C(x) Dg_0^{-1}(y_0) Dg_1^{-1}(y_1) \cdots  Dg_{\ell-1}^{-1}(y_{\ell-1}) D g_n^{-1}(y_n)\cdot \widetilde w_n.$$

Now let $$w_j= \frac{Dg_j^{-1}(y_j) \cdots  Dg_{\ell-1}^{-1}(y_{\ell-1}) D g_n^{-1}(y_n)\cdot \widetilde w_n}{\|Dg_j^{-1}(y_j) \cdots  Dg_{\ell-1}^{-1}(y_{\ell-1}) D g_n^{-1}(y_n)\cdot \widetilde w_n\|} \quad \text{for} \quad  j=0,\ldots,\ell - 1 \quad  \text{and} \quad w_n = \frac{\widetilde w_n}{\|\widetilde w_n\|}.$$

Observe that $y_j \in \widetilde \Gamma_{jp}$ and $w_j \in \Tan_{y_j} \widetilde \Gamma_{jp}$ for all $j=0,\ldots,\ell - 1$.
Then,
\begin{align} \label{eq:Df^n-chain}
\|Df^n(y) \cdot u \ | &\leq \|C(x)\| \|Dg_0^{-1}(y_0) \cdot w_0\| \, \|  Dg_1^{-1}(y_1) \cdots  Dg_{\ell-1}^{-1}(y_{\ell-1}) D g_n^{-1}(y_n)\cdot \widetilde w_n \| \nonumber \\
\cdots \quad & \leq  \|C(x)\| \|Dg_0^{-1}(y_0) \cdot w_0 \| \,  \|Dg_0^{-1}(y_1) \cdot w_1 \| \, \cdots  \,  \|Dg_n^{-1}(y_n) \cdot w_n \| \, \|\widetilde w_n\|
\end{align}

Using Lemma \ref{lemma:Dg-tangent},  we obtain
\begin{align*}
1 &= \|D(g_j \circ g_j^{-1}) (y_j) \cdot w_j)\| = \|Dg_j(g_j^{-1}(y_j)) Dg_j^{-1}(y_j) \cdot w_j\| \\
&=  \Big\|Dg_j(g_j^{-1}(y_j)) \cdot \frac{Dg_j^{-1}(y_j) \cdot w_j}{\|Dg_j^{-1}(y_j) \cdot w_j\|} \Big\| \, \| Dg_j^{-1}(y_j) \cdot w_j \| \\
& \geq d^{\alpha n / 8} \| Dg_j^{-1}(y_j) \cdot w_j \|,
\end{align*} 
so $\| Dg_j^{-1}(y_j) \cdot w_j \| \leq d^{-\alpha n / 8}$,  which combined with \eqref{eq:Df^n-chain} and Lemma \ref{lemma:pesin-matrix} yields
$$\|Df^n(y) \cdot u \ |  \leq (2 \rho^{-6}) (d^{-\alpha n / 8})^{\ell + 1} (2 \rho^{-6}).$$

Now recall that $p=\ceil{6\alpha n}$ and 
$n=(\ell+1)p+s$.  It follows readily from these relations that, for $\alpha>0$ sufficiently small and $n$ sufficiently large one has $
\alpha\ell\geq \frac18$.  Recall that $\rho = d^{-\alpha \kappa}$, so  that
$$\|Df^n(y) \cdot u \ |  \leq 4 d^{12 \alpha  \kappa n} d^{- n / 64} \leq d^{-n/100},$$
because $\alpha $ is small and $n$ is large.  This proves the desired estimate.

The above estimate implies that $f^n: \Gamma(x) \to \Gamma(x)$ is a contraction with respect to the euclidean metric and the existence of a (contracting) periodic point of order  $n$ for $f$ in  $\Gamma(x)$ follows from Banach fixed point theorem.
\end{proof}

\textbf{The reverse graph transform and the unstable multiplier.} The above graph transform can be applied in the reverse direction and,  as a consequence,  it implies that the periodic point obtained above is of saddle type.

\begin{corollary} \label{cor:graph-transform}
 Fix $x \in   \Bc^* \cap \Sc$ and assume that $\dist(x,f^{-n}(x)) < \rho^{70}$ as in Theorem \ref{thm:graph-transform}.  Denote by $q \in  \Bc^* \cap \Sc$ the  fixed point of $f^n$ given by that theorem.  Then for any unit vector $v$ tangent to the plaque $\Delta^{\u}(q)$ of $ \T^-_{r}$ through $q$,  we have that $$\|Df^n(q) v\| \geq d^{n/100}.$$
 In particular,  $q$ is a saddle periodic point of $f$ of period $n$.
\end{corollary}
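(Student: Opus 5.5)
The plan is to run the argument of Theorem~\ref{thm:graph-transform} in reverse, with the roles of $f$ and $f^{-1}$ and of $\T^+_{r,\bullet}$ and $\T^-_{r,\bullet}$ exchanged, and then compare the resulting unstable graph with the stable one at $q$. In the same local Pesin charts, the maps $g_j$ and $g_n$ have inverses that, by Lemma~\ref{lemma:g-estimates}, are $g^{-1}(w) = Dg(0)^{-1} w + \widetilde h(w)$ with $Dg(0)^{-1} = \mathrm{diag}(A^{-1},B^{-1})$, where $|B^{-1}| \geq d^{\alpha n/4}$ and $|A^{-1}|\leq d^{-\alpha n/4}$. The nonlinear part $\widetilde h$ obeys the same kind of bound $\|D\widetilde h(w)\|\leq \rho^{-C}\|w\|$ on a ball of radius comparable to $\rho^{14}$. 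This follows from the inverse function theorem and the bound on $\|D^2 g\|$.

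First I would define the space $\Gc^{\u}_\Sc$ of unstable-like graphs. These are graphs $\{(\psi(w),w)\}$ over $|w|\leq 2\rho^{60}$, in the frame $\Ec_0$ at $x$, with $\Lip(\psi)\leq 3/4$ and $|\psi(0)|\leq\rho^{60}$. The reverse transform is induced by $f^n$, written as the composition $g_0^{-1}\circ\cdots\circ g_{\ell-1}^{-1}\circ g_n^{-1}$ conjugated by the frame changes. Applying Theorem~\ref{thm:DN} with the coordinates swapped gives the analogues of Lemma~\ref{lemma:g-Gamma} and Proposition~\ref{prop:graph-transform}; the closeness $\dist(x,f^{-n}(x))<\rho^{70}$ and Proposition~\ref{prop:angle-holder} handle the change of frame from $\Ec_{-n}$ back to $\Ec_0$. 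The contraction argument then yields a unique invariant graph $\Gamma^{\u}(x)$. The analogue of Lemma~\ref{lemma:Dg-tangent} and the computation in the proof of Theorem~\ref{thm:graph-transform}-(2) show that $f^{-n}$ contracts tangent vectors of $\Gamma^{\u}(x)$ by $d^{-n/100}$, so $f^n$ expands them by at least $d^{n/100}$.

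Next I identify $q$ with $\Gamma(x)\cap\Gamma^{\u}(x)$. Two graphs of Lipschitz constant at most $3/4$ over transverse axes meet in exactly one point of the bidisc. Both graphs are invariant, so this point is fixed by $f^n$; on the stable graph the fixed point is unique by the contraction, hence it equals $q$. At $q$, $Df^n$ preserves both tangent lines, contracting one by $d^{-n/100}$ and expanding the other by $d^{n/100}$. So $q$ is a saddle point, and since $f^k(q)\neq q$ for proper divisors $k$ cannot be ruled out this way, ``period $n$'' is meant as $f^n(q)=q$.

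The main obstacle is passing from $\Tan_q\Gamma^{\u}(x)$ to $\Tan_q\Delta^{\u}(q)$ for the plaque of $\T^-_r$. I would first show, as in Lemma~\ref{lemma:plaques-in-G} (with the compatibility (P4) and the fact that $q\in\Bc^*$), that $\Delta^{\u}(q)$, restricted to the box, lies in some element of $\Gc^{\u}_\Sc$. Its iterate under the reverse transform stays a plaque through $q$, because $f^{-n}(q)=q$ and (P4) gives compatibility along the orbit. By uniqueness of the fixed point, the restriction of $\Delta^{\u}(q)$ then coincides with $\Gamma^{\u}(x)$. If this identification proves delicate, I would fall back on a cone argument: vectors in the unstable cone of slope at most $3/4$ are expanded by $\prod \|Dg_j^{-1}\cdot\|\geq d^{\alpha n(\ell+1)/8}$, up to the factors $\|C^{\pm1}\|\leq 2\rho^{-6}$. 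This gives $d^{n/100}$ directly for any unit $v\in\Tan_q\Delta^{\u}(q)$, provided that tangent line is within the cone, which follows from (P8) and Proposition~\ref{prop:angle-holder}.
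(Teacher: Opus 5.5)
Your proposal takes the same route as the paper, which only remarks that the graph transform of Theorem~\ref{thm:graph-transform} can be run in the reverse direction. Your elaboration (the reverse transform via the $g_j^{-1}$, the identification $q=\Gamma(x)\cap\Gamma^{\u}(x)$, and cone expansion) is a reasonable fleshing-out of that remark. One caveat: (P4) only sends the plaque of $T^-_{r,-n}$ through $f^{-n}(q)=q$ onto the plaque of $T^-_{r,0}$ through $q$, so it does not by itself make $\Delta^{\u}(q)$ a fixed point of your reverse transform. For that you would also need the two plaques to coincide near $q$, e.g.\ via a non-crossing argument for laminar currents dominated by $T^-$, so it is your cone-argument fallback that actually gives $\|Df^n(q)v\|\geq d^{n/100}$ for $v$ tangent to $\Delta^{\u}(q)$.
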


\section{Periodic points}

In this section we give the main application of the results obtained above: a quantitative equidistribution result for periodic points.  This quantifies the main theorem in \cite{bedford-lyubich-smillie}.

\begin{theorem} \label{thm:periodic-points}
Let $f: \C^2 \to \C^2$ be a Hénon map of degree $d \geq 2$.    Denote by $\mu$ its measure of maximal entropy and by ${\rm SPer_n}$ the set of saddle periodic points of order $n$.  Then, for every  $0<\beta \leq 1$,  there exist a constant  $0<\xi<1$ independent of $\beta$ and another constant $A_\beta>0$ such that the following holds.
$$\Big| \Big\langle\frac{1}{d^{n}}\sum_{a\in {\rm SPer_n}} \delta_a -\mu, \phi \Big\rangle \Big| \leq A_\beta \xi^{\alpha n} \|\phi\|_{\Cc^\beta},$$
for any $\Cc^\beta$ test function $\phi$ on $\C^2$, where $\delta_a$ denotes the Dirac mass at $a$.
\end{theorem}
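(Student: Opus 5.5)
The plan is to establish a two-sided estimate: a quantitative lower bound, showing that the measure $\nu_n := d^{-n}\sum_{a\in {\rm SPer}_n}\delta_a$ dominates, up to exponentially small error, a measure close to $\mu$ at scale $\rho^{60}$; and an upper bound on the total mass of $\nu_n$, which comes from the Bezout-type count $|{\rm Per}_n| \le d^n + O(1)$. Since $\nu_n$ then has mass at most $1+O(d^{-n})$, a lower bound of the form $\nu_n(\Sc)\geq \mu(\Sc) - (\text{error})$ for every stable box $\Sc$, with errors summing to $O(r^{\gamma})$ or similar, forces $\nu_n$ and $\mu$ to agree at scale $\rho^{60}$ up to an exponentially small total variation. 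Testing against a $\Cc^\beta$ function $\phi$ then costs $\|\phi\|_{\Cc^\beta}(\diam \Sc)^\beta$ from the oscillation of $\phi$ inside boxes, plus $\|\phi\|_\infty$ times the mass error. Since $\rho^{60\beta}=d^{-60\alpha\kappa\beta n}$ and $r^\gamma=d^{-\alpha\gamma n}$, this yields the bound $A_\beta\xi^{\alpha n}$ with $\xi = d^{-\min(\gamma,\dots)}$; the $\beta$-dependence of the oscillation term can be absorbed in $A_\beta$ after adjusting the exponent. To obtain $\xi$ literally independent of $\beta$, one interpolates with the $\Cc^1$ case.

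The key step is the lower bound per box, which is a counting argument in the spirit of Bedford--Lyubich--Smillie made quantitative. Fix a stable box $\Sc$ avoiding the corridors $\Cc$, which lose only $Cr^\gamma$ of $\mu$ by Lemma \ref{lemma:dujardin-fubini2}. I would consider $\mu_r(\Sc\cap f^{n}(\Sc)\cap \Bc^*)$, where $\mu_r = \T^+_r\dot\wedge\T^-_r$ is the geometric intersection with $\|\mu-\mu_r\|\le Cr^\gamma$. Exponential mixing of $\mu$ for Hénon maps (known for Hölder observables, e.g.\ via Dinh's work) gives $\mu(\Sc\cap f^{-n}\Sc)\approx \mu(\Sc)^2$, up to an error of the form $d^{-cn}$ divided by a power of $\rho$, after smoothing the box indicator at scale $\rho^{100}r^{\gamma}$ using the corridors. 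Choosing $\alpha$ small makes this error exponentially small. For each point $x\in\Bc^*\cap\Sc$ with $f^{-n}(x)\in \Sc$, we want the closing condition $\dist(x,f^{-n}x)<\rho^{70}$. I would refine the partition so that boxes have diameter $<\rho^{70}$; alternatively, I would use that the fixed graph construction of Theorem \ref{thm:graph-transform} only needs $f^{-n}(x)$ to land in the same stable box, which I would attempt to check by redoing Proposition \ref{prop:graph-transform} with the box size in place of $\rho^{70}$. By Corollary \ref{cor:graph-transform}, each such return produces a saddle point $q\in\Sc$ of period $n$.

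The counting step is the main obstacle: we must show that the number of distinct saddle points in $\Sc$ is at least $d^n\mu(\Sc)^2/\mu(\Sc)$ up to errors, rather than merely one point per box. Following BLS, I would count intersections of the plaques. The unstable plaques of $\T^-_{r}$ in $\Sc$, pushed by $f^n$, and the stable plaques carry transverse measures $\nu^\mp$. Each pair consisting of a stable plaque and an $f^n$-image of an unstable plaque, both crossing $\Sc$ properly in the Pesin frame, gives exactly one fixed point of $f^n$, by the contraction in Theorem \ref{thm:graph-transform} applied in both directions. The mass of such pairs is computed via $d^{-n}(f^n)_*$ of the approximating curves $d^{-m}[f^m L]$, which gives weight $d^{n}\,\mu_r(\Sc\cap f^n\Sc)$ up to error. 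Distinct pairs of components yield distinct fixed points, by uniqueness in Corollary \ref{preparing}-type transversality (the Corollary after (P8)). Summing over boxes gives $\nu_n(\Sc)\ge \mu(\Sc)-\epsilon_\Sc$ with $\sum_\Sc\epsilon_\Sc\lesssim r^\gamma + d^{-cn}\rho^{-C}$.

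The delicate points I expect are, first, controlling the losses from plaques that do not fully cross, which are handled via Lemma \ref{lemma:dujardin-fubini2}. Second, the mixing error must beat the number of boxes $\sim\rho^{-400}$; this is where $\alpha$ must be chosen small relative to the mixing rate.
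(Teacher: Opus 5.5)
Your outer architecture matches the paper's: a lower bound box by box, the upper bound $\#{\rm SPer}_n\le d^n$ from counting fixed points of $f^n$, then testing $\phi$ through its oscillation on boxes. The gap is in the per-box lower bound you build from the single return set $\Sc\cap f^{-n}(\Sc)$.

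In a BLS-type count, each good component carries $\widehat\mu_r$-mass equal to $d^{-n}$ times a product of transverse masses, and that product is at most $\mu(\Sc)$ (compare Lemma \ref{lemma:bls-mixed}). So the best you can get is $N_\Sc\geq d^n\big(\mu(\Sc)^2-e_\Sc-m_\Sc\big)/\mu(\Sc)$. Here $e_\Sc$ is the mixing error, and $m_\Sc$ is the part of $\mu(\Sc\cap f^{-n}(\Sc))$ not accounted for by good components: points outside $\Bc^*$ or in corridors, and the defect $\mu-\widehat\mu_r$.

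The only control on $m_\Sc$ is global, $\sum_\Sc m_\Sc\lesssim r^\gamma$ (Lemmas \ref{lemma:dujardin-fubini2} and \ref{lemma:mu-r-hat-mass}, Property (P11)). The exceptional sets are not Hölder regular, so exponential mixing cannot be applied to them to make the loss proportional to $\mu(\Sc)^2$. In Bedford--Lyubich--Smillie, this is exactly where mixing is applied to a Pesin box held fixed as $n\to\infty$, which gives no rate.

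Meanwhile the main terms add up to $\sum_\Sc\mu(\Sc)^2\le\max_\Sc\mu(\Sc)$. Since $G^\pm$ are Hölder and the boxes have diameter $\lesssim\rho^{50}$, this is a positive power of $\rho$, far below $r^\gamma$. So a set of measure $Cr^\gamma$ can contain every return. Your $\epsilon_\Sc\approx(e_\Sc+m_\Sc)/\mu(\Sc)$ therefore has no controlled sum, and the claimed bound $\sum_\Sc\epsilon_\Sc\lesssim r^\gamma+d^{-cn}\rho^{-C}$ does not follow.

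The missing idea is the paper's splitting $f^n=f^{n-n'}\circ f^{n'}$, with $n'=\ceil{\ell p/2}\approx n/2$ and an intermediate box $\Sc'$. For fixed $n'$, the sets $f^{-n'}(\Sc)\cap\Sc'$ over all pairs $(\Sc,\Sc')$ partition a set of full $\mu$-measure. Their masses therefore add up to $1$, and global $O(r^\gamma)$ losses become a small proportion of the main term.

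The paper then proceeds as follows.
\begin{itemize}
\item It counts good components in each leg, $b_{\Sc,\Sc'}(n')$ and $b_{\Sc',\Sc}(n-n')$ (Proposition \ref{prop:mixing-upper}).
\item It compares $\mu(f^{-n'}(\Sc)\cap\Sc')$ with $\mu(\Sc)\mu(\Sc')$ by mixing (Lemma \ref{lemma:mixing-lower}).
\item It discards couples whose loss exceeds $r^{\gamma/2}$ times their mass. Because $\sum_{\Sc,\Sc'}\mu(\Sc)\mu(\Sc')=1$, the discarded couples have $\mu\otimes\mu$-mass $O(r^{\gamma/2})$, so the boxes that are not safe carry only $O(r^{\gamma/4})$ of $\mu$ (Proposition \ref{prop:unsafe}).
\item Each pair formed by a good component from $\Sc$ to $\Sc'$ and one from $\Sc'$ back to $\Sc$ produces, via the graph transform, a saddle point of period $n$ in $\Sc$.
\item Summing $b_{\Sc,\Sc'}(n')\,b_{\Sc',\Sc}(n-n')\gtrsim d^n\mu(\Sc)\mu(\Sc')$ over good $\Sc'$ gives $\#{\rm SPer}_n(\Sc)\geq(1-r^{\gamma/4})d^n\mu(\Sc)$ for safe $\Sc$ (Propositions \ref{prop:good-couple} and \ref{prop:safe}).
\end{itemize}
Without this two-step device, or some other way to make the Pesin losses relative to $\mu(\Sc)^2$, your argument only recovers the qualitative equidistribution.
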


For the proof,  we will use the quantitative graph transform introduced in the previous section in order to produce a periodic point inside each stable box.  Once this step is achieved,  the proof of the above theorem follows that of the main theorem in \cite{ddk:periodic-Pk} (see Section 6 of that paper).  We keep the notations from the previous section, in particular that of Definition \ref{def:stable-box}.

Fix  $n \geq 1$ as in the previous sections.  As in \cite{ddk:periodic-Pk},  we will use a splitting of the $n^{\text{th}}$ iterate of $f$:
$$f^n = f^{n - n'} \circ f^{n'},  \quad \text{where} \quad n':= \ceil{\ell p/2}$$
and analogously for $f^{-n}$.  Recall that $ p:= \ceil{6 \alpha n}$. This splitting improves the estimates stemming from the exponential mixing property of $f$.  Note that $n'$ is roughly $n / 2$.

We'll need the following standard notion adapted to our context. Observe that every stable box $\Sc$ is equipped with a natural projection $\pi_{\Sc}: \Sc \to Q_{\rho^{60}}$ onto the first coordinate. A subset $E  \subset \Sc$ is said to be \emph{vertical in $\Sc$} if $\pi_{\Sc}(E)$ is compactly contained in $Q_{\rho^{60}}$.

The construction from the previous sections gives uniformly laminar currents $\T^-_{r,\bullet}$ such that, for every stable box $\Sc$ we have
$$\T^-_{r,\bullet}|_{\Sc} = \int_{\Gc_\Sc} [\Delta] d \nu^-_{r,\bullet}(\Delta),$$
for some  positive measure in $\Gc_\Sc$.  In this section we'll only work with $\T^-_{r} = \T^-_{r,0}$,  $\T^-_{r,n'}$ and a refinement of them that we now introduce.
 
 Denote by $\Lc^-_{r}$ the underlying lamination of $\T^-_{r}$, that is, the analytic continuation of the above $\Delta \in \Gc_\Sc$ along the union of all stable boxes $\Sc$.  It is a lamination of the whole vertical strip $\mathcal T$ containing $\Sc$. Similarly, we let  $\Lc^-_{r,n'}$  be the lamination associated with $\T^-_{r,n'}$

Let $$\Gc_{r,\rm{vert}} := \{\Delta \text{ tangent to } \Lc^-_{r} : \Delta \cap \Sc \text{ is vertical in } \Sc \text{ for every stable box }  \Sc \}$$ and define $\Gc_{r,n',\rm{vert}}$ similarly.

We set $ \nu^-_{r,\rm{vert}}:=  \mathbf 1_{\Gc_{r,\rm{vert}}} \nu^-_{r}$, $ \nu^-_{r,n',\rm{vert}}:=  \mathbf 1_{\Gc_{r,n',\rm{vert}}} \nu^-_{r}$  and 
$$\mathbf T^-_{r} := \int_{\Gc_{r,\rm{vert}}} [\Delta] \, d \nu^-_{r,\rm{vert}}(\Delta) \quad \text{and} \quad \mathbf T^-_{r,n'} := \int_{\Gc_{r,\rm{vert}}} [\Delta] \, d \nu^-_{r,n',\rm{vert}}(\Delta).$$

It is clear that $ 0 \leq \mathbf T^-_{r} \leq  \T^-_{r}$ and  $ 0 \leq \mathbf T^-_{r,n'} \leq   \T^-_{r,n'}$.

\begin{lemma} \label{lemma:support-corridor}
Both $ \T^-_{r} -   \mathbf  T^-_{r}$ and  $ \T^-_{r,n'} -   \mathbf  T^-_{r,n'}$ are supported by the union of all vertical corridors and the stable boxes disjoint from $\Bc^*$. 
\end{lemma}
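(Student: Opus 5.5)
The plan is to work plaque by plaque with the transverse measure. By definition
$$\T^-_{r} - \mathbf T^-_{r} = \int_{\Gc_\Sc \setminus \Gc_{r,\rm vert}} [\Delta]\, d\nu^-_{r}(\Delta),$$
and similarly at level $n'$. It is therefore enough to show the following: if $\Delta$ is a plaque of $\Lc^-_{r}$ that fails to be vertical in some stable box $\Sc$, then every point of $\Delta \cap \Sc$ lies in a vertical corridor or in a stable box disjoint from $\Bc^*$. Since the support of the integral is contained in the closure of the union of the plaques charged by the measure, and corridors and boxes are unions of tiles, this gives the support statement, up to taking closures, which are harmless at the level of the boundaries of the tiles.

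First I fix a stable box $\Sc$ above the square $Q_{\rho^{60}}$ and distinguish two cases.

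\textbf{Case 1: $\Sc \cap \Bc^* = \varnothing$.} Then $\Delta\cap\Sc$ lies in a stable box disjoint from $\Bc^*$ and there is nothing to prove.

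\textbf{Case 2: $\Sc$ contains a point $x\in\Bc^*$.} Here I use the geometry of Section \ref{sec:graph-transform}. By Proposition \ref{prop:one-projection}, plaques of $\T^-_r$ are graphs over $D$ with slope at most $\rho^{-10}$ above squares of side $\rho^{25}$. The key input, however, is the angle control. By Proposition \ref{prop:angle-holder} combined with (P8), every unstable plaque meeting $\Sc$ has tangent within angle about $\rho^{10}$ of $\Tan_x\Delta^{\u}(x)$, which itself makes an angle at least $\rho^7$ with the stable direction. Via Lemma \ref{lemma:frames}, a plaque passing through a point of $\Sc$ can then be written as a graph transverse to the straightened horizontal lamination. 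Its height in the $D^\perp$-direction of the box is only $(\rho^{100}/2)^{1/\theta_-}$, so, since the Lipschitz constant is bounded by $\rho^{-10}$ and by the Hölder bounds on $\Psi$ from Proposition \ref{prop:holo-motion}, its horizontal displacement inside $\Sc$ is at most of order $\rho^{-10}\rho^{100\theta_-^{-1}\theta_-} \ll r^\gamma\rho^{60}$.

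It follows that a plaque entering the core $r^\gamma Q_{\rho^{60}}$ of $\Sc$ projects under $\pi_\Sc$ into a set compactly contained in $Q_{\rho^{60}}$, hence is vertical in $\Sc$. Consequently, if $\Delta\cap\Sc$ is not vertical, it cannot meet the central region and must be contained in $\pi_D^{-1}(Q_{\rho^{60}}\setminus r^\gamma Q_{\rho^{60}})$, which is the vertical corridor.

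Finally, $\Gc_{r,\rm vert}$ requires verticality in every box. A non-vertical plaque may therefore be vertical in some boxes and not in others, and the argument above must be applied box by box. The same reasoning applies verbatim to $\T^-_{r,n'}$, with $\Bc^*$ and (P8) at level $n'$.

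I expect the main obstacle to be the quantitative claim in Case 2 that a plaque meeting the core of $\Sc$ stays within the box's width across the full height. This requires checking that the exponents ($\rho^{100}$ height, $\rho^{-10}$ slope, $\theta_-$ distortion of $\Psi$, $r^\gamma\rho^{60}$ corridor width) are compatible; a careful comparison should suffice given $\rho=r^\kappa$.
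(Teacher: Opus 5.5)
Your argument is essentially the paper's: you split according to whether $\Sc$ meets $\Bc^*$. In the nontrivial case you use the transversality at a point $x\in\Bc^*\cap\Sc$ (property (P8) together with Proposition \ref{prop:angle-holder}) and the height $\rho^{100}$ of the box to show that a non-vertical piece of an unstable plaque has horizontal extent $O(\rho^{-10}\rho^{100})=O(\rho^{90})\ll r^\gamma\rho^{60}$ inside $\Sc$, and hence lies in the vertical corridor. The paper obtains the same displacement bound through Lemma \ref{lemma:single-point}: every point of $\Delta^{\u}(y)\cap\Sc$ lies within $100\rho^{90}$ of the single point $\Delta^{\u}(y)\cap\Delta^{\s}(x)$, which also excludes re-entry into the box. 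Like you, the paper implicitly removes the non-vertical pieces defining $\mathbf T^-_r$ box by box.
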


\begin{proof}
Let $y \in \supp \big(  \T^-_{r} -   \mathbf  T^-_{r} \big) $.   By the definition of $ \T^-_{r}$ and $\mathbf  T^-_{r}$ the point $y$ belongs to a holomorphic disc $\Delta^{\u}(y)$ subordinate to $ \T^-_{r}$ that is not vertical inside some stable box $\Sc$. If $\Sc$ is disjoint from $\Bc^*$ there's nothing to show.   Assume otherwise and fix $x \in   \Bc^* \cap \Sc$.  Let $L$ be the vertical line in $\C^2$ passing through $y$ and $B:= L \cap \Sc$ be the portion of $L$ inside $\Sc$.  In the notation of Section \ref{sec:graph-transform}, we have $\Sc = \Psi\big(Q_{\rho^{60}} \times Q^\perp_{ (\rho^{100}/2)^{1 / \theta_-}}  \big)$ and by Proposition \ref{prop:holo-motion} we see that the diameter of $B$ is at most $ \rho^{100}$.

Let $\Delta^{\s}(x)$ be the plaque of  $ \T^+_{r}$ through $x$.  Since $\Delta^{\s}(x)$ is contained in a graph of $\Gc_\Sc$ (Lemma \ref{lemma:plaques-in-G}), it intersects $B$ at a single point $w:= \Delta^{\s}(x) \cap B$. Observe that $\Delta^{\s}(w)  = \Delta^{\s}(x)$ and that $x,y$ and $w$ all belong to $\Sc$, which has diameter less than $\rho^{50}$.  Moreover,   $w$ and $y$ belong to $B$,  so  $\dist(w,y) \leq \rho^{100}$. By  Lemma \ref{lemma:single-point} below,  $\Delta^{\s}(w)$ and $\Delta^{\u}(y) $ intersect at a single point $\mathtt p$ with $\dist(y, \mathtt p) \leq 100 \rho^{-10} \dist(w,y) \leq 100 \rho^{90}$.   Since   $\Delta^{\u}(y)$ is not vertical in $\Sc$, after replacing $y$ by another point $y' \in  \Delta^{\u}(y)$ if necessary (in particular $ \Delta^{\u}(y) =  \Delta^{\u}(y')$ and $y'$ still belongs to the support of  $ \T^-_{r} $),  we can we can assume that the above point $\mathtt p$ is outside $\Sc$.

On one hand,  we have that  $\mathtt p \notin \Sc$ and $\dist(y, \mathtt p) \leq 100 \rho^{90}$.  On the other hand,  the width of the vertical corridor inside $\Sc$ is $r^\gamma \rho^{60}$,  so $y$ must belong to $\pi_D^{-1}\big(Q_{\rho^{60}} \setminus r^\gamma Q_{\rho^{60}} \big)) \cap \Sc$,  the portion of the vertical corridor contained in $\Sc$. 
\end{proof}

\begin{lemma} \label{lemma:single-point}
Let $x,y \in \Bc^*$. Let $\Delta^{\s}(x)$ the plaque of $ \T^+_{r}$ through $x$  and  $\Delta^{\u}(y)$ that of $ \T^-_{r}$ through $y$ respectively.  Assume that $\dist(x,y) \leq \rho^{50}$.  Then $\Delta^{\s}(x)$ and $\Delta^{\u}(y)$ intersect at a single point $\mathtt p$.  Moreover,  $\dist(x, \mathtt p) \leq 100 \rho^{-10} \dist(x,y)$ and $\dist(y, \mathtt p) \leq 100 \rho^{-10} \dist(x,y)$.   The same holds for the intersection of $\Delta^{\u}(x)$ and $\Delta^{\s}(y)$.
\end{lemma}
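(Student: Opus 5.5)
Work in the affine frame $(z,w)$ centered at $x$ induced by the splitting $\Tan_x \Delta^{\s}(x) \oplus \Tan_x \Delta^{\u}(x)$; this is the frame $\Ec_0$ of Definition~\ref{def:frames}, obtained from the canonical one by $\tau_x \circ C(x)$. By Lemma~\ref{lemma:pesin-matrix} and Property~(P8), the change of frame $C(x)$ and its inverse have norm at most $2\rho^{-6}$, so distances in the two frames are comparable up to this factor.

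\emph{Step 1: both plaques become graphs.} Apply Lemma~\ref{lemma:frames}(1) twice. First take $L=\Tan_x\Delta^{\u}(x)$: the angle condition $\measuredangle\ge\rho^{10}$ holds by (P8), since $\rho^7\ge\rho^{10}$. This writes $\Delta^{\s}(x)$ as a graph $w=\varphi(z)$ over $|z|<\tfrac14\rho^{14}$ with $\varphi(0)=0$ and $\Lip(\varphi)\le r$. Then exchange the roles of the two plaques and write $\Delta^{\u}(x)$ as a graph $z=\chi(w)$ with $\chi(0)=0$ and $\Lip(\chi)\le r$.

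The plaque $\Delta^{\u}(y)$ is handled differently, since it passes through the nearby point $y$. Its tangent direction at $y$ makes angle less than $\rho^{10}$ with $\Tan_x\Delta^{\u}(x)$. When the two unstable plaques meet, this follows from Proposition~\ref{prop:angle-holder}. In general, I would use the Hölder transverse regularity of Proposition~\ref{prop:holo-motion} together with $\dist(x,y)\le\rho^{50}$. Given this angle bound, Lemma~\ref{lemma:frames}(1) applied at $y$ with $L=\Tan_x\Delta^{\s}(x)$ writes $\Delta^{\u}(y)$ as a graph $z=\psi(w)$ over a disc of radius about $\rho^{14}$ centered at the $w$-coordinate of $y$. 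Because the frames at $x$ and at $y$ differ by an angle of order $\rho^{10}$, the Lipschitz constant of $\psi$ degrades slightly, but stays at most $1/2$.

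\emph{Step 2: intersection by a contraction.} An intersection point corresponds to a fixed point of
$$w\mapsto\varphi(\psi(w)).$$
This map has Lipschitz constant at most $r/2<1$. It sends the disc $|w|\le C\rho^{-6}\dist(x,y)$ into itself, because the coordinates of $y$ in $\Ec_0$ are $O(\rho^{-6}\dist(x,y))$, which is much smaller than $\rho^{14}$. The Banach fixed point theorem then gives a unique intersection point $\mathtt p$ inside the common domain. The resulting bound is
$$|\mathtt p - x|_{\Ec_0}\le 4\,|y-x|_{\Ec_0}.$$
Returning to Euclidean coordinates costs the factor $\|C(x)\|\,\|C(x)^{-1}\|\le 4\rho^{-12}$. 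To reach the claimed constant $100\rho^{-10}$, I would instead argue directly with the Euclidean angle $\rho^7$: the intersection of two nearly flat transverse discs with angle $\theta$ lies within $O(\theta^{-1})\dist(x,y)$ of both points. This gives $\dist(x,\mathtt p),\ \dist(y,\mathtt p)\le 100\rho^{-10}\dist(x,y)$ with room to spare, since $\rho^{-7}\ll\rho^{-10}$.

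\emph{Step 3: no other intersections.} Uniqueness must hold on the whole plaques, not only in the small disc. Outside the disc where the contraction argument applies, I would use the corollary to (P8) that stable and unstable discs do not meet before time $\rho^{10}$ except at the base point, now applied at $\mathtt p$. Alternatively, I would use that both plaques stay within $O(r\rho^{14})$ of their tangent lines over domains of size $\rho^{25}$, together with the transversality given by the angle bound. The symmetric statement for $\Delta^{\u}(x)\cap\Delta^{\s}(y)$ follows by exchanging the roles of $x$ and $y$.

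The main obstacle is Step 1 for $\Delta^{\u}(y)$: controlling its tangent direction at $y$ relative to that of $\Delta^{\u}(x)$ when the two unstable plaques are distinct. Proposition~\ref{prop:angle-holder} only covers intersecting plaques. I would first try the Harnack/$\lambda$-lemma estimate behind Proposition~\ref{prop:holo-motion}, which bounds the difference of slopes of nearby leaves by a power of their separation.
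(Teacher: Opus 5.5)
The paper states this lemma without proof, so there is nothing to compare against line by line. Your strategy is the natural one with the paper's tools: work in the adapted frame $\Ec_0$ at $x$, use Lemma~\ref{lemma:frames} to make $\Delta^{\s}(x)$ and $\Delta^{\u}(y)$ transverse Lipschitz graphs, and obtain $\mathtt p$ as the fixed point of the contraction $\varphi\circ\psi$. It works once the step you flag is actually carried out, and two other points need correcting.

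\textbf{The flagged step.} This is the only substantive gap, and the remedy you suggest does close it.
\begin{itemize}
\item[(a)] Write $\Delta^{\u}(x)$ and $\Delta^{\u}(y)$ as $\pi_D$-graphs $s=\phi_x(t)$ and $s=\phi_y(t)$ over a disc of radius $N\rho^{26}$ about $t_x=\pi_D(x)$.
\item[(b)] If the plaques are distinct, $h=\phi_x-\phi_y$ is bounded (both graphs lie in $K$) and zero-free (distinct plaques are disjoint). The slope bound $\rho^{-10}$ from (P9) gives $|h(t_x)|\le(1+\rho^{-10})\dist(x,y)\le 2\rho^{40}$.
\item[(c)] The Harnack estimate behind Proposition~\ref{prop:holo-motion}, applied to $\T^-_r$ and recentered at $t_x$, gives $|h|\lesssim\rho^{40\theta_-}$ on a disc of radius comparable to $\rho^{26}$.
\item[(d)] Cauchy's estimate then yields $|h'(t_x)|\lesssim\rho^{40\theta_- -26}\le\rho^{13}$. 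This uses that the choice of $N$ forces $\theta_->0.99$; with merely $\theta_->1/2$ the bound would be useless.
\item[(e)] In the orthonormal frame $D\oplus D^\perp$, the angle between two complex lines is controlled by the difference of their slopes. Moving along $\Delta^{\u}(y)$ from the point above $t_x$ to $y$ turns the tangent by at most $\rho^{50}\sup|\phi_y''|\lesssim\rho^{50}\rho^{-10}\rho^{-26}$, by Cauchy from $|\phi_y'|\le\rho^{-10}$.
\end{itemize}
Hence $\measuredangle(\Tan_y\Delta^{\u}(y),\Tan_x\Delta^{\u}(x))<\rho^{10}$ whether or not the plaques meet. If they coincide, only the curvature step in (e) is needed. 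You are right that Proposition~\ref{prop:angle-holder} as stated does not give this. The same general estimate is what the proof of Proposition~\ref{prop:graph-transform} tacitly needs for $y=f^{-n}(x)$.

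\textbf{The constant.} This is a non-issue. $C(x)$ has unit columns, so $\|C(x)\|\le\sqrt2$, and only $\|C(x)^{-1}\|\le2\rho^{-6}$ (Lemma~\ref{lemma:pesin-matrix}) costs anything. Your bound $|\mathtt p-x|_{\Ec_0}\le4|y-x|_{\Ec_0}$ then gives $\dist(x,\mathtt p)\le 8\sqrt2\,\rho^{-6}\dist(x,y)$ and $\dist(y,\mathtt p)\le(1+8\sqrt2\,\rho^{-6})\dist(x,y)$. Both are below $100\rho^{-10}\dist(x,y)$, so the vague ``Euclidean angle'' detour is unnecessary.

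\textbf{Step 3.} This step should be dropped.
\begin{itemize}
\item[(a)] The corollary after (P8) concerns the two plaques through a single point of $\overline\Bc^*$, and $\mathtt p$ need not lie in $\Bc^*$.
\item[(b)] Rerunning its angle-plus-curvature argument at $\mathtt p$ only gives uniqueness near $\mathtt p$. On the full plaques, which are graphs over squares of side $4r$ with curvature up to $c_0r^{-2}$ (Lemma~\ref{lemma:slope0}) and which meet at angle only about $\rho^7$, nothing excludes further intersections. Your alternative via tangent lines is likewise only local.
\item[(c)] So ``single point'' can only be meant locally, which is all Lemma~\ref{lemma:support-corridor} uses.
\end{itemize}
That local statement is already contained in your Step 2. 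Two fixed points $w_1,w_2$ of $\varphi\circ\psi$ anywhere in its domain satisfy $|w_1-w_2|\le\frac r2|w_1-w_2|$. Hence $\Delta^{\s}(x)\cap\Delta^{\u}(y)$ is a single point in the whole bidisc of $\Ec_0$-coordinates where both are graphs, not just in the small invariant disc. That bidisc contains a Euclidean neighbourhood of $x$ of size about $\rho^{20}$.
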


Recall that $K$ is a large bi-disc in the coordinates $(t,s)$ associated with the splitting $D \oplus D^\perp$ containing the Julia set of $f$.   Let $\Cc$ the union of all vertical and horizontal corridors intersecting $K$ as in Lemma \ref{lemma:dujardin-fubini2}. 

 Fix $x \in \Bc^* \cap (K \setminus \Cc)$.  By the above mentioned lemma and Property (P10) this is a set of large measure.  Let $\Sc = \Psi\big(Q_{\rho^{60}} \times Q_{ (\rho^{100}/2)^{1 / \theta_-}}  \big)$ be a stable box containing $x$ as in Definition \ref{def:stable-box} and $\Gc_\Sc$ the associated space of stable-like graphs (Defintion \ref{def:stable-like-graphs}).
 
 Let $x':= f^{-n'}(x)$. Denote by $$\Sc' = \Psi'\big(Q'_{\rho^{60}} \times Q'_{ (\rho^{100}/2)^{1 / \theta_-}}  \big)$$ the stable box containing $x'$  and $\Gc_{\Sc'}$ the corresponding stable box and space of stable-like graphs as in Defintion \ref{def:stable-like-graphs}.  We'll produce a periodic point inside $\Sc$ by working with the couple  $(\Sc,\Sc')$ and using the exponential mixing.

 Recall that every plaque of $\T^+_{r}$ in $\Sc$ is contained in a graph of $\Gc_\Sc$,  see Lemma \ref{lemma:plaques-in-G}.  The graph transform introduced in Section \ref{sec:graph-transform} was obtained from the successive images of elements $\Gamma \in \Gc_\Sc$ along the composition $g_n \circ g_{\ell -1} \circ \cdots g_1 \circ g_0$.  Denote by $\Lambda'$  the partial graph transform associated with $g_{\ceil{\ell/2}} \circ \cdots g_1 \circ g_0$.  Then $\Lambda'(\Gamma) \cap \Sc' \in \Gc_{\Sc'}$.
 
 \begin{definition} \label{def:good-component-A} Let   $x$ and $n'$ be as above.  We denote  by $\Gc_\Sc(x)$ the subset of $\Gc_\Sc$ consisting of the graphs $\Gamma \in \Gc_\Sc$ containing a  plaque of $\T^+_{r}$ and by $\Gc_\Sc(f^{-n'}(x))$ the subset of $\Gc_{\Sc'}$ consisting of  the graphs $\Gamma'$ containing the cut-off image  $\Lambda'(\Gamma) \cap \Sc'$ for some $\Gamma \in \Gc_\Sc(x)$.   
 Define 
 $$\Ac:= \{ \Gamma' \cap \Sc_{\Sc'} : \Gamma' \in \Gc_\Sc(f^{-n'}(x))\}$$
 and $\widehat \Ac$ be the corresponding connected component of $f^{-n'}(\Sc) \cap \Sc$ after applying $f^{-n'}$.  We call such  $\widehat \Ac$ a \textit{good component}. 
 \end{definition}

  The following lemma is a  version of \cite[Lemma 5]{bedford-lyubich-smillie} that takes into account the splitting $f^n = f^{n - n'} \circ f^{n'}$.  Set
\begin{equation} \label{eq:mu-hat-r}
  \widehat \mu_r:= \Big( \frac{1}{d^{n'}} (f^{n'})^* \T^+_{r} \Big)  \wedge \mathbf T^-_{r,-n'}.
\end{equation}

 \begin{lemma} \label{lemma:bls-mixed}  Let $\Delta^{\s}(\Sc)$ be a fixed plaque  in $\Ac$ and $\Delta^{\u}(\Sc'): = f^{n'}(\Delta^{\u}_0) \cap \Sc$,   where $\Delta^{\u}_0$ is a fixed plaque of $ \mathbf T^-_{r,-n'}$  in $\Sc'$.  Then,   
 \begin{equation*}
 \widehat \mu_r (\, \widehat {\Ac} \, ) =  \frac{1}{d^{n'}}  \int_{\Delta^{\u}(\Sc)}  \T^+_{r}  \cdot \int_{\Delta^{\s}(\Sc')}  \mathbf T^-_{r,-n'} 
 \end{equation*}
 \end{lemma}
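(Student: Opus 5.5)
We follow the proof of \cite[Lemma 5]{bedford-lyubich-smillie}: a product formula obtained from the product structure of the uniformly laminar currents on a good component, plus invariance of intersection numbers under $f^{n'}$. The statement's indices are somewhat garbled, so we read it as follows. $\Delta^{\s}(\Sc)$ is a horizontal plaque crossing the good component $\widehat\Ac$, and $\Delta^{\u}(\Sc)$ denotes a vertical disc in $\Sc$ such as $f^{n'}(\Delta^{\u}_0)\cap\Sc$. The right-hand side is the product of the transverse mass of $\T^+_r$ on $\Sc$, measured along a vertical disc, and the transverse mass of $\mathbf T^-_{r,-n'}$ on $\Sc'$, measured along a horizontal plaque; the factor $d^{-n'}$ accounts for the pull-back.

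First we write both currents on the relevant boxes as integrals of plaques. We have $\T^+_r|_{\Sc}=\int[\Delta]\,d\nu^+_{\Sc}(\Delta)$, where $\nu^+_\Sc$ is carried by graphs in $\Gc_\Sc$ (Lemma \ref{lemma:plaques-in-G}). Similarly $\mathbf T^-_{r,-n'}|_{\Sc'}=\int[\Delta']\,d\nu^-_{\Sc'}(\Delta')$, with every $\Delta'$ vertical in $\Sc'$ by the definition of $\Gc_{r,\rm vert}$. Pulling back by $f^{n'}$ gives
\[
\frac{1}{d^{n'}}(f^{n'})^*\T^+_r\big|_{\widehat\Ac}=\frac{1}{d^{n'}}\int [f^{-n'}(\Delta)\cap\widehat\Ac]\,d\nu^+_\Sc(\Delta).
\]
By the graph transform of Section \ref{sec:graph-transform}, applied partially through $\Lambda'$, each $f^{-n'}(\Delta)\cap\Sc'$ is again a stable-like graph in $\Gc_{\Sc'}$ that crosses $\Sc'$ horizontally. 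Both families in $\widehat\Ac$ (or $\Sc'$) are therefore laminations by graphs: one horizontal of slope at most $3/4$, the other vertical. Property (P8) and Lemma \ref{lemma:single-point} then show that each horizontal graph meets each vertical disc in exactly one point, transversally and with multiplicity one.

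Next we compute the wedge product as the geometric intersection. By Dujardin's result (as in Proposition \ref{prop:geometric-intersection0}), uniform laminarity gives $\wedge=\dot\wedge$. Hence
\[
\widehat\mu_r(\widehat\Ac)=\frac{1}{d^{n'}}\iint \#\big(f^{-n'}(\Delta)\cap\Delta'\cap\widehat\Ac\big)\,d\nu^+_\Sc(\Delta)\,d\nu^-_{\Sc'}(\Delta')=\frac{1}{d^{n'}}\,\|\nu^+_\Sc\|\,\|\nu^-_{\Sc'}\|,
\]
where we restrict to plaques actually entering $\widehat\Ac$. The last step is to identify these transverse masses with the two slice integrals. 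Take a vertical disc $V$ in $\Sc$ that meets every horizontal plaque exactly once, such as $\Delta^{\u}(\Sc)$. Then $\int_V\T^+_r=\int\#(\Delta\cap V)\,d\nu^+_\Sc=\|\nu^+_\Sc\|$. Symmetrically, for a horizontal plaque $H$ in $\Sc'$, $\int_H\mathbf T^-_{r,-n'}=\|\nu^-_{\Sc'}\|$. Both slice integrals are independent of the disc chosen, by closedness of the laminar currents inside the box and Stokes, since the boundary contributions lie in the corridors. Transporting the $\Sc$-side quantity through $f^{n'}$ preserves intersection numbers, and this yields exactly the claimed product.

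The main obstacle is the exact ``one intersection point'' claim, which must hold for every pair of plaques and not only generically. Plaques of $\T^+_r$ pulled back by $f^{n'}$ must cross $\widehat\Ac$ fully, and plaques of $\mathbf T^-_{r,-n'}$ must not escape through the horizontal boundary. To secure this we would use verticality, which is why $\mathbf T^-$ is restricted to $\Gc_{r,\rm vert}$, together with the saturation of stable boxes by $\Lc^+_r$. Lemma \ref{lemma:support-corridor} and the exclusion of corridors ensure that no partial crossings occur. The remaining care is in checking that the size bounds of Lemma \ref{lemma:g-Gamma} keep the transformed graphs inside the domain where Theorem \ref{thm:DN} applies.
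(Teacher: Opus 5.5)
Your proposal is correct and is essentially the paper's argument, namely the Fubini-type product computation of \cite[Lemma 5]{bedford-lyubich-smillie}. The paper disintegrates only $\mathbf T^-_{r,-n'}$ into its vertical plaques, pushes each $\Delta^{\u}\cap\widehat\Ac$ forward by $f^{n'}$ to a vertical disc in $\Sc$ on which the slice mass of the horizontal current $\T^+_r$ does not depend on the disc, and then identifies the total transverse mass with $\int_{\Delta^{\s}(\Sc')}\mathbf T^-_{r,-n'}$ via the one-point intersection; you instead disintegrate both currents and count intersection points, which is the same computation. As in the paper, the exact one-point intersection of each pulled-back horizontal graph with each vertical plaque is asserted rather than derived, and Lemma \ref{lemma:single-point} as stated only concerns plaques of $\T^\pm_r$ through points of $\Bc^*$, so that step is where the remaining care lies.
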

 
 \begin{proof}
 Observe that any such  $\Delta^{\u}(\Sc')$ intersects any plaque of $ \T^+_{r} \,$ graph of $\Gc_{r,n',\rm{vert}}$ in a single point.  Moreover,  for any horizontal $(1,1)$-current $S$,  the intersection $S \wedge \Delta^{\u}(\Sc)$ is well defined and its mass is independent of  $\Delta^{\u}(\Sc)$.
 By the definition of $ \mathbf T^-_{r,-n'} $,  the left hand side of the inequality we want to obtain is equal to
 \begin{align*}
 \frac{1}{d^{n'}}  \int_{\Gc_{r,n',\rm{vert}}} \int_{\Delta^{\u} \cap \widehat \Ac } \, (f^{n'})^* \T^+_{r} \, d \nu^-_{r,n',\rm{vert}}(\Delta^{\u}) = \frac{1}{d^{n'}}   \int_{\Gc_{r,n',\rm{vert}}} \int_{f^{n'} (\Delta^{\u} \cap \widehat \Ac ) } \ \T^+_{r} \, d \nu^-_{r,n',\rm{vert}}(\Delta^{\u})
 \end{align*}
 
 From the above remarks,  the intersection $ \T^+_{r} \wedge [f^{n'} (\Delta^{\u} )]$ is independent of $\Delta^{\u}$ and equals $ \T^+_{r} \wedge [\Delta^{\u} (\Sc)]$ , so the last integral equals
 \begin{align*}
  \frac{1}{d^{n'}}  \int_{ \Delta^{\u}(\Sc) } \ \T^+_{r}  \cdot  \int_{\Gc_{r,n',\rm{vert}}} \, d \nu^-_{r,n',\rm{vert}}(\Delta^{\u}).
 \end{align*}
 
 On the other hand, 
 $$ \int_{\Gc_{r,n',\rm{vert}}} \, d \nu^-_{r,n',\rm{vert}}(\Delta^{\u}) = \int_{\Gc_{r,n',\rm{vert}}} \int [\Delta^{\s}(\Sc') \cap \Delta^{\u}] \, d \nu^-_{r,n',\rm{vert}}(\Delta^{\u}) =  \int_{\Delta^{\s}(\Sc')}  \mathbf T^-_{r,-n'} $$
 where we have  used that $\Delta^{\s}(\Sc') \cap \Delta^{\u}$ is a single point.  This finishes the proof. 
 \end{proof}

\begin{lemma} \label{lemma:mu-r-hat-mass}
Let $ \widehat \mu_r $ be as above.  Then $ \widehat \mu_r \leq   \mu $ and $ \| \mu -  \widehat \mu_r \| \leq C r^{\gamma }$ for some constant $C >0$.
\end{lemma}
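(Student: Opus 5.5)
The plan is to compare $\widehat \mu_r$ with the geometric intersection $\mu_{r,-n'} = \T^+_{r,-n'} \dot\wedge \T^-_{r,-n'}$ pushed appropriately, or more directly to use invariance of $\mu$ under $f^{n'}$ together with the mass estimates already established. Note first that $\frac{1}{d^{n'}}(f^{n'})^*\T^+_r \leq \frac{1}{d^{n'}}(f^{n'})^* T^+ = T^+$ by (P1) and the invariance $(f)^*T^+ = d\,T^+$. Since $\mathbf T^-_{r,-n'} \leq \T^-_{r,-n'} \leq T^-$, and the current $\frac{1}{d^{n'}}(f^{n'})^*\T^+_r$ is uniformly laminar where it is used (its plaques are preimages of plaques of $\T^+_r$, transverse to the vertical plaques of $\mathbf T^-_{r,-n'}$ inside the stable boxes), the wedge product coincides with the geometric intersection by Dujardin's result, as in the proof of Proposition \ref{prop:geometric-intersection0}. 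Monotonicity of geometric intersection in both arguments then gives $\widehat \mu_r \leq T^+ \dot\wedge T^- \leq \mu$.

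For the mass estimate, write
$$\mu - \widehat\mu_r = \Big(T^+ - \tfrac{1}{d^{n'}}(f^{n'})^*\T^+_r\Big)\wedge T^- + \tfrac{1}{d^{n'}}(f^{n'})^*\T^+_r \wedge \big(T^- - \mathbf T^-_{r,-n'}\big),$$
with both terms positive, and bound each term's mass. For the first term, push forward by $f^{n'}$: since $(f^{n'})_* T^- = d^{-n'}\cdot$ (appropriately) $T^-$, i.e. $\frac{1}{d^{n'}}(f^{n'})^*(\cdot)\wedge T^-$ has the same mass as $(\cdot)\wedge T^-$ (this is the invariance of $\mu$ expressed at the level of currents), its mass equals the mass of $(T^+ - \T^+_r)\wedge T^-$. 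The latter is at most $\mu - \T^+_r\dot\wedge \T^-_r$ plus the defect $T^+_r\wedge(T^- - \T^-_r)$, both controlled by $Cr^\gamma$ via (P10) and Proposition \ref{prop:one-projection}(5); alternatively one repeats the cut-off/quasi-potential argument of Proposition \ref{prop:geometric-intersection0}, using Lemma \ref{lemma:dujardin-fubini} for the streets. For the second term, bound it by $T^+ \wedge (T^- - \mathbf T^-_{r,-n'})$ and split $T^- - \mathbf T^-_{r,-n'} = (T^- - \T^-_{r,-n'}) + (\T^-_{r,-n'} - \mathbf T^-_{r,-n'})$. The first piece contributes at most $Cr^\gamma$ exactly as in Proposition \ref{prop:geometric-intersection0} (applied to the level $-n'$ current). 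For the second piece, Lemma \ref{lemma:support-corridor} says it is supported on the vertical corridors and on stable boxes disjoint from $\Bc^*$; hence its intersection with $T^+$ is bounded by $\mu(\Cc) + \mu((\Bc^*)^c)$, which is $\leq Cr^\gamma$ by Lemma \ref{lemma:dujardin-fubini2} and (P11).

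The main obstacle is making the step ``$T^+\wedge(\text{current supported on a set } E)$ has mass $\leq \mu(E)$'' rigorous, since $\T^-_{r,-n'} - \mathbf T^-_{r,-n'}$ is not closed and $T^+\wedge$ of it need not be dominated pointwise by $\mu$. I would handle it by noting that $\T^-_{r,-n'} - \mathbf T^-_{r,-n'}$ is itself a uniformly laminar current $\leq T^-$ made of full plaques, so its intersection with $T^+$ (restricted away from the box boundaries, with the boundary layers absorbed into $\Cc$ via a cut-off as in Proposition \ref{prop:geometric-intersection0}) is dominated by $T^+\wedge T^- = \mu$ restricted to the support, giving the bound $\mu(\Cc\cup (\Bc^*)^c)+Cr^\gamma$. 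Collecting terms yields $\|\mu-\widehat\mu_r\|\leq Cr^\gamma$.
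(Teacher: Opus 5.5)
Your proposal is correct and is essentially the paper's argument. The ingredients are the same:
\begin{itemize}
\item monotonicity of the wedge product, giving $\widehat\mu_r\le\mu$;
\item the invariance $d^{-n'}(f^{n'})_*T^-=T^-$, which reduces the pull-back defect to $(T^+-\T^+_r)\wedge T^-$;
\item (P10), for the laminar defects;
\item Lemma~\ref{lemma:support-corridor}, Lemma~\ref{lemma:dujardin-fubini2} and (P11), together with domination by $\mu$, for the vertical truncation.
\end{itemize}
The only difference is bookkeeping: you use a two-term split, while the paper telescopes through $\mu_{r,-n'}$.

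Your bound $(T^+-\T^+_r)\wedge T^-\le \mu-\mu_r$ via (P10) is cleaner than the paper's pointer to the quasi-potential argument of Proposition~\ref{prop:geometric-intersection0}. That argument needs the $O(r^2)$ mass estimate, which holds for $T^\pm_r$ but not for $\T^\pm_r$, where only $100r^\gamma$ is available. For the same reason, bound the piece $T^+\wedge(T^--\T^-_{r,-n'})$ by the comparison $T^+\wedge(T^--\T^-_{r,-n'})\le\mu-\mu_{r,-n'}$, not ``exactly as in'' that proposition.
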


\begin{proof}
The first assertion is clear,  because $$\widehat \mu_r = \Big( \frac{1}{d^{n'}} (f^{n'})^* \T^+_{r} \Big)  \wedge \mathbf T^-_{r,-n'} \leq  \Big( \frac{1}{d^{n'}} (f^{n'})^* T^+ \Big)  \wedge  T^- =  T^+    \wedge  T^- = \mu.$$

In order to prove the second assertion,  we  write  $\mu -  \widehat \mu_r = \nu_a +  \nu_a + \nu_c$,
where $ \nu_a :=  \mu - \mu_{r,-n'}, $
$$ \nu_b:= \mu_{r,-n'} - \Big( \frac{1}{d^{n'}} (f^{n'})^* \T^+_{r} \Big)  \wedge T^-_{r,-n'}  \quad \text{and} \quad \nu_c :=  \Big( \frac{1}{d^{n'}} (f^{n'})^* \T^+_{r} \Big)  \wedge ( T^-_{r,-n'} -\mathbf T^-_{r,-n'}) $$
and let $\mathbf A := \|\nu_a\|$,  $\mathbf B := \|\nu_b\|$ and $\mathbf C := \|\nu_c\|$.  Then $\|\mu -  \widehat \mu_r \| =  \mathbf A +  \mathbf B +  \mathbf C$.  We estimate each term separately.   

 The term $\mathbf A$ is bounded by $C_1 r^\gamma$ for some constant $C_1 >0$ by Property (P10).  By Lemma \ref{lemma:support-corridor}, the measure $\nu_c$ is supported by $\Cc \cup (\Bc^*)^c$.  Since $\nu_c \leq \mu$,  Lemma \ref{lemma:dujardin-fubini2} and Property (P11) imply that $\mathbf C$ is bounded by $C_2 r^\gamma$ for some constant $C_2 > 0$.
 
 It remains to estimate $\mathbf B$.  We have, 
 \begin{align*}
 \nu_b &=  \Big(  \T^+_{r,-n'}  - \frac{1}{d^{n'}} (f^{n'})^* \T^+_{r} \Big)  \wedge T^-_{r,-n'} \leq \Big(  T^+  - \frac{1}{d^{n'}} (f^{n'})^* \T^+_{r} \Big)  \wedge T^-_{r,-n'} \\
 &=\frac{1}{d^{n'}}   (f^{n'})^*   \Big(  T^+  - \T^+_{r} \Big)  \wedge T^-_{r,-n'} \leq  \frac{1}{d^{n'}}   (f^{n'})^*   \Big(  T^+  - \T^+_{r} \Big)  \wedge T^-
 \end{align*}
 
 Integrating both sides,  we get that $\mathbf B$ is bounded by the mass of $(T^+  - \T^+_{r}) \wedge  \frac{1}{d^{n'}}   (f^{n'})_* T^-$  
$=(T^+  - \T^+_{r}) \wedge  T^-$ which is bounded  $C_3 r^{\gamma}$ for some constant $C_3 > 0$ (see the proof of Proposition \ref{prop:geometric-intersection0}). This finishes the proof of the lemma.
\end{proof}

\begin{definition}
Fix $x \in \Bc^* \cap (K \setminus \Cc)$ and $x' = f^{-n'}(x)$ and let $\Sc$ and $\Sc'$ be the corresponding stable boxes as above.  A  \textit{good components} of $f^{-n'}(\Sc') \cap \Sc$ 

We denote  by $\mathcal A_{n'}(\Sc,\Sc')$ the set  of  good components of $f^{-n'}(\Sc') \cap \Sc$ as in Definition \ref{def:good-component-A} and let $b_{\Sc,\Sc'}(n')$ be its cardinality.  
\end{definition}

\begin{proposition} \label{prop:mixing-upper}
Let $\Sc$ and $\Sc'$  be as above.  Then
$$\mu(f^{-n'}(\Sc) \cap \Sc') \leq b_{\Sc,\Sc'}(n')  \frac{1}{d^{n'}}  \int_{\Delta^{\u}(\Sc)}  \T^+_{r}  \cdot \int_{\Delta^{\s}(\Sc')}  \mathbf T^-_{r,-n'}  + m_{\Sc,\Sc'}(n'),$$
where $m_{\Sc,\Sc'}(n') $ are non-negative numbers depending on $\Sc$ and $\Sc'$.  Moreover,  $$\sum_{\Sc,\Sc'} m_{\Sc,\Sc'}(n')  \leq C r^\gamma,$$ where the (finite) sum runs over all pairs of stable boxes intersecting $K$.  The statement holds after replacing $n'$ by $n-n'$, $\T^+_{r}$ by $\T^+_{r,-n'}$, $\mathbf T^-_{r,-n'}$ by $\mathbf T^-_{r} $  and permuting $\Sc$ and $\Sc'$.
\end{proposition}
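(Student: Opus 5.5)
The plan is to decompose $\mu$ restricted to $f^{-n'}(\Sc)\cap\Sc'$ into a "good" part carried by $\widehat\mu_r$ and a remainder, and then evaluate the good part component by component using Lemma \ref{lemma:bls-mixed}. Concretely, I write
$$\mu\big(f^{-n'}(\Sc)\cap\Sc'\big)=\widehat\mu_r\big(f^{-n'}(\Sc)\cap\Sc'\big)+(\mu-\widehat\mu_r)\big(f^{-n'}(\Sc)\cap\Sc'\big).$$
I then define $m_{\Sc,\Sc'}(n')$ as the second term, plus the $\widehat\mu_r$-mass of the part of $f^{-n'}(\Sc)\cap\Sc'$ not covered by good components. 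By Lemma \ref{lemma:mu-r-hat-mass}, $\mu-\widehat\mu_r\ge 0$, so every $m_{\Sc,\Sc'}(n')$ is non-negative.

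The first step is to show that $\widehat\mu_r$ restricted to $f^{-n'}(\Sc)\cap\Sc'$ is carried by the union of the good components, up to an error mass that is itself controlled. The measure $\widehat\mu_r$ is the wedge product of $d^{-n'}(f^{n'})^*\T^+_{r}$ with the vertical current $\mathbf T^-_{r,-n'}$. Its support therefore lies on intersections of plaques $f^{-n'}(\Delta^{\s})$, with $\Delta^{\s}$ a plaque of $\T^+_r$ in $\Sc$, with vertical plaques of $\mathbf T^-_{r,-n'}$ in $\Sc'$.

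Fix a point $y$ of this support lying in $\Bc^*$ and outside the corridors $\Cc$. The partial graph transform $\Lambda'$ (built from $g_0,\dots,g_{\lceil \ell/2\rceil}$ via Theorem \ref{thm:DN} and Lemma \ref{lemma:g-Gamma}) carries the stable plaque through $f^{n'}(y)$ to a stable-like graph in $\Gc_{\Sc'}$. Hence the connected component of $f^{-n'}(\Sc)\cap\Sc'$ containing $y$ is one of the good components $\widehat\Ac$ of Definition \ref{def:good-component-A}. Points failing these conditions lie in $(\Bc^*)^c\cup\Cc$, possibly after pulling back by $f^{n'}$. Using $\widehat\mu_r\le\mu$, the invariance of $\mu$, Property (P11) and Lemma \ref{lemma:dujardin-fubini2}, the total mass of such points, summed over all pairs $(\Sc,\Sc')$, is at most $Cr^\gamma$.

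The second step uses that the good components are pairwise disjoint. For each good component, Lemma \ref{lemma:bls-mixed} gives exactly
$$\widehat\mu_r(\widehat\Ac)=d^{-n'}\int_{\Delta^{\u}(\Sc)}\T^+_r\cdot\int_{\Delta^{\s}(\Sc')}\mathbf T^-_{r,-n'},$$
and this value does not depend on the component. Summing over the $b_{\Sc,\Sc'}(n')$ good components gives the main term.

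For the bound on $\sum m_{\Sc,\Sc'}(n')$, the stable boxes form a partition of $K$. The sets $f^{-n'}(\Sc)\cap\Sc'$ over all pairs are therefore disjoint, so the first part of $m$ sums to at most $\|\mu-\widehat\mu_r\|\le Cr^\gamma$ by Lemma \ref{lemma:mu-r-hat-mass}. The second part sums to at most $Cr^\gamma$ by the first step.

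The symmetric statement, with $n-n'$, $\T^+_{r,-n'}$ and $\mathbf T^-_r$, follows by the same argument. It uses the analogous measure $\big(d^{-(n-n')}(f^{n-n'})^*\T^+_{r,-n'}\big)\wedge\mathbf T^-_r$, with the mass estimate of Lemma \ref{lemma:mu-r-hat-mass} reproved verbatim.

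The main obstacle is the first step: certifying that every support point of $\widehat\mu_r$ outside a small exceptional set lies in a genuinely good component. This requires checking two things. First, the cut-off images $\Lambda'(\Gamma)\cap\Sc'$ are stable-like graphs that cross $\Sc'$ fully, which is where the size constraints $\rho^{60}\times\rho^{100}$ and the verticality of $\mathbf T^-_{r,-n'}$ enter. Second, no point gets counted in two components. I would handle this with the contraction and Lipschitz estimates of Theorem \ref{thm:DN}, in the same way as in Proposition \ref{prop:graph-transform}, together with Lemma \ref{lemma:support-corridor} to push all failures into the corridors.
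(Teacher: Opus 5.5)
Your proposal is correct and follows the paper's argument; only the bookkeeping is regrouped. Your remainder $(\mu-\widehat\mu_r)\big(f^{-n'}(\Sc)\cap\Sc'\big)+\widehat\mu_r(\text{uncovered part})$ is identical to the paper's $\sum_{\widehat\Ac}(\mu-\widehat\mu_r)(\widehat\Ac)+\mu(\text{uncovered part})$, and both are bounded by the same two ingredients: disjointness of the sets $f^{-n'}(\Sc)\cap\Sc'$ together with Lemma \ref{lemma:mu-r-hat-mass}, and the fact that points of $f^{-n'}(\Sc\cap\Bc^*)\cap\Sc'$ lie in good components, combined with invariance of $\mu$ and (P11). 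The paper simply asserts this covering property from the definition of good components. Your extra care on that step, including the additional corridor exclusion, is harmless.
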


\begin{proof}
We first split
$$\mu(f^{-n'}(\Sc) \cap \Sc') = \sum_{\widehat \Ac \in \mathcal A_{n'}(\Sc,\Sc')} \mu(\widehat \Ac) + \mu\big( (f^{-n'}(\Sc) \cap \Sc') \setminus \cup_{\widehat \Ac \in \mathcal A_{n'}(\Sc,\Sc')} \widehat \Ac  \,  \big).$$

Write the summand in first term as
$$
\mu(\widehat \Ac)  =  \widehat \mu_r(\widehat \Ac) + \big( \mu(\widehat \Ac)  -  \widehat \mu_r  (\widehat \Ac) \big),
$$
where $\widehat \mu_r$ is defined in \eqref{eq:mu-hat-r}.

Using Lemma \ref{lemma:bls-mixed} and summing over $\widehat \Ac$,   we get that 
$$\mu(f^{-n'}(\Sc) \cap \Sc') \leq b_{\Sc,\Sc'}(n')  \frac{1}{d^{n'}}  \int_{\Delta^{\u}(\Sc)}  \T^+_{r}  \cdot \int_{\Delta^{\s}(\Sc')}  \mathbf T^-_{r,-n'}  + m_{\Sc,\Sc'}(n'), $$
where 
$$ m_{\Sc,\Sc'}(n'):= \sum_{\widehat \Ac \in \mathcal A_{n'}(\Sc,\Sc')} \big( \mu(\widehat \Ac)  -  \widehat \mu_r  (\widehat \Ac) \big) +  \mu\big( (f^{-n'}(\Sc) \cap \Sc') \setminus \cup_{\widehat \Ac \in \mathcal A_{n'}(\Sc,\Sc')} \widehat \Ac  \,  \big).$$

It remains to estimate the sum of $m_{\Sc,\Sc'}(n')$ over the pairs $(\Sc,\Sc')$.  By the definition of the sets $\widehat \Ac$,  every point of $f^{-n'}(\Sc \cap \Bc^*) \cap \Sc'$ belongs to some good component $\widehat \Ac$,  so the set  $(f^{-n'}(\Sc) \cap \Sc') \setminus \cup_{\widehat \Ac \in \mathcal A_{n'}(\Sc,\Sc')} \widehat \Ac$ is included in $f^{-n'}(\Bc^*)$.  Together with the invariance of $\mu$ and  the fact that the sets $f^{-n'}(\Sc) \cap \Sc'$ are pairwise disjoint,  we obtain that the sum over the pairs $(\Sc,\Sc')$ of the second term in the definition of $m_{\Sc,\Sc'}(n')$ is bounded by $\mu((\Bc^*)^c) \leq C_1 r^\gamma$.  For the remaining sum,  we observe that that the $\widehat \Ac$'s and the pairs $f^{-n'}(\Sc) \cap \Sc'$ are all pairwise disjoint,  so the desired open bound follows from the mass estimate of Lemma \ref{lemma:mu-r-hat-mass}.
\end{proof}

The following notions are the analogue of the ones \cite{ddk:periodic-Pk} adapted to our setting.

\begin{definition} \label{def:nice}
We say that $\Sc$ is \textit{nice} with respect to $\Sc'$ at order $n'$ if $$m_{\Sc,\Sc'}(n') \leq r^{\gamma / 2} \mu(f^{-n'}(\Sc) \cap \Sc').$$
Otherwise,  we say that  $\Sc$ is bad with respect to $\Sc'$.  The same definition applies to $n- n'$ instead of  $n'$ by switching the roles of $\Sc$ and $\Sc'$.
We say that a couple $(\Sc,\Sc')$ is \textit{good} if $\Sc$ is nice with respect to $\Sc'$ at order $n'$ \textit{and} $\Sc$ is nice with respect to $\Sc'$ at order $n-n'$.
\end{definition}

\begin{proposition} \label{prop:good-couple}
Let $(\Sc,\Sc')$ be a good couple.  Then,  $$b_{\Sc,\Sc'}(n')  b_{\Sc',\Sc}(n - n') \geq (1-r^{\gamma / 2})^2 d^n \big( \mu(\Sc) \mu(\Sc') - \varepsilon_{\Sc,\Sc'} (n)\big),$$
where $\varepsilon_{\Sc,\Sc'} (n)$ are non negative numbers such that $\sum_{\Sc,\Sc'} \varepsilon_{\Sc,\Sc'} (n) \leq C r^\gamma$ for some constant $C>0$.
\end{proposition}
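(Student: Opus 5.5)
The plan is to multiply the two upper bounds of Proposition \ref{prop:mixing-upper} (one at order $n'$, one at order $n-n'$), use the niceness of the couple to absorb the error terms $m$, and then use exponential mixing of $\mu$ to replace the two measures of intersections by $\mu(\Sc)\mu(\Sc')$.

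\textbf{Step 1: lower bounds on the counts.} Since $\Sc$ is nice with respect to $\Sc'$ at order $n'$, we have $m_{\Sc,\Sc'}(n')\le r^{\gamma/2}\mu(f^{-n'}(\Sc)\cap\Sc')$. Substituting this into Proposition \ref{prop:mixing-upper} gives
\[
b_{\Sc,\Sc'}(n')\,\frac{1}{d^{n'}}\,I\,I' \ \ge\ (1-r^{\gamma/2})\,\mu(f^{-n'}(\Sc)\cap\Sc'),
\]
where $I:=\int_{\Delta^{\u}(\Sc)}\T^+_{r}$ and $I':=\int_{\Delta^{\s}(\Sc')}\mathbf T^-_{r,-n'}$. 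The same argument at order $n-n'$ gives
\[
b_{\Sc',\Sc}(n-n')\,\frac{1}{d^{n-n'}}\,J\,J' \ \ge\ (1-r^{\gamma/2})\,\mu(f^{-(n-n')}(\Sc')\cap\Sc),
\]
where $J,J'$ are the analogous slice masses of $\T^+_{r,-n'}$ and $\mathbf T^-_{r}$.

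\textbf{Step 2: multiply and bound the slice masses.} Multiplying the two inequalities, it suffices to show
\[
I\,I'\,J\,J' \ \le\ (1+Cr^{\gamma})\,\mu(\Sc)\,\mu(\Sc').
\]
The slice masses are given by Lemma \ref{lemma:bls-mixed}: $\widehat\mu_r(\Sc)$ is essentially the product of the slice of $\T^+_r$ along a vertical plaque and the transverse mass of $\mathbf T^-$ in $\Sc$. Combined with $\widehat\mu_r\le\mu$ from Lemma \ref{lemma:mu-r-hat-mass}, this should give $I\cdot J'\lesssim\mu(\Sc)$ and $I'\cdot J\lesssim\mu(\Sc')$ after pairing the factors correctly. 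Any multiplicative discrepancy can be moved into $\varepsilon_{\Sc,\Sc'}(n)$. This step is where I expect the main bookkeeping obstacle: the currents at levels $0$ and $-n'$ have to be matched so that the stable slice in $\Sc$ pairs with the unstable transverse mass in $\Sc$, and likewise in $\Sc'$. If an exact product identity fails, I would define $\varepsilon_{\Sc,\Sc'}$ to absorb $|\mu(\Sc)-\widehat\mu_{r}(\Sc)|\mu(\Sc')$ and the symmetric term. Summing over pairs, these are controlled by $\|\mu-\widehat\mu_r\|\le Cr^\gamma$.

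\textbf{Step 3: exponential mixing.} We are left with bounding the product $\mu(f^{-n'}(\Sc)\cap\Sc')\,\mu(f^{-(n-n')}(\Sc')\cap\Sc)$ from below. By exponential mixing of $\mu$ for Hölder observables (Dinh's or Vigny's results for Hénon maps), applied to smooth cut-offs of $\Sc$ and $\Sc'$ at scale $r^{\gamma}\rho^{100}$, each factor equals $\mu(\Sc)\mu(\Sc')$ up to an error of order $\rho^{-M}\lambda^{n'}$ plus a corridor mass. The corridor masses sum to $Cr^\gamma$ by Lemma \ref{lemma:dujardin-fubini2}. Since $n'\approx n/2$ and $\rho=d^{-\alpha\kappa n}$ with $\alpha$ small, the quantity $\rho^{-M}\lambda^{n'}$ is exponentially smaller than $r^\gamma$ divided by the number of pairs of boxes (at most $\rho^{-O(1)}$).

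The resulting product is $\mu(\Sc)\mu(\Sc')\cdot(\mu(\Sc)\mu(\Sc')+\text{err})$. To get the exact form of the statement, with one factor $\mu(\Sc)\mu(\Sc')$ rather than its square, I would pair each mixing term with the slice-mass identities so that a single factor $\mu(\Sc)\mu(\Sc')$ cancels. The errors are put into $\varepsilon_{\Sc,\Sc'}(n)$, and summing them over pairs gives at most $Cr^\gamma$ provided $\alpha$ is chosen small compared with the mixing rate.
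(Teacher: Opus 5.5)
Your proposal is correct and is essentially the paper's proof: combine the two bounds of Proposition~\ref{prop:mixing-upper} with the niceness of the couple, bound the product $I\,I'\,J\,J'$ of the four slice masses by $\mu(\Sc)\mu(\Sc')$, insert the lower bound of Lemma~\ref{lemma:mixing-lower} (your Step~3 only re-derives it, so you can cite it), and use $\frac{(M-e)(M-e')}{M}\ge M-e-e'$ with $M=\mu(\Sc)\mu(\Sc')$ to obtain $\varepsilon_{\Sc,\Sc'}(n)=e_{\Sc,\Sc'}(n')+e_{\Sc',\Sc}(n-n')$. The one point to tidy is the slice bound, which does not come from $\widehat\mu_r$ (Lemma~\ref{lemma:bls-mixed} gives $\widehat\mu_r(\widehat\Ac)=d^{-n'}I\,I'$, a different product); it comes from the product structure of $\T^+_{r}\dot\wedge\mathbf T^-_{r}$ in $\Sc$ and of $\T^+_{r,-n'}\dot\wedge\mathbf T^-_{r,-n'}$ in $\Sc'$ together with domination by $\mu$, which yields $I\,J'\le\mu(\Sc)$ and $I'\,J\le\mu(\Sc')$ with your pairing of the factors.
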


Before proving the above proposition, we need a couple of intermediary lemmas.  

In order to apply the exponential mixing for Hölder continuous observables, one first needs a good cut-off function supported by the stable boxes.  Recall that $\Sc = \Psi\big(Q_{\rho^{60}} \times Q^\perp_{ (\rho^{100}/2)^{1 / \theta_-}}  \big)$ over which we defined vertical and horizontal corridors (see Definition \ref{def:stable-box}).  The following lemma can be proved using a standard cut-off function of class $\Cc^1$ over $Q_{\rho^{60}} \times Q^\perp_{ (\rho^{100}/2)^{1 / \theta_-}}$ and pushing it to $\Sc$ via $\Psi$. The estimates follow from the ones of the initial cut-off function together with the regularity estimates of $\Psi$ (see Proposition \ref{prop:holo-motion} and the paragraph that follows). We leave the details to the reader. 

\begin{lemma} \label{lemma:cut-off-function}
 Denote by $\mathring{\Sc}$ the set obtained from $\Sc$  by removing its corridors.  There exists a function $\varphi_\Sc$ on $\C^2$ of class $\Cc^{\theta_-}$ such that $0\leq \varphi_\Sc \leq 1$,  $\varphi_\Sc$ is supported on $\Sc$,   $\varphi_\Sc \equiv 1$ on $\mathring{\Sc}$ and $\|\varphi_\Sc\|_{\Cc^{\theta_-}} \leq 8 \rho^{10}$.
\end{lemma}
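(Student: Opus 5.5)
The plan is to build $\varphi_\Sc$ in two stages, exactly as the remark before the statement suggests. First we construct a cut-off function $\chi$ on the straightened box $Q_{\rho^{60}} \times Q^\perp_{(\rho^{100}/2)^{1/\theta_-}}$. Then we set $\varphi_\Sc := \chi \circ \Phi$ on the vertical strip $\mathcal T_{\rho^{26}}$ containing $\Sc$, extended by $0$ elsewhere, where $\Phi = \Psi^{-1}$ is the straightening homeomorphism from the paragraph following Proposition \ref{prop:holo-motion}.

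\textbf{Step 1: the product cut-off $\chi$.} Write $\chi(t,s) = \chi_1(t)\chi_2(s)$.
\begin{itemize}
\item $\chi_1$ is a $\Cc^1$ function on $D$ with $0 \le \chi_1 \le 1$, equal to $1$ on $r^\gamma Q_{\rho^{60}}$ and vanishing near $\partial Q_{\rho^{60}}$. Its transition is made across the vertical corridor, which has width $\asymp (1-r^\gamma)\rho^{60}$.
\item $\chi_2$ is the analogous function on $D^\perp$ relative to $Q^\perp_{(\rho^{100}/2)^{1/\theta_-}}$ and its corridor.
\end{itemize}
Since the corridors have width comparable to the side of the square (because $r^\gamma \ll 1$), one gets $\|\nabla \chi_1\|_\infty \lesssim \rho^{-60}$ and $\|\nabla\chi_2\|_\infty \lesssim \rho^{-100/\theta_-}$. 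Interpolating between the sup bound $1$ and these Lipschitz bounds gives the $\Cc^{\theta_-}$ seminorm of $\chi$ on the box.

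\textbf{Step 2: properties of $\varphi_\Sc$ that are immediate.} $\Phi$ maps $\Sc$ onto the straight box, and it maps the horizontal and vertical corridors of $\Sc$ onto the straight corridors; this is how the corridors were defined in Definition \ref{def:stable-box}. Hence:
\begin{itemize}
\item $0 \le \varphi_\Sc \le 1$;
\item $\varphi_\Sc$ is supported in $\Sc$;
\item $\varphi_\Sc \equiv 1$ on $\mathring\Sc$.
\end{itemize}
Continuity across $\partial \Sc$ holds because $\chi$ vanishes near the boundary of the straight box.

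\textbf{Step 3: the Hölder estimate.} For $y, y' \in \Sc$ we estimate
$$|\varphi_\Sc(y) - \varphi_\Sc(y')| \le \Lip(\chi)\,|\Phi(y) - \Phi(y')|.$$
The Hölder continuity of $\Phi$ comes from Proposition \ref{prop:holo-motion}.
\begin{itemize}
\item In the $t$-direction $\Phi$ is the identity.
\item In the fibre direction, $\Phi(z,w) = (z, h_z(w))$ with $h_z$ holomorphic, and the two-sided bounds $\tfrac12|a-b|^{\theta_+} \le |\varphi_a - \varphi_b| \le 2|a-b|^{\theta_-}$ invert to give the Hölder bound for $h_z$.
\end{itemize}
The choice of side $(\rho^{100}/2)^{1/\theta_-}$ for the vertical squares is made precisely so that these factors of $2$ and the exponent $\theta_-$ cancel against the scale of $\chi_2$. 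For pairs with $|y - y'|$ larger than the box scale, the trivial bound $|\varphi_\Sc(y) - \varphi_\Sc(y')| \le 1$ is used instead. Collecting the powers of $\rho$ and the numerical constants should give the announced bound $\|\varphi_\Sc\|_{\Cc^{\theta_-}} \le 8\rho^{10}$, with the $8$ accounting for the factors $2$ from Proposition \ref{prop:holo-motion} and from $\theta_\pm \in (1/2, 2)$.

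\textbf{Main obstacle.} The main difficulty is this exponent bookkeeping in Step 3: matching the precise power $\rho^{10}$ and the constant $8$ in the statement. Two points need checking.
\begin{itemize}
\item The Hölder constant of $\Phi$ must be uniform in $z \in Q_{\rho^{60}}$. The Harnack argument behind Proposition \ref{prop:holo-motion} only gives this on the inner disc $Q_{\rho^{26}}$ of the $N \times N$ grid, which contains $Q_{\rho^{60}}$.
\item The norm convention for $\|\cdot\|_{\Cc^{\theta_-}}$ must be the one under which the bound is used later in the exponential-mixing estimate. I would fix it by following the normalization of \cite{ddk:periodic-Pk}.
\end{itemize}
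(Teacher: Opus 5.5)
Your construction is the one the paper indicates: a $\Cc^1$ product cut-off $\chi$ on $Q_{\rho^{60}}\times Q^\perp_{(\rho^{100}/2)^{1/\theta_-}}$, composed with $\Phi=\Psi^{-1}$ and extended by zero. Steps 1--2 are fine.

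\textbf{The bound you aim for is unattainable.} The gap is at the end of Step 3. No bookkeeping can give $\|\varphi_\Sc\|_{\Cc^{\theta_-}}\le 8\rho^{10}$, because no function with the other listed properties satisfies it.
\begin{enumerate}
\item Already $\|\varphi_\Sc\|_{\Cc^{\theta_-}}\ge\sup|\varphi_\Sc|=1$.
\item Every $y\in\mathring{\Sc}$ lies at horizontal distance at most $\rho^{60}$ from some $y'\notin\pi_D^{-1}(Q_{\rho^{60}})$, where $\varphi_\Sc=0$. So the H\"older seminorm is at least $\rho^{-60\theta_-}\ge\rho^{-30}$, which excludes even $8\rho^{-10}$.
\end{enumerate}
Your own Step 1 shows this. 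You have $\Lip(\chi_2)\gtrsim\rho^{-100/\theta_-}$, while for fixed $t$ the fibre map $h_t$ is H\"older with constant only $2^{\theta_-}$ (invert $\frac12|a-b|^{\theta_+}\le|\varphi_a(t)-\varphi_b(t)|$, using $\theta_+=1/\theta_-$). So nothing cancels.

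The side $s=(\rho^{100}/2)^{1/\theta_-}$ is not there to cancel $\Lip(\chi_2)$. It is chosen so that $2s^{\theta_-}=\rho^{100}$: by the upper bound $|\varphi_a-\varphi_b|\le 2|a-b|^{\theta_-}$, the fibres of $\Sc$ then have size about $\rho^{100}$, as used in the proof of Lemma \ref{lemma:support-corridor}.

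What the construction (and the paper's sketch) actually yields is $\|\varphi_\Sc\|_{\Cc^{\theta_-}}\le C\rho^{-M}$ for some fixed $M$ independent of $n$ and $\alpha$. You should prove that instead, and check that it suffices where the lemma is used. In Lemma \ref{lemma:mixing-lower} the error term becomes $d^{-n'\theta_-^2/8}\rho^{-2M-600}$. This is still at most $r^\gamma$ once $\alpha$ is small in terms of $M$ and $\kappa$, because $\rho=d^{-\alpha\kappa n}$ and $n'$ is comparable to $n$.

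\textbf{Two further points are needed to get $C\rho^{-M}$.}

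First, ``$\Phi$ is the identity in the $t$-direction'' does not give joint H\"older continuity. We have $\Phi(t,w)=(t,h_t(w))$, and moving $t$ with $w$ fixed changes the leaf, so you need the slopes of the leaves. If $a=h_t(w)$ and $a'=h_{t'}(w')$, then
$\frac12|a-a'|^{\theta_+}\le|\varphi_a(t')-\varphi_{a'}(t')|\le\Lip(\varphi_a)|t-t'|+|w-w'|$.
With the slope bound $\rho^{-10}$ of (P9) this gives $|h_t(w)-h_{t'}(w')|\le(4\rho^{-10}|y-y'|)^{\theta_-}$. The leaves added by Slodkowski's extension need an analogous polynomial slope bound. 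This factor of order $\rho^{-10}$, presumably the source of the $\rho^{\pm 10}$ in the statement, must then be multiplied by $\Lip(\chi)$.

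Second, the corridors should be $Q\setminus(1-r^\gamma)Q$, of width about $r^\gamma$ times the side. The ``$r^\gamma Q_{\rho^{60}}$'' in Definition \ref{def:stable-box} is a misprint: the proof of Lemma \ref{lemma:support-corridor} uses width $r^\gamma\rho^{60}$. With your reading the corridors would fill most of each box, which is incompatible with Lemma \ref{lemma:dujardin-fubini2}. Hence $\chi$ must drop from $1$ to $0$ over a width of order $r^\gamma$ times the side, and $\Lip(\chi)$ picks up an extra factor $r^{-\gamma}=\rho^{-\gamma/\kappa}$. This is harmless for a bound $C\rho^{-M}$, but it contradicts your assertion that the corridor width is comparable to the side.
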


The following result is a consequence of the exponential mixing for Hölder observables due to the second named author \cite{dinh:mixing}.

\begin{lemma} \label{lemma:mixing-lower}
There exists non-negative numbers $e_{\Sc,\Sc'}(n') $  depending on $\Sc$ and $\Sc'$ such that 
$$\mu(f^{-n'}(\Sc) \cap \Sc') \geq \mu(\Sc) \mu(\Sc') - e_{\Sc,\Sc'}(n')$$
and  $\sum_{\Sc,\Sc'} e_{\Sc,\Sc'}(n')  \leq C r^\gamma$.
\end{lemma}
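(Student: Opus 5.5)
The plan is to approximate the indicator functions of $\Sc$ and $\Sc'$ from below by the Hölder cut-offs of Lemma \ref{lemma:cut-off-function}, apply the exponential mixing of \cite{dinh:mixing} to these cut-offs, and absorb the loss into corridor terms controlled by Lemma \ref{lemma:dujardin-fubini2}. Write $\varphi_\Sc$ and $\varphi_{\Sc'}$ for these cut-offs. They satisfy $0\le \varphi_\Sc\le \mathbf 1_\Sc$, $\varphi_\Sc\equiv 1$ on $\mathring{\Sc}$, and $\|\varphi_\Sc\|_{\Cc^{\theta_-}}\le 8\rho^{-10}$. The statement reads $\rho^{10}$, but the bound must blow up with $\rho$, so I take $\rho^{-10}$, or more generally some fixed negative power of $\rho$. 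Since $\mathbf 1_{\Sc}\ge \varphi_{\Sc}$, we get
$$\mu(f^{-n'}(\Sc)\cap\Sc')=\langle \mu, (\mathbf 1_\Sc\circ f^{n'})\,\mathbf 1_{\Sc'}\rangle \ge \langle \mu,(\varphi_\Sc\circ f^{n'})\,\varphi_{\Sc'}\rangle .$$
Exponential mixing for $\Cc^{\theta_-}$ observables gives a constant $0<\lambda<1$, depending only on $f$ and on $\theta_-$ (which lies in $(1/2,1)$, so we may use a uniform version for exponents in that range), such that
$$\big|\langle \mu,(\varphi_\Sc\circ f^{n'})\varphi_{\Sc'}\rangle-\langle\mu,\varphi_\Sc\rangle\langle\mu,\varphi_{\Sc'}\rangle\big|\le C\lambda^{n'}\|\varphi_\Sc\|_{\Cc^{\theta_-}}\|\varphi_{\Sc'}\|_{\Cc^{\theta_-}}\le C'\lambda^{n'}\rho^{-20}.$$

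Next I compare $\langle\mu,\varphi_\Sc\rangle$ with $\mu(\Sc)$. Because $\mathbf 1_\Sc-\varphi_\Sc$ is supported on the corridors of $\Sc$, we have $\langle\mu,\varphi_\Sc\rangle\ge \mu(\Sc)-\mu(\Sc\cap\Cc)$. Using $\mu(\Sc')\le 1$ and $\langle\mu,\varphi_\Sc\rangle\le 1$, this yields
$$\langle\mu,\varphi_\Sc\rangle\langle\mu,\varphi_{\Sc'}\rangle\ge \mu(\Sc)\mu(\Sc')-\mu(\Sc\cap\Cc)\mu(\Sc')-\mu(\Sc)\mu(\Sc'\cap\Cc).$$
So I define
$$e_{\Sc,\Sc'}(n'):=\mu(\Sc\cap\Cc)\mu(\Sc')+\mu(\Sc)\mu(\Sc'\cap\Cc)+C'\lambda^{n'}\rho^{-20},$$
which gives the lower bound, and then I sum over pairs. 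The stable boxes form a partition of $K$ and $\mu$ is a probability measure, so the first two terms sum to at most $2\mu(\Cc)\le 2Cr^\gamma$ by Lemma \ref{lemma:dujardin-fubini2}.

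The main obstacle is the third term after summation, because the number of pairs is large. The number of stable boxes meeting $K$ is at most a fixed negative power of $\rho$, roughly $\rho^{-2\cdot 60-2\cdot 100/\theta_-}\le\rho^{-520}$. Each box is the image under $\Psi$ of a product of squares, with the vertical side of size $(\rho^{100}/2)^{1/\theta_-}$ and $\theta_->1/2$. Squaring for pairs, the total error is at most $C\rho^{-1100}\lambda^{n'}$. Now $\rho=d^{-\alpha\kappa n}$ and $n'\ge \ell p/2\ge n/3$ for $\alpha$ small and $n$ large, so
$$C\rho^{-1100}\lambda^{n'}\le C d^{1100\alpha\kappa n}\lambda^{n/3},$$
which is exponentially small once $\alpha$ is chosen small relative to $\log(1/\lambda)/\kappa$. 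This is exactly where the smallness of $\alpha$ is needed. Since $r^\gamma=d^{-\alpha\gamma n}$ decays more slowly for small $\alpha$, the exponential term is at most $r^\gamma$ for $n$ large, and $\sum_{\Sc,\Sc'}e_{\Sc,\Sc'}(n')\le Cr^\gamma$ follows. The same argument applies verbatim with $n-n'$ in place of $n'$.
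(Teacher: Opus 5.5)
Your proposal is correct and follows the paper's route: lower cut-offs from Lemma \ref{lemma:cut-off-function}, Dinh's exponential mixing for $\Cc^{\theta_-}$ observables, corridor losses summed via Lemma \ref{lemma:dujardin-fubini2}, and the mixing error summed over pairs of boxes (polynomially many in $\rho^{-1}$) and absorbed because $\alpha$ is small. Your adjustments are all correct and only change harmless fixed exponents: reading the Hölder bound as a negative power of $\rho$, using $n'\geq n/3$ (the paper's $n'\geq 3n/4$ is too optimistic, since $n'\approx n/2$), and counting boxes using the real two-dimensionality of the squares.
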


\begin{proof}
Let $\varphi_\Sc$  and $\varphi_{\Sc'}$  be the cut-off functions associated with $\Sc$  and $\Sc'$  respectively given by Lemma \ref{lemma:cut-off-function}. Then, by the lemma and the main theorem in \cite{dinh:mixing}, we get
\begin{align*}
\mu(f^{-n'}(\Sc) \cap \Sc')  &\geq \int (\varphi_\Sc \circ f^{n'}) \,  \varphi_{\Sc'} \, d \mu \\ &\geq \Big( \int \varphi_\Sc \, d \mu \Big)  \Big( \int \varphi_{\Sc'} \, d \mu \Big) - c d^{-n' \theta_-^2 / 8 } \|\varphi_\Sc\|_{\Cc^{\theta_-}} \, \|\varphi_{\Sc'}\|_{\Cc^{\theta_-}}
\end{align*}

Denote by  $\Cc_\Sc:= \Sc \setminus \mathring{\Sc}$ the corridors inside $\Sc$ and similarly for $\Sc'$. Then $\int \varphi_\Sc \, d \mu  \geq \mu(\mathring{\Sc}) =   \mu( \Sc) -  \mu(\Cc_\Sc)$ and and similarly for $\Sc'$. Hence, the last quantity above is bounded from below by
\begin{align*}
 \big( \mu(\Sc) -  \mu(\Cc_\Sc) \big) \big( \mu(\Sc') -  \mu(\Cc_{\Sc'}) \big) - c d^{-n' \theta_-^2 / 8 } \|\varphi_\Sc\|_{\Cc^{\theta_-}} \, \|\varphi_{\Sc'}\|_{\Cc^{\theta_-}}
\end{align*}

Expanding the above product and setting
$$e_{\Sc,\Sc'}(n') : = \mu(\Cc_\Sc) \mu(\Sc') +  \mu(\Cc_{\Sc'}) \mu(\Sc) + c d^{-n' \theta_-^2 / 8 } \|\varphi_\Sc\|_{\Cc^{\theta_-}} \, \|\varphi_{\Sc'}\|_{\Cc^{\theta_-}}  $$
we get that
$$\mu(f^{-n'}(\Sc) \cap \Sc') \geq \mu(\Sc) \mu(\Sc') - e_{\Sc,\Sc'}(n').$$

It remains to bound the sum of $e_{\Sc,\Sc'}(n')$ over the pairs $(\Sc,\Sc')$. For the last term in the definition of $e_{\Sc,\Sc'}(n')$, notice that, by the fact that $K$ has finite area,  the number of stable boxes intersecting $K$ is bounded by a constant times $\rho^{-60} (\rho^{-100}/2)^{-1/\theta_-}$ which is smaller than $\rho^{-300}$ because $\theta_- > 1/2$. This fact, together  with Lemma \ref{lemma:cut-off-function} yield
 $$ \sum_{\Sc,\Sc'} c \, d^{-n' \theta_-^2 / 8 } \|\varphi_\Sc\|_{\Cc^{\theta_-}} \, \|\varphi_{\Sc'}\|_{\Cc^{\theta_-}}  \leq  c' \, d^{-n'  / 32 } \rho^{-20} \rho^{-600} \leq c' \,  d^{- 3n / 128} d^{620 \alpha \kappa n} \leq c' \,  d^{-\alpha n \gamma} = c' \, r^\gamma$$
for some constant $c' >0$, where  we have used that $\theta_- \geq \frac12$,  $n$ is large,  $\alpha$ is small and $
\alpha\ell\geq \frac18$ as noted before, so $n' \geq 6 \alpha n \frac{\ell}{2} \geq  \frac{3}{4} n.$

For the remaining term, we have
\begin{align*}
 \sum_{\Sc,\Sc'} \Big(  \mu(\Cc_\Sc) \mu(\Sc') +  \mu(\Cc_{\Sc'}) \mu(\Sc) \Big) &= \sum_\Sc \mu(\Cc_\Sc) \sum_{\Sc'} \mu(\Sc') +    \sum_{\Sc'}  \mu(\Cc_{\Sc'})  \sum_\Sc  \mu(\Sc) \\ &= 2 \sum_\Sc \mu(\Cc_\Sc) \sum_\Sc \mu(\Sc),
\end{align*}
but  $\sum_\Sc  \mu(\Sc)$ is smaller than one because $\mu$ is of mass one, and $\sum_{\Sc}  \mu(\Cc_\Sc)$ is the mass of the union of all corridors intersecting $K$, which is smaller than $C r^\gamma$ by Lemma \ref{lemma:dujardin-fubini2}. This finishes the proof of the lemma.
\end{proof}

\begin{remark}
The above lemma is an example of application of the exponential mixing where one really needs to work with Hölder continuous test functions, the $\Cc^1$ observables not being enough.
\end{remark}

We can now prove the proposition. 

\begin{proof}[Proof of Proposition 5.11]
Set
$$c_{\Sc,\Sc'}(n'):= \frac{\mu(f^{-n'}(\Sc) \cap \Sc') - m_{\Sc,\Sc'}(n')}{ \int_{\Delta^{\u}(\Sc)}  \T^+_{r}  \cdot \int_{\Delta^{\s}(\Sc')}  \mathbf T^-_{r,-n'} }$$
and
$$c_{\Sc',\Sc'}(n-n'):= \frac{\mu(f^{-n+n'}(\Sc') \cap \Sc) - m_{\Sc',\Sc}(n')}{ \int_{\Delta^{\u}(\Sc')}  \T^+_{r,-n'}  \cdot \int_{\Delta^{\s}(\Sc)}  \mathbf T^-_{r}}.$$
Proposition \ref{prop:mixing-upper}, yields
\begin{equation*}
b_{\Sc,\Sc'}(n')  b_{\Sc',\Sc}(n - n') \geq d^n \, c_{\Sc,\Sc'}(n') \, c_{\Sc',\Sc}(n-n').
\end{equation*}

We have that  $\int_{\Delta^{\u}(\Sc)}  \T^+_{r}  \cdot \int_{\Delta^{\s}(\Sc')}  \mathbf T^-_{r,-n'} \leq \mu(\Sc)$ and $ \int_{\Delta^{\u}(\Sc')}  \T^+_{r,-n'}  \cdot \int_{\Delta^{\s}(\Sc)}  \mathbf T^-_{r} \leq \mu(\Sc')$. Using the assumption that the couple $(\Sc,\Sc')$ is good, and Lemma \ref{lemma:mixing-lower}, we obtain
\begin{align*}
& c_{\Sc,\Sc'}(n') \,  c_{\Sc',\Sc}(n-n')  \geq (1-r^{\gamma / 2})^2  \frac{\mu(f^{-n'}(\Sc) \cap \Sc') \cdot \mu(f^{-n+n'}(\Sc') \cap \Sc)}{\mu(\Sc) \mu(\Sc')} 
\\ & \geq (1-r^{\gamma / 2})^2  \frac{\big( \mu(\Sc) \mu(\Sc')- e_{\Sc,\Sc'}(n') \big)\cdot \big( \mu(\Sc) \mu(\Sc') - e_{\Sc',\Sc}(n-n')\big) }{\mu(\Sc) \mu(\Sc')}
\\ &=  (1-r^{\gamma / 2})^2  \bigg[ \mu(\Sc) \mu(\Sc')- e_{\Sc,\Sc'}(n') - e_{\Sc',\Sc}(n-n') + \frac{e_{\Sc',\Sc}(n-n')\big) }{\mu(\Sc) \mu(\Sc')} \bigg]
\\ & \geq  (1-r^{\gamma / 2})^2  \big[ \mu(\Sc) \mu(\Sc')- e_{\Sc,\Sc'}(n') - e_{\Sc',\Sc}(n-n') \big].
\end{align*}
and  $\sum_{\Sc,\Sc'} \varepsilon_{\Sc,\Sc'} (n) \leq C r^\gamma$. 
The proposition follows by setting $ \varepsilon_{\Sc,\Sc'} (n):= e_{\Sc,\Sc'}(n') + e_{\Sc',\Sc}(n-n')$.
\end{proof}

The following notions are also adapted versions of the ones in \cite{ddk:periodic-Pk}  to our setting.

\begin{definition}
Let $\varepsilon_{\Sc,\Sc'} (n)$ be as in the statement of Proposition \ref{prop:good-couple}. We say that a stable box is \textit{non-admissible} if $$\sum_{\substack{\Sc' \\ (\Sc,\Sc') \text{ is a good couple}}} \varepsilon_{\Sc,\Sc'} (n) \geq r^{\gamma /2} \mu(\Sc).$$
Otherwise,  we say that $\Sc$ is \textit{admissible}. 

We say that $\Sc$  is \textit{safe} if it is admissible and $$\sum_{\substack{\Sc' \\ (\Sc,\Sc') \text{ is a good couple}}} \mu(\Sc') \geq 1 - r^{\gamma/4}.$$
\end{definition}

Observe that Proposition \ref{prop:good-couple} gives
\begin{equation} \label{eq:non-admissible}
\sum_{\substack{\Sc \\ \Sc \text{ is non-admissible}}} \mu(\Sc) \leq r^{-\gamma / 2} \sum_{\Sc,\Sc'} \varepsilon_{\Sc,\Sc'} (n) \leq C r^{\gamma / 2}.
\end{equation}

In particular,  this will allows  to discard all non-admissible cells in the proof of Theorem \ref{thm:periodic-points}.  The following proposition shows that the unsafe cells can also be discarded.

\begin{proposition} \label{prop:unsafe}
In the above notation we have that
$$\sum_{\substack{\Sc \\ \Sc \text{ is not safe}}} \mu(\Sc) \leq C r^{\gamma /4}.$$
\end{proposition}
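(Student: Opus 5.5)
A stable box fails to be safe either because it is non-admissible or because it is admissible but the total $\mu$-mass of its good partners $\Sc'$ is below $1-r^{\gamma/4}$. The non-admissible boxes carry total mass at most $Cr^{\gamma/2}\le Cr^{\gamma/4}$ by \eqref{eq:non-admissible}. It therefore suffices to bound $\sum \mu(\Sc)$ over the boxes $\Sc$ with $\sum_{\Sc' : (\Sc,\Sc') \text{ not good}} \mu(\Sc') > r^{\gamma/4} - \mu(\Sc'\text{'s outside }K)$. Since $\mu$ is a probability measure supported in $K$, the total mass of the boxes is $1$, and the condition simply says that the bad partners of $\Sc$ carry mass larger than $r^{\gamma/4}$. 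Summing over such $\Sc$ gives
$$r^{\gamma/4}\sum_{\Sc \text{ unsafe, admissible}}\mu(\Sc) \le \sum_{(\Sc,\Sc')\ \text{not good}} \mu(\Sc)\mu(\Sc').$$
So everything reduces to showing that the $\mu\otimes\mu$-mass of non-good couples is $O(r^{\gamma/2})$.

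A couple fails to be good when $\Sc$ is not nice with respect to $\Sc'$ at order $n'$, or at order $n-n'$. For non-nice couples at order $n'$, Definition \ref{def:nice} gives $\mu(f^{-n'}(\Sc)\cap\Sc') < r^{-\gamma/2} m_{\Sc,\Sc'}(n')$. I would combine this with the mixing lower bound of Lemma \ref{lemma:mixing-lower}, $\mu(\Sc)\mu(\Sc') \le \mu(f^{-n'}(\Sc)\cap \Sc') + e_{\Sc,\Sc'}(n')$. Summing over non-nice couples then yields
$$\sum_{\text{not nice at } n'}\mu(\Sc)\mu(\Sc') \le r^{-\gamma/2}\sum_{\Sc,\Sc'} m_{\Sc,\Sc'}(n') + \sum_{\Sc,\Sc'} e_{\Sc,\Sc'}(n') \le C r^{\gamma/2} + Cr^\gamma,$$
using Proposition \ref{prop:mixing-upper} and Lemma \ref{lemma:mixing-lower}. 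The same argument applies at order $n-n'$, with the roles of $\Sc,\Sc'$ permuted, via the last sentence of Proposition \ref{prop:mixing-upper} and the analogue of Lemma \ref{lemma:mixing-lower} for $n-n'$; this analogue holds because $n-n'$ is also comparable to $n$, so the mixing error is still dominated by $r^\gamma$. Adding the two bounds, the non-good couples have $\mu\otimes\mu$-mass at most $Cr^{\gamma/2}$.

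Dividing by $r^{\gamma/4}$ bounds the admissible unsafe boxes by $Cr^{\gamma/4}$, and adding the non-admissible contribution finishes the argument. The only delicate point is the second-order case: I must check that Lemma \ref{lemma:mixing-lower} genuinely holds with $n'$ replaced by $n-n'$ and the boxes interchanged. That requires the mixing rate $d^{-(n-n')\theta_-^2/8}$ to beat the factor $\rho^{-620}$. Since $n-n'\gtrsim n/2 - O(\alpha n)$ and $\alpha$ is small, the same computation as in the proof of that lemma goes through; the rest is bookkeeping.
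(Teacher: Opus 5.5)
Your proof is correct and follows the paper's argument essentially step for step. You discard the non-admissible boxes via \eqref{eq:non-admissible}, use that the boxes carry total mass one to get $r^{\gamma/4}\mu(\Sc)\le\sum_{(\Sc,\Sc')\text{ not good}}\mu(\Sc)\mu(\Sc')$, and bound the $\mu\otimes\mu$-mass of the non-good couples by splitting them into non-nice couples at orders $n'$ and $n-n'$, each handled by combining Definition~\ref{def:nice} with Lemma~\ref{lemma:mixing-lower} and Proposition~\ref{prop:mixing-upper}. The paper likewise disposes of the order $n-n'$ case by ``a parallel argument'', implicitly using the same analogue of Lemma~\ref{lemma:mixing-lower} that you rightly verify via $n-n'\approx n/2$.
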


\begin{proof}
Thanks to \eqref{eq:non-admissible} it is enough to bound
$$\sum_{\substack{\Sc \text{admissible}\\ \Sc \text{ is not safe}}} \mu(\Sc).$$

Let $\Sc$ be admissible and not safe. We can write
$$ \mu(\Sc) = \sum_{\substack{\Sc' \\ (\Sc,\Sc') \text{ good}}}  \mu(\Sc)  \mu(\Sc') +  \sum_{\substack{\Sc' \\ (\Sc,\Sc') \text{ not good}}}  \mu(\Sc)  \mu(\Sc').$$

Since $\Sc$ is not safe, the first sum above is bounded from above by $(1-r^{\gamma / 4}) \mu(\Sc)$. It follows that
$$r^{\gamma/4}  \mu(\Sc)  \leq \sum_{\substack{\Sc' \\ (\Sc,\Sc') \text{ not good}}}  \mu(\Sc)  \mu(\Sc'), \quad \text{so} \quad  \mu(\Sc)  \leq r^{-\gamma / 4}  \sum_{\substack{\Sc' \\ (\Sc,\Sc') \text{ not good}}}  \mu(\Sc)  \mu(\Sc').$$

It follows that
$$\sum_{\substack{\Sc \text{admissible}\\ \Sc \text{ is not safe}}} \mu(\Sc) \leq r^{-\gamma /4} \sum_{\Sc} \sum_{\substack{\Sc' \\ (\Sc,\Sc') \text{ not good}}}  \mu(\Sc)  \mu(\Sc')$$
and we are lead to show that

\begin{equation} \label{eq:sum-not-good}
 \sum_{\Sc} \sum_{\substack{\Sc' \\ (\Sc,\Sc') \text{ not good}}}  \mu(\Sc)  \mu(\Sc') \leq C r^{\gamma / 2}.
\end{equation}

In order to show that, denote by   $\mathrm{BC}$ the set of  bad couples. Then, the above sum equals

\begin{align*}
\mu \otimes \mu \bigg( \bigcup_{(\Sc, \Sc') \in \mathrm{BC}}  \Sc \times \Sc' \bigg) & \leq \mu \otimes \mu \bigg( \bigcup_{(\Sc, \Sc') \in \mathrm{BC}_{n'}}  \Sc \times \Sc' \bigg) + \mu \otimes \mu \bigg( \bigcup_{(\Sc, \Sc') \in \mathrm{BC}_{n'-n}}  \Sc \times \Sc' \bigg)
\end{align*}
where we define $\mathrm{BC}_{n'}$ as the set of couples $(\Sc, \Sc')$ where $\Sc$ is not nice with $\Sc'$ at order $n'$, and similarly for $\mathrm{BC}_{n'-n}$, as in  Definition \ref{def:nice}.

If $\Sc$ is not nice with $\Sc'$ at order $n'$ we have  $m_{\Sc,\Sc'}(n') \geq r^{\gamma / 2} \mu(f^{-n'}(\Sc) \cap \Sc')$ by definition. Together with Lemma \ref{lemma:mixing-lower}, we obtain
$$m_{\Sc,\Sc'}(n') \geq r^{\gamma / 2} \big(\mu(\Sc) \mu(\Sc') - e_{\Sc,\Sc'}(n') \big),$$
so
$$ r^{-\gamma / 2} m_{\Sc,\Sc'}(n') +  e_{\Sc,\Sc'}(n')  \geq  \mu(\Sc) \mu(\Sc').$$
A parallel argument applies to $m_{\Sc',\Sc}(n-n')$.
Therefore,
\begin{align*}
\mu \otimes \mu \bigg( \bigcup_{(\Sc, \Sc') \in \mathrm{BC}}  \Sc \times \Sc' \bigg) \leq \sum_{\Sc, \Sc'} \big( r^{-\gamma / 2} m_{\Sc,\Sc'}(n') +  e_{\Sc,\Sc'}(n')  \big) \\ + \sum_{\Sc', \Sc} \big( r^{-\gamma / 2} m_{\Sc',\Sc}(n-n')  +  e_{\Sc',\Sc}(n - n')  \big)
\end{align*}

By Proposition \ref{prop:mixing-upper} and Lemma \ref{lemma:mixing-lower}, we obtain \eqref{eq:sum-not-good}. This finishes the proof of the proposition.
\end{proof}

Once the unsafe cells are discarded,  the last step is to show that each safe cells contains a good number of periodic points with an exponential error.  This is the content of the next result.

\begin{proposition} \label{prop:safe}
Let $\Sc$ be a fixed safe cell.  Denote by ${\rm SPer_n}(\Sc)$ the set of saddle periodic points of order $n$ belonging to $\Sc$. Then $$\# {\rm SPer_n}(\Sc) \geq (1 - r^{\gamma / 4}) d^n \mu(\Sc)$$
\end{proposition}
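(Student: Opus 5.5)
The plan is to produce one saddle periodic point of period $n$ for each ``closed loop'' made of a good component of $f^{-n'}(\Sc')\cap\Sc$ followed by a good component of $f^{-(n-n')}(\Sc)\cap\Sc'$, summed over the good couples $(\Sc,\Sc')$. Then we count these loops using Proposition \ref{prop:good-couple} and the definition of a safe cell.

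\emph{Step 1: producing the periodic points.} Fix a good couple $(\Sc,\Sc')$. Take a good component $\widehat\Ac\in\mathcal A_{n'}(\Sc,\Sc')$ and a good component $\widehat\Ac'\in\mathcal A_{n-n'}(\Sc',\Sc)$. The composite $f^{-n}=f^{-(n-n')}\circ f^{-n'}$ restricted to the corresponding pieces is an honest return of a stable-like piece of $\Sc$ to $\Sc$. By Definition \ref{def:good-component-A}, a good component comes with a point of $\Bc^*$ and with the partial graph transforms $\Lambda'$, so the chain of Pesin frames and maps $g_j$ of Section \ref{sec:graph-transform} is available along both halves.

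We run the proof of Theorem \ref{thm:graph-transform}, but not in a single frame. We concatenate the frame chain at $x$, up to time $n'$, with the chain at the point of $\Bc^*$ in $\Sc'$, up to time $n-n'$. The coordinate change at the junction is handled exactly as in the Claim in the proof of Proposition \ref{prop:graph-transform}, using the stable box structure, Proposition \ref{prop:angle-holder} and Lemma \ref{lemma:single-point}. The size $\rho^{60}\times\rho^{100}$ of the boxes replaces the hypothesis $\dist(x,f^{-n}(x))<\rho^{70}$.

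This yields a graph $\Gamma\in\Gc_\Sc$ that is fixed by the induced graph transform. By part (2) of Theorem \ref{thm:graph-transform} it carries a fixed point $q$ of $f^n$. By Corollary \ref{cor:graph-transform}, $q$ is a saddle.

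\emph{Step 2: injectivity.} Distinct pairs $(\widehat\Ac,\widehat\Ac')$ give distinct points. The reason is that $q$ lies in $\widehat\Ac$ and $f^{-n'}(q)$ lies in the image of $\widehat\Ac'$, and good components are pairwise disjoint. The same loop cannot be counted for two different $\Sc'$, because $f^{-n'}(q)$ determines $\Sc'$. Hence
$$\#{\rm SPer}_n(\Sc)\ \ge\ \sum_{\Sc':\,(\Sc,\Sc')\text{ good}} b_{\Sc,\Sc'}(n')\,b_{\Sc',\Sc}(n-n').$$

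\emph{Step 3: counting.} Apply Proposition \ref{prop:good-couple} to each term and sum over $\Sc'$:
$$\#{\rm SPer}_n(\Sc)\ \ge\ (1-r^{\gamma/2})^2 d^n\Big(\mu(\Sc)\sum_{\text{good}}\mu(\Sc')-\sum_{\text{good}}\varepsilon_{\Sc,\Sc'}(n)\Big).$$
Since $\Sc$ is safe, $\sum_{\text{good}}\mu(\Sc')\ge 1-r^{\gamma/4}$. Since $\Sc$ is admissible, $\sum_{\text{good}}\varepsilon_{\Sc,\Sc'}(n)< r^{\gamma/2}\mu(\Sc)$. Therefore the bracket is at least $\mu(\Sc)\bigl(1-r^{\gamma/4}-r^{\gamma/2}\bigr)$. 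Multiplying by $(1-r^{\gamma/2})^2$ gives $d^n\mu(\Sc)$ times a factor $1-O(r^{\gamma/4})$. The target constant $1-r^{\gamma/4}$ follows after adjusting constants, i.e. replacing $r^{\gamma/4}$ by $Cr^{\gamma/4}$, or slightly shrinking the exponent using that $n$ is large.

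\emph{Main obstacle.} I expect Step 1 to be the hardest. The graph transform was proved only for a return of a single point $x$ within distance $\rho^{70}$, using one Pesin chain. Here the two halves come from different base points in $\Sc$ and $\Sc'$, so one has to check three things. First, stable-like graphs cut by $\Sc'$ stay in $\Gc_{\Sc'}$ after the partial transform $\Lambda'$. Second, re-expressing them in the frame at the point of $\Sc'$ costs only a factor of order $\rho^{-O(1)}$, which is absorbed by the contraction $(1/100)^{\ell+1}$. Third, the resulting fixed point genuinely lies in the given good components, which is where injectivity comes from. I would handle all three by the Lipschitz and intercept bookkeeping of Theorem \ref{thm:DN}, exactly as in Lemma \ref{lemma:g-Gamma} and Proposition \ref{prop:graph-transform}.
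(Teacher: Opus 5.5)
Your proposal is essentially the paper's proof. The paper also obtains $\#{\rm SPer}_n(\Sc)\ge\sum_{(\Sc,\Sc')\ \mathrm{good}}b_{\Sc,\Sc'}(n')\,b_{\Sc',\Sc}(n-n')$ by simply citing Theorem~\ref{thm:graph-transform} and Corollary~\ref{cor:graph-transform}, and then concludes with Proposition~\ref{prop:good-couple} and the safe/admissible conditions. Your Step~3 is more accurate than the printed text, which writes the admissibility bound with $\mu(\Sc')$ in place of $\varepsilon_{\Sc,\Sc'}(n)$ and claims the exact factor $1-r^{\gamma/4}$; the estimates only give $(1-r^{\gamma/2})^2(1-r^{\gamma/4}-r^{\gamma/2})\ge 1-4r^{\gamma/4}$, which is all Theorem~\ref{thm:periodic-points} needs.

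The paper gives nothing beyond that citation for Step~1, but do not lean on your remark that the box size replaces the hypothesis $\dist(x,f^{-n}(x))<\rho^{70}$. In the Claim of Proposition~\ref{prop:graph-transform}, the translation $W=C(x)^{-1}(f^{-n}(x)-x)$ is controlled only through $\|C(x)^{-1}\|\le 2\rho^{-6}$. Two points of the same stable box can be much farther apart than $\rho^{70}$, since its side along $D$ is already $\rho^{60}$. So the bound $\|W\|\le\rho^{60}/5$ that keeps $\Gc_\Sc$ invariant is lost, and your frame-concatenation junction needs a separate argument rather than a direct substitution.
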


\begin{proof}
Recall that $b_{\Sc,\Sc'}(n')$ is the number of  good components of $f^{-n'}(\Sc') \cap \Sc$ as in Definition \ref{def:good-component-A} and  $b_{\Sc',\Sc}(n-n')$ is the number of  good components of $f^{-n+n'}(\Sc') \cap \Sc$.

Let $\Sc$ be a safe cell.  The graph transform theorem and the closing lemma (see  Theorem \ref{thm:graph-transform} and Corollary \ref{cor:graph-transform}) give that 
$$\# {\rm SPer_n}(\Sc) \geq \sum_{\substack{\Sc' \\ (\Sc,\Sc') \text{ good}}}b_{\Sc,\Sc'}(n') \, b_{\Sc',\Sc}(n-n') $$
and by Proposition \ref{prop:good-couple}, this quantity is at least
$$\sum_{\substack{\Sc' \\ (\Sc,\Sc') \text{ good}}} b_{\Sc,\Sc'}(n')  \, b_{\Sc',\Sc}(n - n') \geq (1-r^{\gamma / 2})^2 d^n \big( \mu(\Sc) \mu(\Sc') - \varepsilon_{\Sc,\Sc'} (n)\big).$$

Since is safe (and in particular admissible), we have $$\sum_{\substack{\Sc' \\ (\Sc,\Sc') \text{ good}}} \mu(\Sc') \geq 1 - r^{\gamma/4} \quad \text{and} \sum_{\substack{\Sc' \\ (\Sc,\Sc')  \text{ good}}} \mu(\Sc') \leq r^{\gamma /2} \mu(\Sc).$$ 

Combining these estimates gives $\# {\rm SPer_n}(\Sc) \geq (1 - r^{\gamma / 4}) d^n \mu(\Sc)$ and proves the proposition.
\end{proof}

\begin{proof}[Proof of Theorem \ref{thm:periodic-points}]
In view of Propositions \ref{prop:unsafe} and \ref{prop:safe},  the proof proceeds, \textit{mutatis mutandis,} exactly as in Section 6 of \cite{ddk:periodic-Pk}.
\end{proof}

\bibliographystyle{alpha}
\bibliography{refs}

\end{document}